\newtheorem{theorem}{Theorem}
\newtheorem{proposition}[theorem]{Proposition}
\newtheorem{lemma}[theorem]{Lemma}
\newtheorem{corollary}[theorem]{Corollary}
\theoremstyle{definition}
\newtheorem{question}[theorem]{Question}
\newtheorem{example}[theorem]{Example}
\theoremstyle{remark}
\newtheorem{remark}[theorem]{Remark}
\numberwithin{equation}{section}
\numberwithin{theorem}{section}
\newcommand{\A}{\mathcal{A}}
\newcommand{\B}{\mathcal{B}}
\newcommand{\C}{\mathfrak{c}}
\newcommand{\D}{\mbox{\textnormal{dens}}}
\newcommand{\Dc}{\mbox{\textnormal{dec}}}
\newcommand{\F} {\mathbb{F}}
\newcommand{\G}{\mbox{\textnormal{gen}}}
\newcommand{\GG}{\mathcal{G}}
\newcommand{\h}{\mathfrak{H}}
\newcommand{\J}{\mathcal{J}}
\newcommand{\kH}{\mathfrak{K}}
\newcommand{\M}{\mathcal{M}}
\newcommand{\N}{\mathcal{N}}
\newcommand{\Q}{\mathbb{Q}}
\newcommand{\R}{\mathcal{R}}
\newcommand{\X}{\mathfrak{X}}
\newcommand{\z}{\mathcal{Z}}
\begin{document}

\title[Generators of von Neumann algebras]{On cardinal invariants and generators \\ for von Neumann algebras }
\author{David Sherman}
\address{Department of Mathematics\\ University of Virginia\\ P.O. Box 400137\\ Charlottesville, VA 22904, USA}
\email{dsherman@virginia.edu}

\subjclass[2000]{46L10}
\keywords{von Neumann algebra, cardinal invariant, generator problem, decomposability number, representation density}
\date{\today}

\begin{abstract}
We demonstrate how most common cardinal invariants associated to a von Neumann algebra $\M$ can be computed from the decomposability number, $\Dc(\M)$, and the minimal cardinality of a generating set, $\G(\M)$.  
Applications include the equivalence of the well-known generator problem, ``Is every separably-acting von Neumann algebra singly-generated?", with the formally stronger questions, ``Is every countably-generated von Neumann algebra singly-generated?" and ``Is the $\G$ invariant monotone?"  Modulo the generator problem, we determine the range of the invariant $(\G(\M), \Dc(\M))$, which is mostly governed by the inequality $\Dc(\M) \leq \C^{\text{gen}(\M)}$.
\end{abstract}

\maketitle

\section{Introduction}
In this paper we consider various ways of describing the size of a von Neumann algebra $\M$.  We show that most common cardinal invariants can be computed in terms of the \textit{minimal cardinality of a generating set}, $\G(\M)$, and the \textit{decomposability number}, $\Dc(\M)$.  For example, their product is the \textit{representation density}, $\chi_r(\M)$ (Theorem \ref{T:estimate}(2)).  (See the next section for definitions.)  With $\C$ the cardinality of the continuum, always $\Dc(\M) \leq \C^{\text{gen}(\M)}$ (Theorem \ref{T:estimate}(2)); this essentially determines the range of the invariant $(\G(\M), \Dc(\M))$ (Theorem \ref{T:range}). We give a formula for computing $\G$ of an arbitrary direct sum (Theorem \ref{T:dsum}) and deduce that the condition $\Dc(\M) > \aleph_0 \cdot \G(\M)$ can only hold when the center is large (Proposition \ref{T:center}(1)).  We also show that $\Dc(\M)$ and $\G(\M)$ determine the cardinality of $\M_*$, but not of $\M$, although the formula $|\M|= (\aleph_0 \cdot \G(\M))^{\aleph_0 \cdot \text{dec}(\M)}$ works as long as $\M$ can be written as a direct sum of algebras each of which can be generated by fewer than $(2^{\aleph_1})^{+\omega_1}$ elements (and this cardinal bound is sharp).

One of our underlying motivations is to give new formulations of the generator problem for von Neumann algebras, which we briefly describe now.

There are many criteria by which a von Neumann algebra may be considered ``small."  One is separability of the predual; this is equivalent to the existence of a faithful representation on $\ell^2$.  We will call such algebras ``separably-acting."  Another criterion for smallness is the presence of a countable generating set, or even better, the presence of a single generator.

\begin{question} \label{T:gen} (The generator problem)
Is every separably-acting von Neumann algebra singly-generated?
\end{question}

Every separably-acting von Neumann algebra is countably-generated, but the converse is not true.  For example, the atomic abelian von Neumann algebra $\ell^\infty_\C$ is generated by any single element whose components are all distinct, and its predual $\ell^1_\C$ is nonseparable.  Thus the following question is formally stronger.

\begin{question} \label{T:gen2}
Is every countably-generated von Neumann algebra singly-generated?
\end{question}

We will see that the two questions are actually equivalent (Theorem \ref{T:main}), so that either may be termed ``the generator problem."  We also show that Questions \ref{T:gen} and \ref{T:gen2} are equivalent to asking whether $\G$ is monotone (Theorem \ref{T:monotone}(3)) or multiplicative on tensor products (Corollary \ref{T:mult}(3)).  Unfortunately we offer little insight here into the answers to these questions, other than the fact that they are identical.  Over the years more and more classes of separably-acting von Neumann algebras have been shown to be singly-generated, including those that are type I (\cite{P}) or properly infinite (\cite[Theorem 2]{Wo}).  It is also known that a full positive answer would follow from a positive answer for $\text{II}_1$ factors (\cite[Corollary 2]{W}) -- here we add the possibly useful observation that one can restrict attention to finitely-generated $\text{II}_1$ factors (Theorem \ref{T:finite}).  On the other hand there has been feeling that free entropy and other tools from free probability might show that algebras such as $L(\F_3)$ are counterexamples.  For more on the current status of the generator problem for $\text{II}_1$ factors, the reader could consult \cite[Chapter 16]{SS2} or \cite{GS}.

The paper is structured as follows.  In the next section we establish a number of relations between invariants that measure the size of a von Neumann algebra.  In Section \ref{S:main} we prove that Questions \ref{T:gen} and \ref{T:gen2} are equivalent and use Shen's invariant $\GG(\cdot)$ to further reduce to the finitely-generated case.  Section \ref{S:sums} establishes the formula $\G(\sum^\oplus \M_i) = \max \{\log_\C (|I|), \sup \G(\M_i)\}$, then identifies (modulo the generator problem) the pairs of cardinals that arise as $(\G(\M), \Dc(\M))$.  In Section \ref{S:center} we consider what cardinal invariants can say about the center, or about the algebra modulo the center, and we generalize some results of Kehlet.  Section \ref{S:mm} proves that the generator problem is equivalent to monotonicity of $\G$, or multiplicativity of $\G$ on tensor products.  Section \ref{S:dd} comments on the invariants of double duals of $C^*$-algebras, and responds (not quite completely) to some questions of Hu and Neufang.  In the final section we investigate when and how $\G(\M)$ and $\Dc(\M)$ determine the cardinality of $\M$.

Owing to the quantity of invariants, it can be difficult even for experts to keep the interdependences straight.  A secondary goal of this paper is simply to collect and organize all the relevant information, including examples and some brief historical discussion.

None of the results in this paper rely on set theoretic assumptions beyond ZFC.

\section{Describing the size of a von Neumann algebra} \label{S:size}

Representations of von Neumann algebras are always understood here to be normal.  The symbol ``$\simeq$" stands either for *-isomorphism of von Neumann algebras or isometric isomorphism of Banach spaces.  The center of a von Neumann algebra $\M$ is $\z(\M)$, and in any direct sum $\sum^\oplus \M_i$ we let $\{e_i\}$ be the coordinate projections.

The cardinality of a set $S$ is $|S|$.  The \textit{density character} of a topological space is the minimal cardinality of a dense set, and the norm density character of a Banach space $\X$ will be denoted $\D(\X)$.  For a Hilbert space $\h$, we have $\D(\h) = \aleph_0 \cdot \text{dim}(\h)$: consider finite linear combinations of basis elements over $\Q + i\Q$.  We also write $s$-$\D$ for the density character of a von Neumann algebra $\M$ or its unit ball $\M_{\leq 1}$ with respect to the $\sigma$-strong topology.  The reader should be aware that in general (nonmetrizable) Hausdorff spaces the density character may increase when passing to a subspace, even a closed subgroup of a topological group (see \cite{CI} for examples and discussion).  It will turn out that this phenomenon does not occur in the situations considered in this paper.

Here are three cardinal invariants for a von Neumann algebra $\M$.
\begin{itemize}
\item $\G(\M)=$ minimal cardinality of a generating set.  
    By fiat we set $\G(\mathbb{C})=1$ instead of 0.
\item $\chi_r(\M)=$ minimal dimension of a Hilbert space on which $\M$ acts faithfully.  We take this notation and the name \textit{representation density} from \cite[Section 7]{FK}, where the $C^*$-version is briefly developed.  In Theorem \ref{T:estimate}(2) we show that $\chi_r(\M) = \D(\M_*)$ whenever $\M$ is infinite-dimensional, which generalizes the often-mentioned, rarely-proved fact that a von Neumann algebra is separably-acting if and only if it has separable predual (e.g., \cite[Lemma 1.8]{Y}).
\item $\Dc(\M)=$ maximal cardinality of a set of pairwise orthogonal nonzero projections in $\M$.  (That the supremum is achieved is proved in \cite[Theorem 2.6(i)]{HN}.)  This notation, for \textit{decomposability number}, is taken from the series of papers \cite{HN,Hu,N}, although the concept had appeared earlier in \cite[p.54]{AA}.  Of course it is motivated by the condition called either $\sigma$-finiteness or countable decomposability, which amounts to $\Dc(\M) \leq \aleph_0$.  
\end{itemize}

It is classical that a von Neumann algebra $\M$ acts faithfully on a separable Hilbert space if and only if it is both countably-generated and $\sigma$-finite (\cite[Exercice I.7.3bc]{D}).  In other words,
\begin{equation} \label{E:int}
\chi_r(\M) \leq \aleph_0 \iff [\G(\M) \leq \aleph_0 \text{ and } \Dc(\M) \leq \aleph_0].
\end{equation}
In Theorem \ref{T:estimate}(2) we will obtain the general statement $\chi_r(\M) = \G(\M) \cdot \Dc(\M)$.

One reason \eqref{E:int} is easy to misremember is that the analogous conditions for $C^*$-algebras interact in a totally different manner: countable generation is equivalent to separability (of the algebra), and this is strictly stronger than being representable on a separable Hilbert space.  Figure 1 is intended to help the reader visualize \eqref{E:int} and its relation to our treatment of the generator problem.  Most von Neumann algebras one encounters are in $C$, and we have already mentioned that the algebra $\ell^\infty_\C$ belongs to $B$.  We will describe several inhabitants of $E$ in Example \ref{T:LG}.  The usual generator problem (Question \ref{T:gen}) asks whether $D$ is empty, while Question \ref{T:gen2} asks whether $A$ and $D$ are both empty.

\setlength{\unitlength}{.5in}
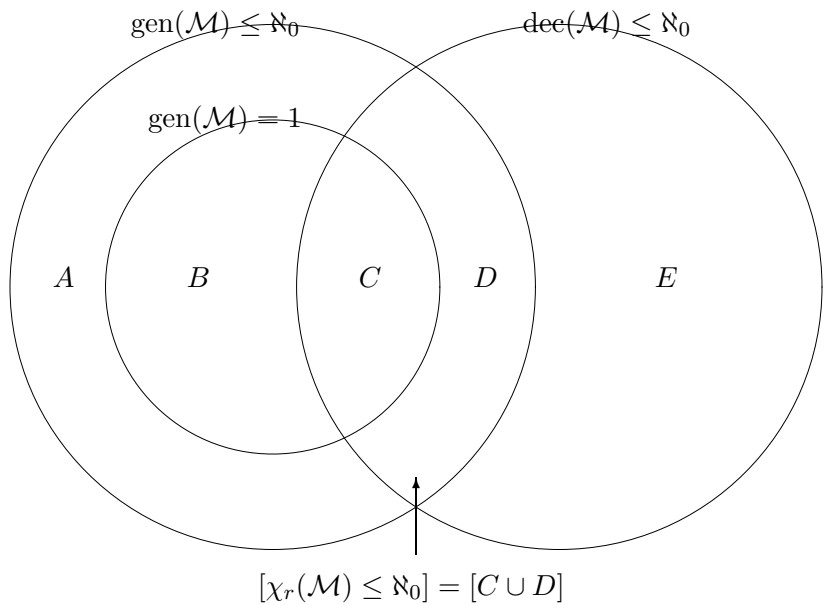
\begin{figure}[b] \label{F:venn}

\begin{picture}(12,6.1)
\put(4.5,3.3){\circle{5.5}}
\put(4.5,3.3){\circle{3.5}}
\put(7.5,3.3){\circle{5.5}}

\put(8,5.9){\makebox(0,0)[b]{$\Dc(\M) \leq \aleph_0$}}
\put(3.9,5.9){\makebox(0,0)[b]{$\G(\M) \leq \aleph_0$}}
\put(4,4.9){\makebox(0,0)[b]{$\G(\M) =1$}}
\put(6,.5){\vector(0,1){.8}}
\put(6,0){\makebox(0,0)[b]{$[\chi_r(\M) \leq \aleph_0] = [C \cup D]$ }}

\put(2.2,3.3){$A$}
\put(3.6,3.3){$B$}
\put(5.4,3.3){$C$}
\put(6.6,3.3){$D$}
\put(8.5,3.3){$E$}

\end{picture}
\caption{The ``small" von Neumann algebras described in \eqref{E:int}.  Question \ref{T:gen} asks whether $D$ is empty.  Question \ref{T:gen2} asks whether $A$ and $D$ are both empty.}
\end{figure}

The next theorem shows how several cardinal invariants for von Neumann algebras are related.  Some special cases were noted in work of Hu and Neufang (e.g., \cite[Proposition 3.2]{Hu} and \cite[Corollary 2.7]{HN}); their emphases were different and are briefly discussed in Section \ref{S:HN}.

\begin{theorem} \label{T:estimate}
Let $\M$ be a von Neumann algebra.
\begin{enumerate}
\item One can write $\M$ as a direct sum $\sum^\oplus_{i \in I} \M_i$, where $|I| \leq \Dc(\z(\M)) \leq \Dc(\M)$ and for each $i$,
$$\chi_r(\M_i) \leq \aleph_0 \cdot \G(\M) = s\text{-}\D(\M) = s\text{-}\D(\M_{\leq 1}).$$
\item The following relations hold:
\begin{equation*} \label{E:estimate}
\tag{$\heartsuit$} \G(\M) \cdot \Dc(\M) = \chi_r(\M) \leq \aleph_0 \cdot \chi_r(\M) = \D(\M_*) \leq |\M_*| = \C \cdot \chi_r(\M)^{\aleph_0} \leq \C^{\textnormal{gen}(\M)}.
\end{equation*}
\end{enumerate}
Thus $\G(\M)$ and $\Dc(\M)$ together determine $s$-$\D(\M)$, $\chi_r(\M)$, $\D(\M_*)$, and $|\M_*|$.
\end{theorem}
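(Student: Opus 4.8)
The plan is to establish the two displayed estimates directly and then read off the final determination statement, organizing everything around four clean facts and isolating the one genuinely delicate inequality.

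First I would prove $s\text{-}\D(\M)=s\text{-}\D(\M_{\leq 1})=\aleph_0\cdot\G(\M)$. If $S$ generates $\M$ with $|S|=\G(\M)$, normalized into $\M_{\leq 1}$, the unital $*$-algebra $\A_0$ it generates over $\Q+i\Q$ has cardinality $\aleph_0\cdot\G(\M)$, is $\sigma$-strongly dense in $\M$ by the bicommutant theorem, and by Kaplansky $(\A_0)_{\leq 1}$ is $\sigma$-strongly dense in $\M_{\leq 1}$; conversely any $\sigma$-strongly dense subset generates $\M$ and has cardinality at least $\aleph_0$, giving the reverse inequalities. Next I would build the decomposition of part (1): since every nonzero von Neumann algebra carries a nonzero normal state, a Zorn argument yields a maximal family $\{z_i\}_{i\in I}$ of pairwise orthogonal central projections, each the central support of (the support of) a normal state $\phi_i$ supported under $1-\sum_{j}z_j$; maximality forces $\sum z_i=1$, so $\M=\sum^\oplus\M z_i$ with $|I|\leq\Dc(\z(\M))\leq\Dc(\M)$. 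Writing $\M_i=\M z_i$, the coordinate map $x\mapsto xz_i$ shows $\G(\M_i)\leq\G(\M)$.

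The key point for part (1) is that $\pi_{\phi_i}$ is a \emph{faithful} normal representation of $\M_i$ on the GNS space $\h_{\phi_i}$ with a \emph{cyclic} vector $\xi_i$ (its kernel is exactly $\M(1-z_i)$). Generating $\M_i$ over $\Q+i\Q$ and applying the resulting countable-times-$\G(\M_i)$ family to $\xi_i$ (using Kaplansky to keep the approximating net bounded) produces a dense subset of $\h_{\phi_i}$, so $\dim\h_{\phi_i}\leq\D(\h_{\phi_i})\leq\aleph_0\cdot\G(\M_i)\leq\aleph_0\cdot\G(\M)$; hence $\chi_r(\M_i)\leq\aleph_0\cdot\G(\M)=s\text{-}\D(\M)$, completing (1). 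Summing faithful representations over $I$ then gives the upper estimate $\chi_r(\M)\leq|I|\cdot\sup_i\chi_r(\M_i)\leq\aleph_0\cdot\G(\M)\cdot\Dc(\M)$. For the matching lower bounds, realize $\M\subseteq B(\h)$ with $\dim\h=\chi_r(\M)$. A family of $\Dc(\M)$ pairwise orthogonal nonzero projections has orthogonal nonzero ranges, so $\chi_r(\M)\geq\Dc(\M)$. The inequality $\chi_r(\M)\geq\G(\M)$ is the subtle step, precisely where one must beware the warning of Section~\ref{S:size} that $\sigma$-strong density characters can grow on subspaces. I would sidestep that trap as follows: since $\M=(\M_*)^*$, Hahn--Banach applied to a norm-dense subset of $\M_*$ of size $\D(\M_*)$ yields a \emph{total} subset of $\M$ of the same cardinality, whose linear span is $\sigma$-weakly dense; and a $\sigma$-weakly dense subset automatically generates $\M$, so $\G(\M)\leq\D(\M_*)$. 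Combined with $\D(\M_*)\leq\aleph_0\cdot\chi_r(\M)$ --- which follows from the metric quotient $B(\h)_*=\mathcal T(\h)\twoheadrightarrow\M_*$ and $\D(\mathcal T(\h))=\aleph_0\cdot\dim\h$ --- this gives $\G(\M)\leq\aleph_0\cdot\chi_r(\M)$. For infinite-dimensional $\M$ one has $\Dc(\M)\geq\aleph_0$, which absorbs the stray $\aleph_0$'s and collapses the three inequalities into $\chi_r(\M)=\G(\M)\cdot\Dc(\M)$; the finite-dimensional case is degenerate (there $\chi_r=\Dc$) and checked directly.

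The relation $\aleph_0\cdot\chi_r(\M)=\D(\M_*)$ then follows, the bound $\D(\M_*)\leq\aleph_0\cdot\chi_r(\M)$ being the quotient estimate just used, and the reverse following from $\D(\M_*)\geq\max(\G(\M),\Dc(\M))=\chi_r(\M)$ (using $\G(\M)\leq\D(\M_*)$ together with the isometric $\ell^1(\Dc(\M))\hookrightarrow\M_*$ assembled from normal states with orthogonal supports). For the cardinality, the standard fact that an infinite-dimensional Banach space $X$ satisfies $|X|=\D(X)^{\aleph_0}$ gives $|\M_*|=(\aleph_0\cdot\chi_r(\M))^{\aleph_0}=\C\cdot\chi_r(\M)^{\aleph_0}$ in one stroke, while restricting a normal functional to the $\sigma$-strongly dense set $\A_0$ of size $\aleph_0\cdot\G(\M)$ embeds $\M_*$ into $\mathbb{C}^{\A_0}$, whence $|\M_*|\leq\C^{\G(\M)}$; chaining these yields $(\heartsuit)$ and in particular $\Dc(\M)\leq\chi_r(\M)\leq|\M_*|\leq\C^{\G(\M)}$. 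Finally, each of $s\text{-}\D(\M)=\aleph_0\cdot\G(\M)$, $\chi_r(\M)=\G(\M)\cdot\Dc(\M)$, $\D(\M_*)=\aleph_0\cdot\G(\M)\cdot\Dc(\M)$, and $|\M_*|=\C\cdot(\G(\M)\cdot\Dc(\M))^{\aleph_0}$ is an explicit function of the pair $(\G(\M),\Dc(\M))$, which is exactly the asserted determination. I expect the single inequality $\chi_r(\M)\geq\G(\M)$ to be the main obstacle, for precisely the subspace-density reason the author flags; the Hahn--Banach ``total set generates'' device is what makes it go through cleanly.
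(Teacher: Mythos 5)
Your proposal is correct, but it reaches the theorem by a genuinely different route at three key points, so it is worth comparing. For the decomposition in part (1), the paper fixes an ambient representation, cuts the Hilbert space into $\M$-invariant subspaces $\overline{\A_0 \xi}$ of density $\leq \aleph_0 \cdot \G(\M)$, and then disjointifies the resulting central projections by well-ordering ($y_i = z_i - \vee_{j<i} z_j$); you instead take a maximal orthogonal family of central supports of normal states and use GNS representations, exploiting that $\ker \pi_{\phi_i} = \M(1-z_i)$ so that $\pi_{\phi_i}$ is faithful and cyclic on $\M_i$ --- a representation-free construction that yields the same bounds and arguably packages the bookkeeping more cleanly. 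For the delicate inequality $\G(\M) \leq \chi_r(\M)$, the paper argues directly inside $\B(\ell^2_\kappa)_{\leq 1}$, selecting points of $\M_{\leq 1}$ from $\kappa$ many basic strong neighborhoods to show $s\text{-}\D(\M_{\leq 1}) \leq s\text{-}\D(\B(\ell^2_\kappa)_{\leq 1})$; your predual detour --- near-norming elements for a norm-dense subset of $\M_*$ give a weak*-total set, whose $W^*$-closed span is $\M$, so $\G(\M) \leq \D(\M_*) \leq \aleph_0 \cdot \dim \h$ --- is legitimate and neatly evades the subspace-density pathology, since $\M_*$ is metric and density character is monotone there. (The paper's version has the side benefit of an explicit ball-level density comparison that it flags for reuse toward Theorem \ref{T:monotone}(2), but nothing later actually breaks under your route, since $s\text{-}\D(\M) = \aleph_0 \cdot \G(\M)$ recovers the needed estimate.) Finally, for $\D(\M_*) \geq \aleph_0 \cdot \chi_r(\M)$ the paper invokes the standard form and Raynaud's homeomorphism $\M_* \approx L^2(\M)$, whereas you assemble an isometric copy of $\ell^1_{\textnormal{dec}(\M)}$ from normal states with orthogonal supports and combine it with $\G(\M) \leq \D(\M_*)$; likewise your proof of $|\M_*| \leq \C^{\textnormal{gen}(\M)}$ by injectivity of restriction to the $\sigma$-weakly dense set $\A_0$ is simpler than the paper's passage through $z\A^{**}$. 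In short: the paper buys explicit topological control (and a reusable ball-density lemma) at the cost of a combinatorial neighborhood argument and standard-form machinery; your duality-based route is more elementary and self-contained. Two small points to make airtight in a final write-up: in the $\ell^1$ embedding, verify the norm identity by testing against contractions $\sum \lambda_\alpha p_\alpha$ with unimodular $\lambda_\alpha$, and in the totality argument record the quantitative choice $|\omega_\alpha(s_\alpha)| \geq \|\omega_\alpha\|/2$ so that an approximation shows the preannihilator of your set vanishes.
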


\begin{proof} We first dispose of the case where $\M$ is finite-dimensional.  Then $\M$ is of the form $\sum^\oplus_k \mathbb{M}_{n_k}$, with $\G(\M) = 1$ and $\chi_r(\M) = \Dc(\M) = \sum_k n_k$.  All claims of the theorem are easily verified.  \textit{For the remainder of the proof we assume that $\M$ is infinite-dimensional}, so that $\chi_r(\M)$ and $\Dc(\M)$ are necessarily infinite (\cite[Proposition 2.5]{HN}).

(1) Let $\M$ be generated by $\{x_\alpha\}_{\alpha < \text{gen}(\M) }$.  Set $\A_0$ to be the $\sigma$-strongly dense subset of $\M$ consisting of noncommuting *-polynomials in the $x_\alpha$ with coefficients in $\Q + i\Q$.  Because any $\sigma$-strongly dense set is infinite and generating, we have
$$s\text{-} \D(\M) \leq |\A_0| \leq \aleph_0 \cdot \G(\M) \leq \aleph_0 \cdot s\text{-} \D(\M) = s\text{-} \D(\M).$$

As mentioned earlier, it is in general false that the density character of a topological space dominates the density character of a subspace, so we need a short argument to establish that $s$-$\D(\M_{\leq 1})$ also equals $|\A_0|=s\text{-}\D(\M)$.  The Kaplansky density theorem implies that $\M_{\leq 1} \cap \A_0$ is $\sigma$-strongly dense in $\M_{\leq 1}$, giving $s$-$\D(\M_{\leq 1}) \leq |\M_{\leq 1} \cap \A_0| = |\A_0|$.  On the other hand, if $S$ is any $\sigma$-strongly dense set in $\M_{\leq 1}$, then the set of positive rational multiples of elements of $S$ (which has the same cardinality as $S$) is $\sigma$-strongly dense in $\M$: this gives $s$-$\D(\M_{\leq 1}) \geq s\text{-}\D(\M)$.

Now represent $\M$ on a Hilbert space $\h$ and choose any $0 \neq \xi \in \h$.  The space $\h_0 = \overline{\M \xi} = \overline{\A_0 \xi}$ is $\M$-invariant and clearly has density character $\leq |\A_0| = \aleph_0 \cdot \G(\M)$.  Since $\M$ is represented normally (but not necessarily faithfully) on $\h_0$, the image of $\M$ is isomorphic to $z\M$ for some central projection $z \in \M$.  

By Zorn's lemma $\h$ can then be decomposed as a sum of $\M$-invariant subspaces $\{\h_i\}_{i \in I}$ with $\dim \h_i \leq \aleph_0 \cdot \G(\M)$.  Write $\M|_{\h_i} \simeq z_i \M$.  Totally order the index set, and define $y_i = z_i - \vee_{j < i} z_j$.  Set $I' = \{i \in I \mid y_i \neq 0\}$ and $\M_i = y_i \M$ for $i \in I'$, so $\{y_i\}_{i \in I'}$ are nonzero central projections summing to 1 and $\M \simeq \sum_{I'}^\oplus y_i \M$.  By definition $|I'| \leq \Dc(\z(\M))$.  Also $\chi_r(\M_i) \leq \aleph_0 \cdot \G(\M)$, since $\M_i$ can be represented on a subspace of $\h_i$.


(2) We treat each nontrivial relation separately.

\underline{$\G(\M) \leq \chi_r(\M)$}: Since $\G(\M) \leq \aleph_0 \cdot \G(\M) = s\text{-}\D(\M_{\leq 1})$ from part (1), it suffices to prove that $s\text{-} \D(\M_{\leq 1}) \leq \kappa$ whenever $\M \subseteq \B(\ell^2_\kappa)$.  We effectively show that $s\text{-} \D(\M_{\leq 1}) \leq s\text{-} \D(\B(\ell^2_\kappa)_{\leq 1})$.  Later (Theorem \ref{T:monotone}(2)) we will combine this fact with others to obtain the same conclusion for any inclusion of von Neumann algebras.  

Fix a basis $\{\xi_\beta\}_{\beta < \kappa}$ for $\ell^2_\kappa$.  Let $\{x_\alpha\}_{\alpha < \kappa} \subset \B(\ell^2_\kappa)_{\leq 1}$ be a $\sigma$-strongly dense set: for example, one can take the contractive operators whose matrices have finitely many nonzero entries taking values in $\Q + i\Q$.  The $\sigma$-strong topology on $\B(\ell^2_\kappa)_{\leq 1}$ is just the strong topology, generated by the seminorms $p_\beta(y) = \|y \xi_\beta\|$.  Consider the $\kappa$ strongly open subsets of
$\B(\ell^2_\kappa)_{\leq 1}$
$$V_{\alpha, F, n} = \{y\mid p_\beta(y - x_\alpha) < 1/n \text{ for all $\beta$ in the finite set of indices $F$}\}.$$
For each multi-index $(\alpha, F, n)$, choose an element $y_{\alpha, F, n} \in \M_{\leq 1} \cap V_{\alpha, F, n}$ if the intersection is nonempty.  We claim that the set of $\leq \kappa$ elements chosen is strongly dense in $\M_{\leq 1}$.

For the claim, it suffices to take any $y \in \M_{\leq 1}$, any $F$, and any $n$, and show that some $y_{\alpha', F', n'}$ satisfies $p_\beta(y - y_{\alpha', F', n'}) < \frac1n$ for all $\beta \in F$.  By density of $\{x_\alpha\}$, find $x_{\alpha'}$ with $p_\beta(y - x_{\alpha'}) < \frac{1}{2n}$ for all $\beta \in F$.  Then $V_{\alpha', F, 2n}$ intersects $\M_{\leq 1}$ nontrivially (it contains $y$), so it contains an element $y_{\alpha', F, 2n}$.  Finally note that for $\beta \in F$, $p_\beta(y - y_{\alpha', F, 2n}) \leq p_\beta(y - x_{\alpha'}) + p_\beta(x_{\alpha'} - y_{\alpha', F, 2n}) < \frac1n$.

\underline{$\Dc(\M) \leq \chi_r(\M)$}: If $\M \subseteq \B(\ell^2_\kappa)$, $\M$ cannot contain a set of $> \kappa$ pairwise orthogonal projections.

\underline{$\Dc(\M) \cdot \G(\M) = \chi_r(\M)$}: From part (1) we have
$$\chi_r(\M) = \chi_r \left(\sum \nolimits_I^\oplus \M_i \right) = \sum \nolimits_I\chi_r(\M_i) \leq |I| \cdot \aleph_0 \cdot \G(\M) \leq \Dc(\M) \cdot \aleph_0 \cdot \G(\M) = \Dc(\M) \cdot \G(\M).$$
(This also uses the additivity of $\chi_r$ on direct sums, an easy fact noted as part of Theorem \ref{T:dsum} below.)  The opposite inequality follows from the preceding two underlined statements.

\underline{$\aleph_0 \cdot \chi_r(\M) = \D(\M_*)$}: Recall that $L^2(\M)$ denotes the underlying Hilbert space in a canonical left regular representation (with extra structure) called the \textit{standard form} of $\M$ (\cite{H}).  Since $L^2(\M)$ and $\M_* \simeq L^1(\M)$ are homeomorphic (\cite[Lemma 3.2]{Ra}), we have $\D(\M_*) = \D(L^2(\M)) = \aleph_0 \cdot \dim(L^2(\M))  \geq \aleph_0 \cdot \chi_r(\M)$.  On the other hand, if $\M \subseteq \B(\h)$, then
$$\D(\M_*) = \D(\B(\h)_*/\M_\perp) \leq \D(\B(\h)_*) = \aleph_0 \cdot \dim \h,$$
which suffices for the conclusion.  Here $\M_\perp$ is the preannihilator of $\M$ (the annihilator of $\M$ in $\B(\h)_*$).  The last equality is justified by identifying $\B(\h)_*$ with the trace class operators under the tracial pairing; a dense set can be obtained by choosing a basis for $\h$ and considering matrices with finitely many nonzero entries taking values in $\Q + i\Q$.

\underline{$|\M_*| = \C \cdot \chi_r(\M)^{\aleph_0}$}: This follows from the preceding underlined statement and the fact that the cardinality of any Banach space $\mathfrak{X}$ is $\D(\X)^{\aleph_0}$ (\cite[Lemma 2]{Kr}).

\underline{$|\M_*| \leq\C^{\text{gen}(\M)}$}: With $\A_0$ as in the proof of part (1), let $\A = C^*(\{x_i\})$ be the norm closure of $\A_0$.  Now $\M \simeq z\A^{**}$ for some central projection $z$ in the von Neumann algebra $\A^{**}$, and $\M_* \simeq z \A^*$.  Any linear functional on $\A$ is completely determined by its restriction to $\A_0$, so $|\M_*| = |z\A^*| \leq |\A^*| \leq \C^{|\A_0|} = \C^{\aleph_0 \cdot \text{gen}(\M)} = \C^{\text{gen}(\M)}$.
\end{proof}


\begin{remark} \label{T:dc}
The proof of Theorem \ref{T:estimate}(1) shows that $\aleph_0 \cdot \G(\M)$ is also the density character of $\M$ or $\M_{\leq 1}$ in the $\sigma$-strong* or $\sigma$-weak topology.  
\end{remark}

\begin{example} \label{T:typeI} (Type I factors)  The representation density and decomposability number of $\B(\ell^2_\kappa)$ are easy to compute; one argument is $\kappa = \dim (\ell^2_\kappa) \geq \chi_r(\B(\ell^2_\kappa)) \geq \Dc(\B(\ell^2_\kappa)) \geq \kappa$, using \eqref{E:estimate} for the third relation and minimal projections for the fourth.  As for the $\G$ invariant, note that a type I factor cannot be written nontrivially as a direct sum, so Theorem \ref{T:estimate}(1) gives $\kappa = \chi_r(\B(\ell^2_\kappa)) \leq \aleph_0 \cdot \G(\B(\ell^2_\kappa)) \leq \aleph_0 \cdot \kappa^2$ (generating $\B(\ell^2_\kappa)$ from its matrix units).  This forces $\G(\B(\ell^2_\kappa)) = \kappa$ for $\kappa$ uncountable.  For $\kappa \leq \aleph_0$, $\B(\ell^2_\kappa)$ is singly-generated by classical results, being either finite-dimensional or properly infinite.
\end{example}

We separate out the following consequence of Theorem \ref{T:estimate} for use in Section \ref{S:main}.  It is in some sense ``known to the experts."  We could not find it fully proved in the literature, although it has been stated (\cite[bottom of p.95]{E}), and half of it (remove the modifier ``$\leq \C$") appeared as \cite[Lemma 6.5.2]{SS1}.  Its converse is also valid (see Remark \ref{T:c}(2)).

\begin{corollary} \label{T:cg}
A countably-generated von Neumann algebra $\M$ is a direct sum of $\leq \C$ separably-acting algebras.
\end{corollary}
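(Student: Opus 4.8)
The plan is to read everything off Theorem \ref{T:estimate}, since here ``countably-generated'' means exactly $\G(\M) \leq \aleph_0$ and ``separably-acting'' means exactly $\chi_r(\cdot) \leq \aleph_0$. So the goal is to produce a decomposition $\M \simeq \sum^\oplus_{i \in I} \M_i$ with $|I| \leq \C$ and $\chi_r(\M_i) \leq \aleph_0$ for every $i$. I would first dispose of the finite-dimensional case, where $\M$ is itself separably-acting and one may take a single summand; so assume $\M$ is infinite-dimensional.

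Next I would apply Theorem \ref{T:estimate}(1) verbatim. It furnishes a direct-sum decomposition $\M \simeq \sum^\oplus_{i \in I} \M_i$ with $|I| \leq \Dc(\z(\M)) \leq \Dc(\M)$ and, for each $i$, $\chi_r(\M_i) \leq \aleph_0 \cdot \G(\M)$. Since $\G(\M) \leq \aleph_0$, the right-hand side collapses to $\aleph_0$, so each $\M_i$ is separably-acting; this settles the summands with no additional work.

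It then remains only to bound the index set, and for this I would invoke the estimate chain in Theorem \ref{T:estimate}(2), giving $\Dc(\M) \leq \chi_r(\M) \leq \D(\M_*) \leq |\M_*| \leq \C^{\text{gen}(\M)}$. Because $\G(\M) \leq \aleph_0$, cardinal arithmetic yields $\C^{\text{gen}(\M)} \leq \C^{\aleph_0} = \C$, so $|I| \leq \Dc(\M) \leq \C$, as required. I do not anticipate any real obstacle: the entire content is already packaged in Theorem \ref{T:estimate}, and the corollary simply specializes parts (1) and (2) to the hypothesis $\G(\M) \leq \aleph_0$. The only point demanding a moment's care is recording that the bound $\Dc(\M) \leq \C^{\text{gen}(\M)}$ (rather than the coarser $\chi_r(\M) \leq \C^{\text{gen}(\M)}$) is exactly what forces the number of summands down to $\C$.
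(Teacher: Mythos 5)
Your proposal is correct and matches the paper's own proof: both apply Theorem \ref{T:estimate}(1) to get a decomposition into $\leq \Dc(\M)$ summands with $\chi_r(\M_i) \leq \aleph_0 \cdot \G(\M) = \aleph_0$, and then bound the index set via $\Dc(\M) \leq \C^{\text{gen}(\M)} = \C$ from \eqref{E:estimate}. The separate finite-dimensional case you add is harmless but unnecessary, since the argument goes through uniformly.
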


\begin{proof}
If $\G(\M) \leq \aleph_0$, Theorem \ref{T:estimate}(1) says that $\M$ is a direct sum of $\leq \Dc(\M)$ von Neumann algebras $\M_i$, each satisfying $\chi_r(\M_i) \leq \aleph_0 \cdot \G(\M) = \aleph_0$.  Thus the $\M_i$ are separably-acting.  There are at most $\C$ of them, as $\Dc(\M) \leq \C^{\text{gen}(\M)} = \C$ by \eqref{E:estimate}.
\end{proof}

\section{An equivalent formulation of the generator problem} \label{S:main}

We start this section with some review of the relevant history.

In the very first paper on what are now called von Neumann algebras, von Neumann showed that an abelian von Neumann algebra is generated by a single self-adjoint operator (\cite[Satz 10]{vN}).  This was 1929, so Hilbert space meant $\ell^2$ (explicitly stated in the opening paragraphs), and thus the result is often stated as ``separably-acting abelian von Neumann algebras are singly-generated."  But in his proof, the first step is to note that the algebra is generated by a countable family of projections; he then gives a purely algebraic method for constructing a generator.  Since the spectral theory in the same paper shows that a singly-generated abelian von Neumann algebra is generated by a countable family of spectral projections, a countably-generated abelian von Neumann algebra is also generated by countably many projections, and von Neumann has really shown that ``countably-generated abelian von Neumann algebras are singly-generated."  (His spectral theory is developed on a separable Hilbert space, but this is not needed for the existence of spectral projections.)  Von Neumann's construction of a generator is quite intricate.  Nowadays we have an elegant one-paragraph proof that goes back at least to Rickart's 1960 book (\cite[A.2.1]{R}).

From von Neumann's result and the decomposition into real and imaginary parts, a general von Neumann algebra is singly-generated if and only if it is generated by two abelian *-subalgebras that are either countably-generated or \textit{a fortiori} separably-acting.  This seems to have been first leveraged nontrivially in Pearcy's 1962 paper \cite{P} on type I algebras.  In 1963 Suzuki and Sait\^{o} made the following observation.

\begin{lemma} \label{T:SS} $($\cite[Lemma 4]{SuS}$)$
If a von Neumann algebra is generated by countably many commuting singly-generated *-subalgebras, then it is singly-generated.
\end{lemma}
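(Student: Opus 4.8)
The plan is to reduce to the criterion recalled just above the lemma: a von Neumann algebra is singly generated as soon as it is generated by two abelian *-subalgebras, each of which is countably generated (equivalently, by von Neumann's theorem, each singly generated). So the goal is to exhibit $\M$ in the form $W^*(C_1, C_2)$ with $C_1$ and $C_2$ abelian and countably generated. First I would fix, for each $n$, a single generator $a_n$ of the $n$-th subalgebra $A_n$, and split it into real and imaginary parts $a_n = h_n + i k_n$ with $h_n, k_n$ self-adjoint; then $A_n = W^*(h_n, k_n)$, and since the $A_n$ generate $\M$ we have $\M = W^*(\{h_n, k_n\}_{n})$.

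The crux is a regrouping: instead of grouping the generators by block $n$, I would group all the real parts together and all the imaginary parts together, setting $C_1 = W^*(\{h_n\}_n)$ and $C_2 = W^*(\{k_n\}_n)$. Because the subalgebras $A_n$ pairwise commute, $h_n$ commutes with $h_m$ whenever $n \neq m$, so $\{h_n\}_n$ is a commuting family of self-adjoint operators; the elementary inclusion $W^*(S) \subseteq S'$ for a self-adjoint commuting family $S$ then gives $C_1 \subseteq C_1'$, i.e.\ $C_1$ is abelian, and likewise $C_2$ is abelian. Each $C_i$ is generated by a countable set, hence is countably generated, hence (von Neumann's theorem for abelian algebras, recalled above) is singly generated by a self-adjoint $c_i$.

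To finish, I would observe that $C_1$ and $C_2$ together contain every $h_n$ and $k_n$, hence every $a_n$, so $\M = W^*(\{h_n, k_n\}_n) = W^*(C_1, C_2) = W^*(c_1, c_2) = W^*(c_1 + i c_2)$, exhibiting a single generator $x = c_1 + i c_2$ of $\M$.

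The only subtle point---and the sole place the commuting hypothesis enters---is the claim that each regrouped family is abelian; without cross-commutativity the $h_n$ need not commute among themselves and $C_1$ would fail to be abelian. Indeed, dropping the commuting hypothesis would turn the statement into the full generator problem, so all of the content must reside in this one step, and everything else is bookkeeping: the splitting into real and imaginary parts, the elementary fact that a von Neumann algebra generated by a commuting self-adjoint family is abelian, and the two invoked theorems of von Neumann. The essential trick to emphasize is therefore that \emph{the partition of generators that proves single generation is by real/imaginary part, not by subalgebra}.
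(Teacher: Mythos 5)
Your proof is correct and follows essentially the same route as the paper's own sketch, which likewise decomposes each generator as $x_j + i y_j$, observes that $W^*(\{x_j\})$ and $W^*(\{y_j\})$ are abelian and countably-generated, obtains single self-adjoint generators $x$ and $y$ by von Neumann's theorem, and concludes that $x + iy$ generates the algebra. Your explicit verification that cross-commutativity of the subalgebras makes each regrouped family commute (and hence each $C_i$ abelian) simply fills in the one detail the paper leaves implicit.
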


For completeness we sketch the proof.  If generators of the subalgebras are decomposed into real and imaginary parts as $x_j + i y_j$, then $W^*(\{x_j\})$ and $W^*(\{y_j\})$ are abelian and countably-generated.  By von Neumann's result each has a single self-adjoint generator, say $x$ and $y$ respectively.  Then $x + i y$ generates the original algebra.

Lemma \ref{T:SS} implies in particular that the direct sum of countably many singly-generated algebras is singly-generated (noted, for instance, in \cite[Remark, p. 451]{Saito}).  The following improvement seems to be new.

\begin{lemma} \label{T:sum}
Let $\{\M_i\}_{i \in I}$ be a set of $\leq \C$ singly-generated von Neumann algebras.  Then $\sum^\oplus \M_i$ is also singly-generated.
\end{lemma}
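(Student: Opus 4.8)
The plan is to realize $\M := \sum^\oplus_{i\in I}\M_i$ as the von Neumann algebra generated by two \emph{commuting} singly-generated subalgebras and then to quote Lemma~\ref{T:SS}. The two subalgebras I have in mind are a central copy of $\ell^\infty_I$, which records the ``labels'' of the summands and supplies all the coordinate projections $e_i$, and the algebra generated by a single bounded tuple of generators of the $\M_i$, which supplies the fibrewise structure. The whole point of the splitting is that labels and fibrewise generators then sit in commuting subalgebras, so that Lemma~\ref{T:SS} can fuse them into one generator; note that only two subalgebras are needed, which is harmless since two is countably many.

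Concretely, I would first choose a generator $a_i$ of each $\M_i$ and rescale it by a positive scalar so that $\|a_i\|\leq 1$; since rescaling by a nonzero scalar leaves the generated von Neumann algebra unchanged, the tuple $x=(a_i)_{i\in I}$ is a genuine element of $\M$ whose generated algebra restricts to $\M_i$ in the $i$-th coordinate, and I set $B_2=W^*(x)$. Because $|I|\leq\C$ I can pick pairwise distinct scalars $\{\lambda_i\}_{i\in I}\subset\mathbb{C}$ and form $c=\sum_i\lambda_i e_i\in\z(\M)$; exactly as for $\ell^\infty_\C$ in the introduction, the distinctness of the $\lambda_i$ forces $c$ to generate the central abelian subalgebra $B_1=\{\sum_i\mu_i e_i:(\mu_i)\in\ell^\infty_I\}\simeq\ell^\infty_I$, so $B_1$ is singly-generated and contains every $e_i$. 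As $B_1\subseteq\z(\M)$, it commutes with $B_2$.

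The crux is then to verify that $B_1$ and $B_2$ jointly generate $\M$. Since each $e_i\in B_1$, the product $e_ix$ is the element equal to $a_i$ in the $i$-th slot and $0$ elsewhere, so $W^*(B_1\cup B_2)$ contains $W^*(e_ix)$, which is the full corner $e_i\M\simeq\M_i$. Every finitely-supported element of $\M$ is a finite sum of such pieces and hence also lies in $W^*(B_1\cup B_2)$; and the finitely-supported elements are $\sigma$-weakly dense in $\M$, because $\M_*$ is the $\ell^1$-direct sum of the preduals $(\M_i)_*$ and so truncating a bounded tuple to a large enough finite set of coordinates approximates it against any finite family of normal functionals. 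Therefore $W^*(B_1\cup B_2)=\M$, and Lemma~\ref{T:SS} applied to the commuting singly-generated pair $B_1,B_2$ shows that $\M$ is singly-generated.

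I expect the main obstacle to be this joint-generation step, and inside it the two facts that (a) the central algebra $B_1$ is \emph{all} of $\ell^\infty_I$ --- this is precisely where the hypothesis $|I|\leq\C$ enters, since pairwise distinct scalar labels exist exactly when $|I|\leq\C$ --- and (b) the finitely-supported elements are $\sigma$-weakly dense, which is what lets one pass from the individual corners $e_i\M$ to all of $\M$. Once these are secured, the reduction to two commuting singly-generated subalgebras makes Lemma~\ref{T:SS} immediately applicable and finishes the argument.
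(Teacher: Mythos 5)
Your proof is correct and is essentially the paper's own argument: the paper likewise combines the singly-generated central copy $W^*(\{e_i\})\simeq\ell^\infty_I$ (using $|I|\leq\C$) with $W^*((x_i)_i)$ for norm-one generators $x_i$ of the summands, and fuses the two commuting singly-generated subalgebras via Lemma~\ref{T:SS}. You merely spell out the details (spectral projections of the labeling element, $\sigma$-weak density of finitely supported elements) that the paper leaves implicit.
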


\begin{proof}
For each $i$, let $x_i$ be a generator for $\M_i$ with norm $\leq 1$.  Since $|I| \leq \C$, $W^*(\{e_i\}) \simeq \ell^\infty_{I}$ is a singly-generated subalgebra of the center of $\sum^\oplus \M_i$.  The commuting singly-generated algebras $W^*(\{e_i\})$ and $W^*((x_i)_i)$ together generate all of $\sum^\oplus \M_i$, which is therefore singly-generated by Lemma \ref{T:SS}.
\end{proof}

\begin{remark} \label{T:c} ${}$
\begin{enumerate}
\item Lemma \ref{T:sum} generalizes neither Lemma \ref{T:SS} nor the von Neumann result.  In particular, it does not say that an abelian von Neumann algebra generated by $\leq\C$ elements is singly-generated; that is false.  There are counterexamples in Example \ref{T:LG}(2,3) and at the end of Section \ref{S:free}.
\item Lemma \ref{T:sum} is a noncommutative analogue of the Pondiczery-Hewitt-Marczewski theorem from classical point-set topology (\cite{Po,He,M}): the Cartesian product of $\leq \C$ separable Hausdorff spaces is still separable.  In fact, this theorem and the equality $\aleph_0 \cdot \G(\M) = s\text{-}\D(\M)$ can be used to show directly that the direct sum of $\leq \C$ countably-generated von Neumann algebras is still countably-generated.  For countable generation is equivalent to $\sigma$-strong separability, and the $\sigma$-strong topology on a direct sum is the product topology.  (This can also be proved in the same way as Lemma \ref{T:sum}.)  When combined with Corollary \ref{T:cg}, this gives the following characterization: \textit{a von Neumann is countably-generated if and only if it is a direct sum of $\leq \C$ separably-acting algebras.}

    In terms of cardinal invariants, von Neumann algebras behave very much like a tractable class of topological spaces, with $\G$, $\chi_r$, and $\Dc$ substituted for density, weight, and cellularity, respectively (\cite{C}). 
\item Lemma \ref{T:sum} is sufficient to prove the next theorem.  But the reader will guess that it can be generalized, and we do this in Theorem \ref{T:dsum} below.
\end{enumerate}
\end{remark}

\begin{theorem} \label{T:main}
Questions \ref{T:gen} and \ref{T:gen2} are equivalent: if all separably-acting von Neumann algebras are singly-generated, then all countably-generated von Neumann algebras are singly-generated.
\end{theorem}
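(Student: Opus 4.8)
The plan is to prove only the forward implication, since the reverse is immediate: every separably-acting von Neumann algebra is countably-generated (as noted just after Question \ref{T:gen}), so a positive answer to Question \ref{T:gen2} already answers Question \ref{T:gen}. Accordingly, I assume that the generator problem has a positive answer---every separably-acting von Neumann algebra is singly-generated---and aim to deduce that every countably-generated von Neumann algebra is singly-generated.

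The strategy is to reduce an arbitrary countably-generated $\M$ to the separably-acting case through a direct-sum decomposition. First I would invoke Corollary \ref{T:cg}: since $\M$ is countably-generated, it can be written as $\M \simeq \sum^\oplus_{i \in I} \M_i$ with $|I| \leq \C$ and each summand $\M_i$ separably-acting. By the standing hypothesis, each $\M_i$ is then singly-generated. Finally I would apply Lemma \ref{T:sum}, which guarantees that a direct sum of at most $\C$ singly-generated algebras is again singly-generated; this yields that $\M$ itself is singly-generated, completing the argument.

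I do not anticipate a genuine obstacle at this last stage, because the two ingredients have been engineered to dovetail. The real labor is already front-loaded into Corollary \ref{T:cg} and Lemma \ref{T:sum}, and the only point demanding care is the exact matching of their cardinal bounds. Corollary \ref{T:cg} produces at most $\C$ summands precisely because $\Dc(\M) \leq \C^{\G(\M)} = \C^{\aleph_0} = \C$ for countably-generated $\M$ (Theorem \ref{T:estimate}(2)), while Lemma \ref{T:sum} can absorb up to $\C$ summands by building a single generator for the central copy of $\ell^\infty_I$ whenever $|I| \leq \C$. Should either bound fail to be $\C$, the chain would break; thus the genuine crux of the equivalence is the arithmetic coincidence $\C^{\aleph_0} = \C$.
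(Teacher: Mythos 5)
Your proposal is correct and matches the paper's proof exactly: both assume a positive answer to Question \ref{T:gen}, decompose a countably-generated $\M$ into $\leq \C$ separably-acting summands via Corollary \ref{T:cg}, and reassemble a single generator with Lemma \ref{T:sum}. Your closing observation that the crux is the matching of the two cardinal bounds at $\C$ (via $\Dc(\M) \leq \C^{\textnormal{gen}(\M)} = \C$) is an accurate reading of why the argument dovetails.
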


\begin{proof}
Assume that all separably-acting von Neumann algebras are singly-generated.  Let $\M$ be countably-generated.  By Corollary \ref{T:cg}, $\M$ is a direct sum of $\leq \C$ separably-acting algebras, each singly-generated by assumption.  Then Lemma \ref{T:sum} implies that $\M$ is singly-generated.
\end{proof}

The author considers Question \ref{T:gen2} to be a natural formulation of the generator problem and closer in spirit to von Neumann's original result. Nearly all constructions involving generators have been algebraic, i.e., without reference to an underlying Hilbert space.  For example, Wogen's original proof that separably-acting properly infinite von Neumann algebras are singly-generated (\cite[Theorem 2]{Wo}) requires no change if $\M$ is only assumed to be countably-generated.  The exception is the use of direct integrals.

Recall that a von Neumann algebra is said to be \textit{approximately finite-dimensional (AFD)} if it has an increasing net of finite-dimensional *-subalgebras whose union is $\sigma$-strongly dense.

\begin{proposition} \label{T:AFD}
A countably-generated AFD von Neumann algebra $\M$ is singly-generated.
\end{proposition}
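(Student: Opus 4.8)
The plan is to reduce to the separably-acting case, where single generation of AFD algebras is already known, and then reassemble using the direct-sum machinery. First I would apply Corollary \ref{T:cg} to write $\M \simeq \sum^\oplus_{i \in I} \M_i$ with $|I| \leq \C$ and each $\M_i$ separably-acting. Writing $\M_i = y_i \M$ for the central projection $y_i$ cutting out the $i$-th summand, I would check that each $\M_i$ inherits the AFD property: if $\{A_\lambda\}$ is an increasing net of finite-dimensional $*$-subalgebras with $\sigma$-strongly dense union in $\M$, then $\{y_i A_\lambda\}$ is an increasing net of finite-dimensional $*$-subalgebras of $\M_i$ (each $y_i A_\lambda$ is a quotient of $A_\lambda$, hence finite-dimensional, and it is a $*$-subalgebra because $y_i$ is central), and its union is $\sigma$-strongly dense in $y_i \M$ since compression by the central projection $y_i$ is $\sigma$-strongly continuous and surjective onto $\M_i$. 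Thus each $\M_i$ is a separably-acting AFD von Neumann algebra.

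Second, I would invoke the known fact that a separably-acting AFD von Neumann algebra is singly-generated. This is where the structure theory — including the direct integral decomposition, which is available in the separable setting — enters: the type I part is handled by Pearcy (\cite{P}), the properly infinite part by Wogen (\cite[Theorem 2]{Wo}), and the finite part reduces to the single generation of the hyperfinite $\text{II}_1$ factor and its direct integrals over the center. Each $\M_i$ is therefore singly-generated.

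Finally, since there are at most $\C$ summands, Lemma \ref{T:sum} immediately yields that $\M \simeq \sum^\oplus \M_i$ is singly-generated, which completes the proof.

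I expect the main obstacle to be hidden entirely in the second step: the single generation of separably-acting AFD algebras is a nontrivial prior result, and it is exactly the point where direct integrals are needed (as flagged in the discussion following Theorem \ref{T:main}). The genuine contribution of the proposition lies in the reduction and reassembly (the first and last steps), which show that the weaker hypothesis of countable generation — rather than a separable predual — already suffices, by trading a direct integral over a possibly nonseparable center for a direct sum of at most $\C$ separable pieces.
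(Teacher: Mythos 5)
Your proof is correct and follows essentially the same route as the paper: reduce via Corollary \ref{T:cg} to a direct sum of $\leq \C$ separably-acting AFD summands, invoke the known single generation of separably-acting AFD algebras (the paper cites Willig's direct integral decomposition into AFD factors together with Suzuki--Sait\^{o}, while your type-decomposition sketch matches the paper's alternate argument in the remark that follows), and reassemble with Lemma \ref{T:sum}. Your explicit verification that each summand $y_i\M$ inherits the AFD property is a detail the paper dismisses as ``clearly'' true, but it is the same argument.
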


\begin{proof}
By Corollary \ref{T:cg}, $\M$ is a direct sum of $\leq \C$ separably-acting algebras, each clearly AFD.  By Lemma \ref{T:sum} it suffices to show that any separably-acting AFD algebra, say $\N$, is singly-generated.  This is known, but a little hard to pin down in the literature.  A very short argument goes by direct integral theory.  By \cite[Theorem 2]{W2}, $\N$ has a direct integral decomposition into (a.e.) AFD factors, each of which is singly-generated by \cite[Theorem 1]{SuS}.  Then their direct integral $\N$ is singly-generated (\cite[Theorem 1]{W}).
\end{proof}

\begin{remark}

Here is an alternate proof of the last step in Proposition \ref{T:AFD} that avoids both direct integral theory and post-1969 mathematics.  Decompose $\N$ into three summands that are type I, type $\text{II}_1$, and properly infinite.  The type $\text{II}_1$ summand is isomorphic to $\mathcal{R} \bar{\otimes} \A$, where $\A$ is abelian and $\mathcal{R}$ is the unique hyperfinite $\text{II}_1$ factor (\cite[Th\'{e}or\`{e}me 6]{Ka}).  The four commuting subalgebras $\R$, $\A$, the type I summand, and the properly infinite summand are each singly-generated by \cite[Theorem 1]{SuS}, \cite[Satz 10]{vN}, \cite{P}, and \cite[Theorem 2]{Wo}, respectively.  They generate $\N$, which is then singly-generated by Lemma \ref{T:SS}.

Suzuki and Saito wrote (\cite[p. 279]{SuS}) that single generation of $\R$ had been established in 1956 by Misonou, who apparently did not publish his proof.  But the earliest claim for this fact, also without proof, goes all the way back to Murray and von Neumann (\cite[Footnote 68]{MvN}).
\end{remark}


We conclude this section by showing that the generator problem is also equivalent to deciding whether all \textit{finitely}-generated algebras are singly-generated, or even just all finitely-generated $\text{II}_1$ factors.  It seems possible that this reduction could be useful.

The main tool is Shen's $[0, +\infty]$-valued invariant $\GG(\cdot)$ for countably-generated tracial von Neumann algebras, which was introduced in \cite{Shen} and further developed in \cite{DSSW}.  
One thinks of $\GG(\cdot)$ very roughly as a continuous version of the invariant $\G(\cdot) - 1$; it is defined to be $+\infty$ only when the algebra is not finitely-generated.  In the interest of economy we simply quote the facts we need about $\GG(\cdot)$, referring the reader to \cite[Chapter 16]{SS2} for a full treatment (including the definition).

We thank Stuart White for his suggestions on organizing this argument.

\begin{theorem} \label{T:shen}
Let $\M$ be a countably-generated $\text{II}_1$ factor.  We allow the value $\GG(\M)=+\infty$ in the (in)equalities below, with obvious interpretations.
\begin{enumerate}
\item \textsc{Bounds}. The minimal cardinality of a set of self-adjoint generators for $\M$ lies between $2\GG(\M) + 1$ and $2\GG(\M)+ 2$, inclusive $($\cite[Corollary 5.7]{DSSW}$)$.
\item \textsc{Scaling}.  For $t \in \mathbb{R}_+$, $\GG(\M_t) = \frac{\GG(\M)}{t^2}$ $($\cite[Theorem 4.5]{DSSW}$)$.  Here $\M_t$ is the usual amplification: the $\text{II}_1$ factor well-defined up to isomorphism as $p(\mathbb{M}_n \otimes \M)p$, for any $n \in \mathbb{N}$ and projection $p \in \mathbb{M}_n \otimes \M$ satisfying $\tau(p) = t/n$.
\item \textsc{Continuity}.  If $\M_1 \subseteq \M_2 \subseteq \dots \M$ are $\text{II}_1$ subfactors of $\M$ such that $\M = W^*(\cup \M_n)$ and $\GG(\M_n) = 0$ for all $n$, then $\GG(\M) = 0$ $(\sim$\cite[Theorem 5.5]{Shen}$)$.
\end{enumerate}
\end{theorem}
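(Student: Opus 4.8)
The plan divides according to the three clauses, and the honest answer is that most of the work is cited rather than reproved. Parts (1) and (2) are verbatim quotations from \cite{DSSW}: the generator bounds of (1) are \cite[Corollary 5.7]{DSSW} and the scaling identity of (2) is \cite[Theorem 4.5]{DSSW}, so for those I would simply invoke the cited statements with no further argument. It is worth recording \emph{why} the bounds in (1) take the form $2\GG(\M)+1$ to $2\GG(\M)+2$: a single non-self-adjoint generator splits into two self-adjoint parts, so heuristically a minimal self-adjoint generating set has size about $2\G(\M)$, and since $\GG(\cdot)$ behaves like a continuous version of $\G(\cdot)-1$ one expects roughly $2\GG(\M)+2$, with the lower value $2\GG(\M)+1$ accounting for the case in which one self-adjoint generator may be absorbed.

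The only clause needing genuine work is (3), which is why it carries the qualifier ``$\sim$'': it is not a verbatim match with \cite[Theorem 5.5]{Shen} but should be extracted from it. My approach would be to fix a finite generating set for $\M$ and, using that $\M = W^*(\cup \M_n)$ forces $\cup \M_n$ to be $\sigma$-strongly dense, approximate these generators in the relevant trace-norm sense by elements lying in some single $\M_{n_0}$. Because each $\M_n$ is assumed to be a $\text{II}_1$ subfactor, $\GG(\M_n)$ is defined and equal to $0$, so $\M_n$ admits microstate approximations whose covering numbers grow slowly at the rate defining $\GG$. I would then transfer these efficient coverings along the approximation to a generating tuple for $\M$ and pass to the limit to conclude $\GG(\M)=0$.

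The main obstacle is precisely this limit step: one must guarantee that the covering estimates survive the passage from the dense subalgebra to $\M$ uniformly, i.e.\ that the quantity defining $\GG(\cdot)$ does not jump upward along the increasing chain $\M_1 \subseteq \M_2 \subseteq \cdots$. This is exactly the continuity phenomenon supplied by \cite[Theorem 5.5]{Shen}; the delicate point is checking that its hypotheses are met by the present setup and that no extra factoriality or approximation-regularity assumption is being used implicitly. A secondary check is that the amplification conventions here agree with those underlying the scaling formula in (2), so that all three parts can be combined without ambiguity in the subsequent reduction to the finitely-generated case.
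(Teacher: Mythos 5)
Your treatment coincides with the paper's: Theorem \ref{T:shen} is proved there by citation alone --- (1) and (2) are quoted verbatim from \cite[Corollary 5.7]{DSSW} and \cite[Theorem 4.5]{DSSW}, and (3) is simply attributed (via the ``$\sim$'') to \cite[Theorem 5.5]{Shen} with no further argument supplied, the paper having explicitly declined even to recall the definition of $\GG(\cdot)$ --- so invoking the same references is exactly what the paper does. One caveat worth noting: your heuristic sketch of (3) describes $\GG$ via ``microstate approximations'' and ``covering numbers,'' which is the language of free entropy dimension rather than of Shen's invariant (which is defined through $\|\cdot\|_2$-approximate decompositions of generating tuples relative to finite families of projections), but since you ultimately defer the continuity step to \cite[Theorem 5.5]{Shen}, just as the paper does, this misdescription does not affect the correctness of the citation-based argument.
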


\begin{theorem} \label{T:finite}
The generator problem is equivalent to deciding whether all finitely-generated $\text{II}_1$ factors are singly-generated.
\end{theorem}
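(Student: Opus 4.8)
The plan is to prove the two implications separately. One direction is immediate: a finitely-generated $\text{II}_1$ factor is countably-generated and $\sigma$-finite, hence separably-acting by \eqref{E:int}, so a positive answer to the generator problem settles the finitely-generated factor question affirmatively. The content is the converse. For it, I would first reduce the generator problem to separably-acting $\text{II}_1$ factors via \cite[Corollary 2]{W} (already invoked in the introduction), and then show that \emph{if} every finitely-generated $\text{II}_1$ factor is singly-generated, \emph{then} every separably-acting $\text{II}_1$ factor is singly-generated. All the work is carried out with Shen's invariant $\GG$ from Theorem \ref{T:shen}.

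The first step is to set up the dictionary between single generation and $\GG$. A single generator splits into self-adjoint real and imaginary parts, so a singly-generated factor has at most two self-adjoint generators, and the lower bound of Theorem \ref{T:shen}(1) forces $\GG(\M) \le 1/2$; conversely $\GG(\M) = 0$ produces at most two self-adjoint generators $x_1, x_2$ via the upper bound, whence $x_1 + i x_2$ is a single generator. So $\GG(\M) = 0$ implies $\M$ is singly-generated, which in turn implies $\GG(\M) \le 1/2$.

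The key step is to upgrade $\GG \le 1/2$ to $\GG = 0$ by amplification. Assume every finitely-generated $\text{II}_1$ factor is singly-generated, and suppose for contradiction that some finitely-generated $\text{II}_1$ factor $\M$ has $\GG(\M) = c > 0$. The scaling formula $\GG(\M_t) = \GG(\M)/t^2$ of Theorem \ref{T:shen}(2) in particular keeps a finite value of $\GG$ finite, so every amplification $\M_t$ (including compressions, $t < 1$) is again a finitely-generated $\text{II}_1$ factor and hence singly-generated by hypothesis. But choosing $t$ with $t^2 < 2c$ gives $\GG(\M_t) = c/t^2 > 1/2$, contradicting the dictionary; thus $\GG(\M) = 0$ for every finitely-generated $\text{II}_1$ factor. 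To finish, let $\N$ be an arbitrary separably-acting $\text{II}_1$ factor with countable generating set $\{a_n\}$, and fix an irreducible hyperfinite subfactor $\R \subseteq \N$ (which exists by a theorem of Popa and is singly-generated, say $\R = W^*(r)$). Put $\M_n = W^*(r, a_1, \dots, a_n)$: each is finitely-generated, and it is a factor since $\z(\M_n) = \M_n' \cap \M_n \subseteq \M_n' \cap \N \subseteq \R' \cap \N = \mathbb{C}$. These form an increasing tower of finitely-generated $\text{II}_1$ subfactors with $\N = W^*(\bigcup_n \M_n)$ and $\GG(\M_n) = 0$, so the continuity property Theorem \ref{T:shen}(3) gives $\GG(\N) = 0$, and $\N$ is singly-generated. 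Together with \cite[Corollary 2]{W}, every separably-acting von Neumann algebra is then singly-generated, completing the converse.

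The main obstacle I anticipate is the amplification step: single generation only yields $\GG \le 1/2$, not $\GG = 0$, so one genuinely needs the scaling law applied to arbitrarily small $t$, and one must know that the scaling formula carries finite generation to the compressions $\M_t$ with $t < 1$ — this is exactly what makes those compressions eligible for the hypothesis, and it is not at all obvious without Shen's theory. The factoriality of the tower $\M_n$ is a secondary point, cleanly resolved by taking the hyperfinite subfactor irreducible.
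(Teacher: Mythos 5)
Your proof is correct and is essentially the paper's own argument: the same three properties of Shen's invariant from Theorem \ref{T:shen} (bounds as the dictionary with single generation, scaling to rule out finite nonzero values of $\GG$, continuity along an increasing tower of finitely-generated subfactors built over Popa's irreducible hyperfinite subfactor), combined with the reduction to $\text{II}_1$ factors via \cite[Corollary 2]{W}. The only difference is presentational: the paper packages the scaling step as determining that the range of $\GG$ on countably-generated $\text{II}_1$ factors is either $\{0\}$ or $[0,+\infty]$, whereas you run it as a direct contradiction by amplifying a putative value $\GG(\M)=c>0$ past $1/2$.
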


\begin{proof}
We first ask: \textit{What is the range of the invariant $\GG$ on countably-generated $\text{II}_1$ factors?}  In all cases where $\GG$ has been computed, the value is zero.  By Theorem \ref{T:shen}(2), it either attains all finite nonzero values or none.  If none, we claim that it does not attain the value $+\infty$ either.  For let $\M$ be an arbitrary countably-generated $\text{II}_1$ factor, and let $\{x_n\}_{n=1}^\infty$ generate $\M$.  We may choose $x_1$ so that $W^*(x_1)$ is an irreducible hyperfinite subfactor of $\M$, i.e., $W^*(x_1)' \cap \M = \mathbb{C}$ (\cite[Corollary 4.1]{Popa}).  Now for $n \in \mathbb{N}$, set $\M_n = W^*(\{x_1, \dots x_n\})$.  Each $\M_n$ is a $\text{II}_1$ factor, because any central projection has to commute with $x_1$.  And by Theorem \ref{T:shen}(1), $\GG(\M_n) \leq n - \frac12$, so the assumption that $\GG$ attains no nonzero finite values implies $\GG(\M_n)= 0$ for all $n$.  From Theorem \ref{T:shen}(3) we conclude that $\GG(\M)=0$.

Therefore the range in question is either $\{0\}$ or $[0, +\infty]$.  By Theorem \ref{T:shen}(1), this implies that the range of the $\G$ invariant on countably-generated $\text{II}_1$ factors is either $\{1\}$ or $\{1, 2, \dots, \aleph_0\}$.

Now consider the nontrivial direction in the statement to be proved.  If all finitely-generated $\text{II}_1$ factors are singly-generated, then from the previous paragraph we know that the range of $\G$ on countably-generated $\text{II}_1$ factors is $\{1\}$.  This implies a positive answer to the generator problem, as mentioned in the Introduction.
\end{proof}

\section{Cardinal invariants and direct sums} \label{S:sums}

In this section we will see that the inequality $\Dc(\M) \leq \C^{\text{gen}(\M)}$ essentially determines which pairs of cardinals arise as $(\G(\M), \Dc(\M))$.  As we have seen, these two invariants determine many others.  For cardinals $\kappa$ and $\lambda > 1$, $\log_\lambda(\kappa)$ denotes the least nonzero cardinal $\mu$ such that $\lambda^\mu \geq \kappa$.

\begin{theorem} \label{T:dsum}
Let $\{\M_i\}_{i \in I}$ be a family of von Neumann algebras.  The invariants $\chi_r$ and $\Dc$ are additive on direct sums in the sense that $\chi_r(\sum^\oplus \M_i) = \sum \chi_r(\M_i)$ and $\Dc(\sum^\oplus \M_i) = \sum \Dc(\M_i)$.  The invariant $\G$ is only subadditive and follows the formula
\begin{equation} \label{E:gen}
\G\left(\sum{}^\oplus \M_i \right) = \max\{\log_\C(|I|), \sup \G(\M_i)\}.
\end{equation}
\end{theorem}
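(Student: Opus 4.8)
The plan is to handle the three assertions in turn: first the additivity of $\chi_r$ and $\Dc$ (the ``easy'' facts already invoked in Theorem~\ref{T:estimate}), and then the formula for $\G$, which I would derive from them together with the bound $\Dc(\M) \leq \C^{\G(\M)}$. Write $\M = \sum^{\oplus}_I \M_i$. For $\chi_r$: a direct sum of faithful representations $\M_i \subseteq \B(\h_i)$ with $\dim \h_i = \chi_r(\M_i)$ gives $\chi_r(\M) \leq \sum_i \chi_r(\M_i)$; conversely any faithful representation on $\h$ decomposes as $\h = \bigoplus_i e_i \h$, where $\M_i = e_i \M$ acts faithfully on $e_i \h$, so $\dim \h = \sum_i \dim(e_i \h) \geq \sum_i \chi_r(\M_i)$. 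For $\Dc$: orthogonal families in the $\M_i$, pushed into $\M$ through the orthogonal central projections $e_i$, stay mutually orthogonal and yield $\Dc(\M) \geq \sum_i \Dc(\M_i)$; conversely, given an orthogonal family $\{q^{(k)}\}_{k \in K}$ with coordinates $q^{(k)}_i$, for each fixed $i$ the nonzero $q^{(k)}_i$ are orthogonal in $\M_i$, so counting the pairs $(k,i)$ with $q^{(k)}_i \neq 0$ bounds $|K| \leq \sum_i \Dc(\M_i)$.

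Now set $\kappa = \max\{\log_\C(|I|), \sup_i \G(\M_i)\}$. For the lower bound $\G(\M) \geq \kappa$, note that $x \mapsto e_i x$ is a surjective normal $*$-homomorphism $\M \to \M_i$, and the image of a generating set under a surjective normal homomorphism is again generating (its $\sigma$-weakly dense $*$-algebra maps onto a $\sigma$-weakly dense $*$-algebra); hence $\G(\M_i) \leq \G(\M)$ and $\sup_i \G(\M_i) \leq \G(\M)$. Moreover the $\{e_i\}$ are $|I|$ pairwise orthogonal nonzero projections, so $|I| \leq \Dc(\M) \leq \C^{\G(\M)}$ by Theorem~\ref{T:estimate}(2), which says precisely $\log_\C(|I|) \leq \G(\M)$.

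The substance is the upper bound, whose heart is the abelian case. I claim that for the atomic algebra $\ell^\infty_I = W^*(\{e_i\})$ a family $\{f_\alpha\} \subseteq \ell^\infty_I$ generates if and only if it \emph{separates the points of} $I$: the projections in $W^*(\{f_\alpha\})$ form a complete Boolean subalgebra of $\mathcal{P}(I)$ on whose atoms each $f_\alpha$ is (as a $\sigma$-weak limit of indicator combinations) constant, so separation forces the atoms to be singletons and thus $W^*(\{f_\alpha\}) = \ell^\infty_I$. Since a separating family of size $\lambda$ is exactly an injection $I \hookrightarrow \mathbb{C}^\lambda$, which exists iff $\C^\lambda \geq |I|$, this gives $\G(\ell^\infty_I) = \log_\C(|I|)$. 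Choosing such an injection into $[0,1]^\lambda$ produces bounded central elements $\{c_\beta\}_{\beta < \log_\C(|I|)}$ with $W^*(\{c_\beta\}) = \ell^\infty_I$. Next, for each $\beta < \kappa$ put $y_\beta = (y_{i,\beta})_i$, where $\{y_{i,\beta}\}_{\beta < \kappa}$ generates $\M_i$ (padding, since $\G(\M_i) \leq \kappa$). Once the $c_\beta$ recover all $e_i$, the products $e_i y_\beta = y_{i,\beta}$ generate each block, so $\{c_\beta\} \cup \{y_\beta\}$ generates $\M$.

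To reach the \emph{exact} value $\kappa$ rather than $\log_\C(|I|) + \kappa$, I would fold the central generators into the block generators exactly as in Lemma~\ref{T:SS}. Splitting into real and imaginary parts yields $2\log_\C(|I|)$ central self-adjoints (each commuting with everything) and $2\kappa$ block self-adjoints. Since $\log_\C(|I|) \leq \kappa$, each central self-adjoint can be paired with a distinct block self-adjoint, and two commuting self-adjoints generate a singly-generated abelian algebra (von Neumann's theorem), so each pair collapses to one self-adjoint. This leaves $2\kappa$ self-adjoint generators, repackaged as $\kappa$ elements via $s + it$. For infinite $\kappa$ the folding is unnecessary, as $\log_\C(|I|) + \kappa = \kappa$ already; its real purpose is the finite case, where it generalizes Lemma~\ref{T:sum}. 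The step I expect to be the main obstacle is making the separation/generation dichotomy for $\ell^\infty_I$ fully rigorous in the nonseparable setting, where density characters and spectral projections are less transparent; the rest is cardinal arithmetic together with the surjective-homomorphism and self-adjoint-folding tricks.
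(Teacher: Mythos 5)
Your proposal is correct and follows essentially the same route as the paper: identical arguments for the additivity of $\chi_r$ and $\Dc$, the same lower bounds via the quotient maps $x \mapsto e_i x$ and via $|I| \leq \Dc(\M) \leq \C^{\text{gen}(\M)}$ from \eqref{E:estimate}, the same point-separation count giving $\G(\ell^\infty_I) = \log_\C(|I|)$, and the same padding-plus-folding construction for the upper bound. The only cosmetic differences are that you prove the separation/generation equivalence for $\ell^\infty_I$ directly (via the complete atomic Boolean algebra of projections) where the paper cites \cite[Proposition 6.1.3]{We}, and that your self-adjoint pairing of central with block generators simply inlines the proof of Lemma \ref{T:SS}, which the paper invokes as a black box.
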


\begin{proof}
Since each $\M_i$ can be represented faithfully on $\ell^2_{\chi_r(\M_i)}$, clearly $\sum^\oplus \M_i$ can be represented faithfully on $\oplus \ell^2_{\chi_r(\M_i)}$, which has dimension $\sum \chi_r(\M_i)$.  On the other hand, if $\sum^\oplus \M_i$ acts faithfully on $\h$, then each $\M_i = e_i (\sum^\oplus \M_i)$ acts faithfully on $e_i \h$, which therefore has dimension $\geq \chi_r(\M_i)$, entailing that $\dim \h = \sum \dim (e_i \h) \geq \sum \chi_r(\M_i)$.

The additivity of $\Dc$ is only slightly less straightforward.  For each $j \in I$ let $\{p_\alpha^j\}_{\alpha < \text{dec}(\M_j)} \subset \M_j$ be nonzero projections summing to $1_{\M_j}$.  For each $j \in I$ and $\alpha < \Dc(\M_j)$ consider the projection $(\delta_{ij} p^j_\alpha)_i \in \sum^\oplus \M_i$; this family shows that $\Dc(\sum^\oplus \M_i) \geq \sum \Dc(\M_i)$.  For the opposite inequality, let $\{q_\beta\}_{\beta \in J} \subset \sum^\oplus \M_i$ be nonzero projections summing to 1. For each $\beta$, $\sum_i e_i q_\beta = q_\beta$, so in particular $e_i q_\beta \neq 0$ for at least one $i$.  Then the nonzero projections in $\{e_i q_\beta\}_{i,\beta}$ sum to 1 and have cardinality $\geq |J|$.  Also for each $i$, the identity $\sum_\beta e_i q_\beta = e_i$ implies $|\{\beta \, | \, e_i q_\beta \neq 0\}| \leq \Dc(\M_i)$.  Finally,
$$|J| \leq |\{(i,\beta) \, | \, e_i q_\beta \neq 0\}| = \sum_i |\{\beta \, | \, e_i q_\beta \neq 0\}| \leq \sum_i \Dc(\M_i).$$

Before proving \eqref{E:gen} in generality, we handle the subcase when all $\M_i = \mathbb{C}$ and so $\sum^\oplus \M_i \simeq \ell^\infty_I$.  We simply need enough generators to separate the points of the underlying topological space $I$ (\cite[Proposition 6.1.3]{We}).  Any element of $\ell^\infty_I$ partitions the space into at most $\C$ equivalence classes as inverse images of single complex numbers.  Thus $\lambda$ elements can create up to $\C^\lambda$ equivalence classes.  Separating the points means that each equivalence class is at most a singleton, so $\lambda$ has to be large enough to satisfy $\C^\lambda \geq |I|$.

The remainder of the argument consists of establishing three inequalities.

\underline{$\G(\sum^\oplus \M_i) \ge \sup \G(\M_i)$}: If $\{x_\alpha\}$ generates $\sum^\oplus \M_i$, then $\{e_i x_\alpha\}_\alpha$ must generate $\M_i$.

\underline{$\G(\sum^\oplus \M_i) \ge \log_\C (|I|)$}: This follows readily from the computation $|I| \leq \Dc\left(\sum{}^\oplus \M_i \right) \leq \C^{\text{gen}\left(\sum^\oplus \M_i \right)}$, based on \eqref{E:estimate}.

\underline{$\G(\sum^\oplus \M_i) \leq \max\{ \sup \G(\M_i), \log_\C(|I|)\}$}: Each $\M_j$ can be generated by a set of contractions $\{y_\alpha^j\}_{\alpha < \sup \text{gen}(\M_i)}$.  For $\alpha < \sup \G(\M_i)$, set $x_\alpha = (y_\alpha^i)_i$.  Let $\{z_\beta\}_{\beta < \log_\C(|I|)}$ generate $W^*(\{e_i\}) \simeq \ell^\infty_I$, as explained at the beginning of this argument.  Then $S = \{x_\alpha\} \cup \{z_\beta\}$ is a generating set for $\sum^\oplus \M_i$ of cardinality $(\sup \G(\M_i)) + \log_\C(|I|)$.   For indices $\gamma < \min\{\log_\C(|I|), \sup_i \G(\M_i)\}$, $x_\gamma$ and $z_\gamma$ commute, since $z_\gamma$ belongs to the center: then $W^*(x_\gamma, z_\gamma)$ is singly-generated by Lemma \ref{T:SS}.  Replacing the doubletons $\{x_\gamma, z_\gamma\}$ in $S$ by singletons gives the conclusion.
\end{proof}

\begin{example} \label{T:LG} To belong to region $E$ of Figure 1, a von Neumann algebra $\M$ must have $\G(\M) > \aleph_0$ and $\Dc(\M) = \aleph_0$. The second condition is guaranteed by the existence of a faithful normal state.  Here are some examples; the main novelty probably lies in the technique of (1), the generality of (2), and the reference for (3).
\begin{enumerate}
\item Let $G$ be an ICC group, i.e., an infinite discrete group in which all non-identity conjugacy classes are infinite.  There are ICC groups of any infinite cardinality $\kappa$, for instance $\mathbb{F}_\kappa$, the free group on $\kappa$ letters.  It is well-known that when $G$ is ICC, the group von Neumann algebra $L(G)$ is a $\text{II}_1$ factor.  Using this and \eqref{E:estimate} (including the the fact, noted in its proof, that $\M_*$ and $L^2(\M)$ are always homeomorphic),
    \begin{align*}
    \aleph_0 \cdot \G(L(G)) &= \Dc(L(G)) \cdot \G(L(G)) = \D(L(G)_*) \\ &= \D(L^2(L(G))) = \dim L^2(L(G)) = \dim L^2(G) = |G|.
    \end{align*}
    In particular, $\G(L(\F_\kappa)) = \kappa$ when $\kappa > \aleph_0$.  (For the group of finite permutations of an uncountable set, the same conclusion was obtained by different methods in \cite[Proposition I.1]{Be2}.)

\item Let $\{(\M_i, \varphi_i)\}_{i \in I}$ be an infinite family of nontrivial von Neumann algebras equipped with faithful normal states, and assume that either $I$ or $\sup \G(\M_i)$ is uncountable.  Consider the tensor product $\M = \bar{\otimes} (\M_i, \varphi_i)$ with its faithful normal state $\varphi = \otimes \varphi_i$ (\cite[Section III.3.1]{B}), and identify each $\M_i$ with its image in $\M$ under the canonical inclusion.  We claim that
    \begin{equation} \label{E:ten}
    \G(\M) = |I| \cdot \sup \G(\M_i) = \sum \G(\M_i).
    \end{equation}
    The second equality follows from elementary estimates of the sum: $|I|, \: \sup \G(\M_i) \leq \sum \G(\M_i) \leq |I| \cdot \sup \G(\M_i)$, and by assumption one of $|I|$ and $\sup \G(\M_i)$ is infinite.  The first equality requires a little more assembly.  

    It is obvious that $\G(\M) \leq \sum \G(\M_i)$, by taking the union of generating sets for the $\M_i$.

    Also observe that for each $i$, the slice map $S_i$ corresponding to the normal faithful state $\otimes_{j \neq i} \varphi_j$ on $\bar{\otimes}_{j \neq i} (\M_j, \varphi_j)$ is a normal conditional expectation from $\M$ onto $\M_i$ (\cite[Section III.2.2.6]{B}).  It follows that no $\M_i = S_i(\M)$ can have greater $\sigma$-weak density character (=$\sigma$-strong density character, see Remark \ref{T:dc}) than $\M$.  This gives $s\text{-}\D(\M) \geq \sup s\text{-}\D(\M_i)$.

    Any element of $\M_{\leq 1}$ is a $\sigma$-strong limit of finite linear combinations of finite tensors, which we may assume by Kaplansky density to belong to $\M_{\leq 1}$.  On $\M_{\leq 1}$ the $\sigma$-strong topology is generated by the norm $\|x\|_\varphi = \varphi(x^*x)^{1/2}$, so it suffices to consider limits of \textit{sequences}.  Suppose $x_n \to x$ strongly, where each $x_n$ is a finite linear combination of finite tensors as above.  Then for each $n$, $S_i(x_n)$ is a scalar for all but finitely many $i$.  Thus $S_i(x) = s\text{-}\lim S_i(x_n)$ is a scalar for all but countably many $i$.  Since $S_i$ is normal, any $\sigma$-weakly dense set must have elements that expect onto non-scalars in each $\M_i$.  It follows that when $I$ is uncountable, $s\text{-}\D(\M) \geq |I|$.  Of course this inequality is also valid when $I$ is countable.

    Putting the conclusions of the previous three paragraphs together with $s\text{-}\D(\N) = \aleph_0 \cdot \G(\N)$ from Theorem \ref{T:estimate}(1) and the second equality from \eqref{E:ten}, we get
    \begin{align}
\label{E:ten2} |I| \cdot \sup s\text{-}\D(\M_i) &\leq  s\text{-}\D(\M) = \aleph_0 \cdot \G(\M) \leq \aleph_0 \cdot \sum \G(\M_i) \\ \notag &= \aleph_0 \cdot |I| \cdot \sup \G(\M_i) = |I| \cdot \sup s\text{-}\D(\M_i).
\end{align}
    All terms of \eqref{E:ten2} are therefore equal.  By the assumption that $I$ or $\sup \G(\M_i)$ is uncountable, the $\aleph_0$ factors can be dropped, giving the first equality in \eqref{E:ten}.

    If $I$ and $\sup \G(\M_i)$ are countable, we can only give the estimate $\G(\M) \leq \sup \G(\M_i)$.  For each $i \in I$, let $\{x_j^i\}_{j < \sup \text{gen}(\M_i)}$ generate $\M_i$.  Then for each $j < \sup \G(\M_i)$, the family $\{x_j^i\}_i$ is commuting, so by Lemma \ref{T:SS}, $W^*(\{x_j^i\}_i)$ is generated by some $y^j$.  Then $\M = W^*(\{\M_i\}) = W^*(\{x_j^i\}_{i,j}) = W^*(\{y^j\}_{j < \sup \text{gen}(\M_i)})$.

    We discuss (finite) tensor products of non-$\sigma$-finite von Neumann algebras, with no reference states, in Section \ref{S:mm}.


\item A tracial ultrapower of a $\text{II}_1$ factor is a $\text{II}_1$ factor (so $\sigma$-finite) that is not countably-generated.  This follows from a more general theorem proved in \cite{F} in 1956(!) -- well before ultrapower terminology was introduced in operator algebras.  See \cite[Remark 4.4 and proof of Proposition 4.3]{Popa} for the fact that a tracial ultrapower of $L^\infty[0,1]$, which has cardinality $\C$ as a quotient of $\ell^\infty(L^\infty[0,1])$, is not countably-generated.
\end{enumerate}
Note that the examples in (2) and (3) include abelian algebras.
\end{example}

Example \ref{T:LG} shows that $\G(\M)$ is not bounded by any function of $\Dc(\M)$.  One can manufacture examples with $\Dc(\M)$ strictly larger than $\G(\M)$ by exploiting the distinction between additivity and subadditivity on direct sums (Theorem \ref{T:dsum}, simple examples are $\M = \ell^\infty_{\C^\kappa}$ for any $\kappa$), but the gap is restricted by the inequality $\Dc(\M) \leq \C^{\text{gen}(\M)}$ from \eqref{E:estimate}.  This turns out to be nearly the whole story.

\begin{theorem} \label{T:range} ${}$
\begin{enumerate}
\item For any pair of cardinals $\kappa_g$ and $\kappa_d$ satisfying
\begin{equation} \label{E:bounds}
\kappa_g > \aleph_0 \qquad \text{ and } \qquad \aleph_0 \leq \kappa_d \leq \C^{\kappa_g},
\end{equation}
there is a von Neumann algebra $\M$ with $\G(\M) = \kappa_g$ and $\Dc(\M) = \kappa_d$.
\item The range of the von Neumann algebra invariant $\M \mapsto (\G(\M), \Dc(\M))$ is the union of the following three sets:
\begin{enumerate}
\item all values allowed by \eqref{E:bounds};
\item $\{(1, \kappa) \mid 1 \leq \kappa \leq \C\}$;
\item either $\varnothing$, or $[2, \aleph_0] \times [\aleph_0,\C]$.
\end{enumerate}
(The generator problem asks whether the third set is $\varnothing$.)
\end{enumerate}
\end{theorem}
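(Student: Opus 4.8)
The plan is to treat both parts with a common toolkit: the additivity of $\Dc$ and the formula \eqref{E:gen} for $\G$ on direct sums (Theorem \ref{T:dsum}), together with the inequality $\Dc(\M) \leq \C^{\text{gen}(\M)}$ from \eqref{E:estimate}. The two building blocks I would use are the free group factors $L(\F_\kappa)$, which by Example \ref{T:LG}(1) have $\G = \kappa$ and (being $\text{II}_1$ factors, hence $\sigma$-finite) $\Dc = \aleph_0$ for uncountable $\kappa$, and the abelian algebras $\ell^\infty_\kappa$, which have $\G(\ell^\infty_\kappa) = \log_\C(\kappa)$ and $\Dc(\ell^\infty_\kappa) = \kappa$. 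Note that $\log_\C(\kappa) = 1$ precisely when $1 \leq \kappa \leq \C$, since $\C^1 = \C$.

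For part (1), given $\kappa_g > \aleph_0$ and $\aleph_0 \leq \kappa_d \leq \C^{\kappa_g}$, I would simply set $\M = L(\F_{\kappa_g}) \oplus \ell^\infty_{\kappa_d}$. Additivity gives $\Dc(\M) = \aleph_0 + \kappa_d = \kappa_d$, and \eqref{E:gen} gives $\G(\M) = \max\{\kappa_g, \log_\C(\kappa_d)\}$. The hypothesis $\C^{\kappa_g} \geq \kappa_d$ is exactly the assertion $\log_\C(\kappa_d) \leq \kappa_g$, so $\G(\M) = \kappa_g$. This one construction realizes every pair allowed by \eqref{E:bounds}.

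For part (2) I would prove two inclusions. The containment of the range in (a)$\cup$(b)$\cup$(c) is a case split on $\G(\M)$. If $\G(\M) = 1$, then $1 \leq \Dc(\M) \leq \C^{1} = \C$ by \eqref{E:estimate}, giving set (b). If $2 \leq \G(\M) \leq \aleph_0$, then $\M$ is infinite-dimensional (finite-dimensional algebras are singly generated), so $\Dc(\M) \geq \aleph_0$, while countable generation with \eqref{E:estimate} gives $\Dc(\M) \leq \C^{\aleph_0} = \C$; this lands in the box of set (c). If $\G(\M) > \aleph_0$, then $\aleph_0 \leq \Dc(\M) \leq \C^{\G(\M)}$ is set (a). For the reverse inclusion, set (b) is realized by the algebras $\ell^\infty_\kappa$ for $1 \leq \kappa \leq \C$ (where $\ell^\infty_1 = \mathbb{C}$ and $\log_\C(\kappa) = 1$), and set (a) by part (1).

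The crux is the ``all or nothing'' dichotomy for set (c), and I expect the difficulty to lie entirely in realizing \emph{every} intermediate value $g \in [2,\aleph_0]$ of $\G$, not just one. If set (c) is nonempty, some algebra has $2 \leq \G \leq \aleph_0$; being countably generated and not singly generated, it witnesses a negative answer to the generator problem (Theorem \ref{T:main}). By Theorem \ref{T:finite} — more precisely, the dichotomy established in the course of its proof — the range of $\G$ on countably-generated $\text{II}_1$ factors is then all of $\{1, 2, \dots, \aleph_0\}$. So for each $g$ with $2 \leq g \leq \aleph_0$ there is a $\text{II}_1$ factor $\N_g$ with $\G(\N_g) = g$ and, by $\sigma$-finiteness, $\Dc(\N_g) = \aleph_0$. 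Forming $\N_g \oplus \ell^\infty_\kappa$ for $\aleph_0 \leq \kappa \leq \C$ keeps $\G = g$ (as $\log_\C(\kappa) = 1 \leq g$) while giving $\Dc = \aleph_0 + \kappa = \kappa$, so the whole box $[2,\aleph_0] \times [\aleph_0,\C]$ is realized. This is exactly where the structure theory behind Shen's invariant (Theorem \ref{T:shen}) and the reduction of Theorem \ref{T:finite} carry the argument: direct sums and tensor products let one vary $\Dc$ freely, but they cannot by themselves produce the missing generator numbers, so the dichotomy must be imported from the $\text{II}_1$ factor theory.
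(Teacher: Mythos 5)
Your proof is correct and follows essentially the same route as the paper: both rely on Theorem \ref{T:dsum}, Example \ref{T:LG}(1), the bound $\Dc(\M) \leq \C^{\text{gen}(\M)}$ from \eqref{E:estimate}, and the dichotomy from the proof of Theorem \ref{T:finite} for the all-or-nothing behavior of set (c). The only (cosmetic) difference is your choice of witnesses, $L(\F_{\kappa_g}) \oplus \ell^\infty_{\kappa_d}$ and $\N_g \oplus \ell^\infty_\kappa$, where the paper uses $\ell^\infty_{\kappa_d}(L(\F_{\kappa_g}))$ and $\ell^\infty_\mu(\M)$; the computations via \eqref{E:gen} are identical in substance.
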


\begin{proof}
(1): We claim that $\M = \ell^\infty_{\kappa_d} (L(\F_{\kappa_g}))$ works.  By Theorem \ref{T:dsum} and Example \ref{T:LG}(1), we compute
$$\Dc(\ell^\infty_{\kappa_d} (L(\F_{\kappa_g}))) = \kappa_d \cdot \Dc(L(\F_{\kappa_g})) = \kappa_d \cdot \aleph_0 = \kappa_d, \qquad\G(\ell^\infty_{\kappa_d} (L(\F_{\kappa_g}))) = \max\{\log_\C(\kappa_d ), \kappa_g\} = \kappa_g.$$


(2): It follows from part (1) and the algebras $\ell^\infty_\kappa$ that the values in (a) and (b) are attained.  Also these are the only possibilities when $\G(\M)$ is 1 or uncountable, because of the inequality $\Dc(\M) \leq \C^{\text{gen}(\M)}$ from \eqref{E:estimate}, and the fact that $\G(\M) > 1$ implies that $\M$ is infinite-dimensional and so $\Dc(\M) \geq \aleph_0$.  

We showed in the proof of Theorem \ref{T:finite} that the range of the $\G$ invariant on countably-generated $\text{II}_1$ factors is either $\{1,2, \dots , \aleph_0\}$ or $\{1\}$, and the latter case implies that $\G$ takes no values in $[2,\aleph_0]$ on any algebra.  In the former case, choose any $(\lambda, \mu) \in [2, \aleph_0] \times [\aleph_0, \C]$.  By assumption there is a $\text{II}_1$ factor $\M$ with $\G(\M) = \lambda$; then $\G(\ell^\infty_\mu(\M)) = \lambda$ and $\Dc(\ell^\infty_\mu(\M)) = \mu$ by Theorem \ref{T:dsum}.
\end{proof}




\begin{remark}
Note that in Theorem \ref{T:range}, $L(\F_{\kappa_g})$ could be replaced with any algebra with the same $\G$ and $\Dc$ invariants, even an abelian one (Example \ref{T:LG}(2)).  So if the generator problem has an affirmative answer, the entire range of the invariant $(\G(\M), \Dc(\M))$ is achieved on abelian algebras.
\end{remark}

\begin{remark}
As stated, the converse to Theorem \ref{T:main} is trivial.  However, looking at Figure 1, Theorem \ref{T:main} could be phrased, ``If $D$ is empty then $A$ is empty."  This statement's converse follows from the last part of Theorem \ref{T:range}.  An algebra $\M$ lying in region $D$ would have $\G(\M) \in [2, \aleph_0]$ and $\Dc(\M) = \aleph_0$; then $\G(\ell^\infty_\C(\M)) = \G(\M)$ and $\Dc(\ell^\infty_\C(\M)) = \C$ by Theorem \ref{T:dsum}, making $\ell^\infty_\C(\M)$ an element of region $A$.
\end{remark}






\section{Cardinal invariants and the center} \label{S:center}

In the previous section we built our examples satisfying $\Dc(\M) > \G(\M)$ as direct sums.  This is unavoidable, as the first part of the next proposition shows.

\begin{proposition} \label{T:center} Let $\M$ be a von Neumann algebra.
\begin{enumerate}
\item If $\Dc(\M) > \aleph_0 \cdot \G(\M)$, then $\Dc(\M) = \Dc(\z(\M))$.
\item $\aleph_0 \cdot \chi_r(\M) = \aleph_0 \cdot \Dc(\z(\M)) \cdot \G(\M)$.
\item If $\M$ has $\sigma$-finite center, then $\chi_r(\M) \leq \aleph_0 \cdot \G(\M)$, with equality when $\M$ is infinite-dimensional.
\item (Strengthening of Theorem \ref{T:estimate}(1)) $\M$ can be written as a direct sum in which each summand $\M_i$ is either some $\mathbb{M}_n$ or satisfies $\chi_r(\M_i) = \aleph_0 \cdot \G(\M_i)$.
\end{enumerate}
\end{proposition}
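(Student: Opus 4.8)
The inequalities (1)--(3) all flow from a single estimate. Applying Theorem \ref{T:estimate}(1), write $\M=\sum^\oplus_{i\in I}\M_i$ with $|I|\le\Dc(\z(\M))$ and $\chi_r(\M_i)\le\aleph_0\cdot\G(\M)$ for each $i$. Since $\Dc(\M_i)\le\chi_r(\M_i)$ by Theorem \ref{T:estimate}(2) and $\Dc$ is additive on direct sums (Theorem \ref{T:dsum}),
\begin{equation*}
\Dc(\M)=\sum_{i\in I}\Dc(\M_i)\le|I|\cdot\aleph_0\cdot\G(\M)\le\Dc(\z(\M))\cdot\aleph_0\cdot\G(\M).\tag{$\star$}
\end{equation*}
Together with the always-valid $\Dc(\z(\M))\le\Dc(\M)$ and the identity $\chi_r(\M)=\G(\M)\cdot\Dc(\M)$ of Theorem \ref{T:estimate}(2), this does essentially all of the arithmetic work.

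For (1), assume $\Dc(\M)>\aleph_0\cdot\G(\M)$. Because $\aleph_0\cdot\G(\M)$ is infinite, the right side of $(\star)$ is $\max\{\Dc(\z(\M)),\aleph_0\cdot\G(\M)\}$; the hypothesis rules out this maximum being $\aleph_0\cdot\G(\M)$, so it is $\Dc(\z(\M))$ and $\Dc(\M)\le\Dc(\z(\M))$, which with the reverse inequality gives equality. For (2), multiplying $(\star)$ through by $\aleph_0\cdot\G(\M)$ and absorbing the square of the infinite cardinal $\aleph_0\cdot\G(\M)$ yields $\aleph_0\cdot\G(\M)\cdot\Dc(\M)\le\aleph_0\cdot\G(\M)\cdot\Dc(\z(\M))$; the reverse is immediate from $\Dc(\z(\M))\le\Dc(\M)$, and substituting $\chi_r(\M)=\G(\M)\cdot\Dc(\M)$ gives the claim (the finite-dimensional case being absorbed by the leading $\aleph_0$). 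For (3), if $\Dc(\z(\M))\le\aleph_0$ then (2) reads $\aleph_0\cdot\chi_r(\M)=\aleph_0\cdot\G(\M)$, so $\chi_r(\M)\le\aleph_0\cdot\G(\M)$; and when $\M$ is infinite-dimensional $\chi_r(\M)$ is infinite, whence $\chi_r(\M)=\aleph_0\cdot\chi_r(\M)=\aleph_0\cdot\G(\M)$.

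The substance is in (4), where I would spend the most care. The plan is to split $\M$ along a partition of its center chosen for $\sigma$-finiteness. Using that every nonzero projection in the abelian algebra $\z(\M)$ dominates a nonzero projection $s$ for which $s\z(\M)$ is $\sigma$-finite --- take $s$ to be the support of any normal state on the corner --- a Zorn's lemma argument produces a maximal orthogonal family $\{z_k\}_{k\in K}$ of central projections with each $z_k\z(\M)$ $\sigma$-finite; maximality forces $\sum_k z_k=1$, so $\M=\sum^\oplus_k z_k\M$. Each summand has center $z_k\z(\M)$, which is $\sigma$-finite by construction. If $z_k\M$ is infinite-dimensional, part (3) applies to it directly and gives $\chi_r(z_k\M)=\aleph_0\cdot\G(z_k\M)$; if $z_k\M$ is finite-dimensional, it is a finite direct sum of matrix algebras, and I replace that summand by its individual blocks $\mathbb{M}_n$. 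The refined decomposition then has every summand either an $\mathbb{M}_n$ or an infinite-dimensional algebra with $\chi_r(\M_i)=\aleph_0\cdot\G(\M_i)$, as required.

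The main obstacle is pinning down the decomposition in (4). The partition must be taken with respect to $\sigma$-finiteness of the center $z_k\z(\M)$, not of $z_k\M$ itself; the latter is stronger than part (3) needs and is in general unachievable. One must also remember to break the finite-dimensional corners into single matrix blocks, since a finite direct sum such as $\mathbb{M}_2\oplus\mathbb{M}_2$ satisfies neither alternative. The one genuinely non-arithmetic point is verifying that maximality yields $\sum_k z_k=1$ --- equivalently, that no nonzero central projection avoids all $\sigma$-finite central subprojections --- and this rests on the support-of-a-normal-state observation noted above.
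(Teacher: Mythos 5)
Your proof is correct and follows essentially the same route as the paper: your inequality $(\star)$ is exactly what the paper extracts from the decomposition of Theorem \ref{T:estimate}(1) (the paper runs the computation through additivity of $\chi_r$ rather than of $\Dc$, an immaterial difference), and parts (1)--(3) are the same cardinal arithmetic, with (2) obtained directly from $(\star)$ instead of via (1). For (4) the paper's one-line proof --- split off the matricial summands and write the rest as a direct sum of summands with $\sigma$-finite center --- is precisely the Zorn's lemma/support-projection argument you spell out, so your version just supplies the details the paper omits.
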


\begin{proof}
(1): Assume $\Dc(\M) > \aleph_0 \cdot \G(\M)$ and let $\M = \sum^\oplus \M_i$ be as in Theorem \ref{T:estimate}(1).  Compute
$$\chi_r(\M) = \sum \chi_r(\M_i) \leq \Dc(\z(\M)) \cdot \aleph_0 \cdot \G(\M) \leq \Dc(\M) \cdot \aleph_0 \cdot \G(\M) = \Dc(\M) = \chi_r(\M),$$
using additivity of $\chi_r$ for the first step and \eqref{E:estimate} for the fifth.  Then
$$\aleph_0 \cdot \G(\M) < \Dc(\M) = \Dc(\z(\M)) \cdot \aleph_0 \cdot \G(\M) = \max\{\Dc(\z(\M)), \aleph_0 \cdot \G(\M)\}$$
implies that the maximum on the right is $\Dc(\z(\M))$.

(2): By \eqref{E:estimate} we have
$$\aleph_0 \cdot \chi_r(\M) = \aleph_0 \cdot \Dc(\M) \cdot \G(\M),$$
and by (1) either the right-hand side is $\aleph_0 \cdot \G(\M)$ or $\Dc(\M) = \Dc(\z(\M))$.


(3): Follows directly from part (2).

(4): Follows from part (3) by writing $\M$ as a direct sum of its matricial summands and arbitrary other summands with $\sigma$-finite center.
\end{proof}

\begin{remark} \label{T:dixmier}
We cited Dixmier's book (\cite[Exercice I.7.3bc]{D}) for the classical fact \eqref{E:int} that ``separably-acting" is the same as ``countably-generated and $\sigma$-finite," then gave the equation $\chi_r(\M) = \G(\M) \cdot \Dc(\M)$ as a generalization.  The same exercises in Dixmier also show that ``separably-acting" is equivalent to ``countably-generated and having $\sigma$-finite center," which is generalized by Proposition \ref{T:center}(2).
\end{remark}

Next we consider modified invariants that ignore the size of the center, at least in terms of decomposability.  If $\M \mapsto F(\M)$ is any cardinal invariant, its ``localization" is
$$F'(\M) = \min\{\kappa \mid \M \text{ can be written as a direct sum of algebras $\{\M_\alpha\}$ with $F(\M_\alpha) \leq \kappa$ for all $\alpha$}\}.$$

\begin{lemma}
Assume that a cardinal invariant $F$ has the regularity property $F(\M) \leq F(\M \oplus \N)$ for arbitrary $\M$ and $\N$, as all invariants in this paper do.  Then for any decomposition $\M = \sum^\oplus \M_i$,
\begin{equation} \label{E:F'}
F'(\M) = \sup \nolimits_i F'(\M_i).
\end{equation}
\end{lemma}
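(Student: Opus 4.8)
The plan is to prove the equality $F'(\M) = \sup_i F'(\M_i)$ by establishing both inequalities, exploiting the fact that decompositions can be refined and amalgamated. The key conceptual point is that $F'$ measures the coarsest cardinal bound achievable by \emph{some} direct sum decomposition, so refining a decomposition can only help (never hurt) while the regularity property $F(\M) \leq F(\M \oplus \N)$ guarantees that passing to a summand cannot increase the relevant $F$-values.

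First I would prove $F'(\M) \geq \sup_i F'(\M_i)$. Fix a witnessing decomposition $\M = \sum^\oplus_\alpha \M_\alpha$ with $F(\M_\alpha) \leq F'(\M)$ for all $\alpha$. Each coordinate projection $e_i$ is central, so for each $i$ the family $\{e_i \M_\alpha\}_\alpha$ (discarding zero summands) is a direct sum decomposition of $\M_i$. The regularity hypothesis, applied to the splitting $\M_\alpha = e_i \M_\alpha \oplus (1 - e_i)\M_\alpha$, gives $F(e_i \M_\alpha) \leq F(\M_\alpha) \leq F'(\M)$. Hence this decomposition of $\M_i$ witnesses $F'(\M_i) \leq F'(\M)$, and taking the supremum over $i$ yields the claimed inequality.

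For the reverse inequality $F'(\M) \leq \sup_i F'(\M_i)$, I would set $\kappa = \sup_i F'(\M_i)$ and, for each $i$, select a decomposition $\M_i = \sum^\oplus_\beta \M_{i,\beta}$ with $F(\M_{i,\beta}) \leq F'(\M_i) \leq \kappa$; this is possible because the infimum defining each $F'(\M_i)$ is attained (one takes a near-optimal decomposition, and since the relevant $F$ here will be $\Dc$ or $\G$ these are genuine minima). Amalgamating over all $i$ produces a single decomposition $\M = \sum^\oplus_{i,\beta} \M_{i,\beta}$ in which every summand satisfies $F(\M_{i,\beta}) \leq \kappa$. This witnesses $F'(\M) \leq \kappa = \sup_i F'(\M_i)$, completing the proof.

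\textbf{The main obstacle} is ensuring that the minimum in the definition of $F'$ is actually attained, so that in the reverse direction one may choose an honest decomposition for each $\M_i$ rather than merely an approximating sequence. For the invariants at issue this is unproblematic: a decomposition into summands with $F(\M_\alpha) \leq \kappa$ either exists or does not, and since $\kappa$ ranges over cardinals the minimizing $\kappa$ is realized by some decomposition. One should state this attainment explicitly, perhaps noting that $F'$ is well-defined precisely because the trivial decomposition $\M = \M$ always gives the upper bound $F(\M)$. A minor secondary point is the bookkeeping in the amalgamation step, where one must verify that the doubly-indexed family genuinely decomposes $\M$ as a direct sum; this is routine since the $e_i$ are orthogonal central projections summing to the identity and the inner decompositions refine within each block.
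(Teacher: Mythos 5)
Your proof is correct and takes essentially the same route as the paper's: one inequality by cutting a witnessing decomposition $\M = \sum^\oplus_\alpha \M_\alpha$ with the central projections $e_i$ and invoking the regularity property (your $e_i\M_\alpha$ is the paper's $\M_\alpha \cap \M_i$), the other by amalgamating witnessing decompositions of the $\M_i$ into one decomposition of $\M$. Your ``main obstacle'' about attainment of the minimum is automatic for any cardinal invariant $F$, not just $\G$ or $\Dc$: the set of cardinals witnessed by some decomposition is nonempty (the trivial decomposition witnesses $F(\M)$) and upward closed, so it has a least element by the well-ordering of cardinals.
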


\begin{proof}
For each $i$ let $\M_i = \sum^\oplus_{j \in J_i} \M_j^i$ be such that $F'(\M_i) = \sup_{j \in J_i} F(\M_j^i)$.  Then
$$\sup_i F'(\M_i) = \sup_i \sup_{j \in J_i} F(\M_j^i) \geq F'(\M),$$
since $\M$ is the direct sum of all the $\M_j^i$.  In the other direction, let $\M = \sum^\oplus \M_\alpha$ be such that $F'(\M) = \sup_\alpha F(\M_\alpha)$.  For any $i_0$ we have
$$F'(\M) = \sup_\alpha F(\M_\alpha) \geq \sup_{ \substack{\alpha, i \\ \M_\alpha \cap \M_i \neq 0}} F(\M_\alpha \cap \M_i) \geq \sup_{ \substack{\alpha \\ \M_\alpha \cap \M_{i_0} \neq 0}} F(\M_\alpha \cap \M_{i_0}) \geq F'(\M_{i_0}).$$
This implies $F'(\M) \geq \sup_i F'(\M_i)$.
\end{proof}

Here are some applications of invariants of this type.

\smallskip

\textbf{1.} The smallness criterion $\Dc'(\M) \leq \aleph_0$ means that $\M$ is a direct sum of $\sigma$-finite algebras.  It has implications for dimension theory (\cite[Proposition 3.8]{S}, where $\Dc'(\M)$ is denoted ``$\kappa_\M$").

\smallskip



\textbf{2.} The main content of Theorem \ref{T:estimate}(1) is the inequality
\begin{equation} \label{E:center}
\chi_r'(\M) \leq \aleph_0 \cdot \G(\M).
\end{equation}
Here is an improvement.

\begin{proposition} \label{T:same}
For a von Neumann algebra $\M$, we have
$$\aleph_0 \cdot \chi_r'(\M) = \aleph_0 \cdot \G'(\M).$$
Thus the invariants $\G'(\M)$ and $\chi'_r(\M)$ only differ when $\G'(\M)$ is finite and $\M$ is not atomic abelian.
\end{proposition}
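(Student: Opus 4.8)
The plan is to prove the displayed equality by sandwiching $\aleph_0 \cdot \chi_r'(\M)$ between two copies of $\aleph_0 \cdot \G'(\M)$, and then to read off the stated dichotomy by contraposition.

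First I would establish $\G'(\M) \le \chi_r'(\M)$. This is immediate from the relation $\G(\N) \le \chi_r(\N)$, valid for \emph{every} von Neumann algebra $\N$: it is one of the underlined inequalities in Theorem \ref{T:estimate}(2) in the infinite-dimensional case, it holds in the finite-dimensional case since there $\G = 1 \le \sum_k n_k = \chi_r$, and it holds for $\mathbb{C}$ precisely because of the convention $\G(\mathbb{C}) = 1$. Consequently, any decomposition $\M = \sum^\oplus \M_\alpha$ witnessing $\chi_r(\M_\alpha) \le \kappa$ for all $\alpha$ automatically witnesses $\G(\M_\alpha) \le \kappa$, so $\G'(\M) \le \chi_r'(\M)$ and in particular $\aleph_0 \cdot \G'(\M) \le \aleph_0 \cdot \chi_r'(\M)$.

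For the reverse inequality I would combine the localization formula \eqref{E:F'} with \eqref{E:center}. Fixing a decomposition $\M = \sum^\oplus \M_\alpha$ with $\G(\M_\alpha) \le \G'(\M)$ for every $\alpha$, formula \eqref{E:F'} (with $F = \chi_r$) gives $\chi_r'(\M) = \sup_\alpha \chi_r'(\M_\alpha)$, while \eqref{E:center} applied to each summand yields $\chi_r'(\M_\alpha) \le \aleph_0 \cdot \G(\M_\alpha) \le \aleph_0 \cdot \G'(\M)$. Hence $\chi_r'(\M) \le \aleph_0 \cdot \G'(\M)$, so $\aleph_0 \cdot \chi_r'(\M) \le \aleph_0 \cdot \aleph_0 \cdot \G'(\M) = \aleph_0 \cdot \G'(\M)$. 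Together with the previous paragraph this proves $\aleph_0 \cdot \chi_r'(\M) = \aleph_0 \cdot \G'(\M)$.

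For the second assertion, it suffices to show that $\G'(\M)$ and $\chi_r'(\M)$ actually coincide whenever $\G'(\M)$ is infinite or $\M$ is atomic abelian. If $\G'(\M) \ge \aleph_0$, then $\aleph_0 \cdot \G'(\M) = \G'(\M)$, and the chain $\G'(\M) \le \chi_r'(\M) \le \aleph_0 \cdot \chi_r'(\M) = \aleph_0 \cdot \G'(\M) = \G'(\M)$ forces $\chi_r'(\M) = \G'(\M)$. If $\M \simeq \ell^\infty_S$ is atomic abelian, its decomposition into copies of $\mathbb{C}$ (each with $\G = \chi_r = 1$) shows $\G'(\M) = \chi_r'(\M) = 1$. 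Thus the two invariants can differ only in the remaining case, where $\G'(\M)$ is finite and $\M$ is not atomic abelian. I do not expect a genuine obstacle: the entire content is carried by Theorem \ref{T:estimate} and \eqref{E:F'}, and the only points needing care are the idempotence of the $\aleph_0$ factor in the infinite case and the bookkeeping at the small end—keeping $\G \le \chi_r$ for $\mathbb{C}$ via the fiat $\G(\mathbb{C}) = 1$, which is exactly what places the atomic abelian algebras on the ``agree'' side.
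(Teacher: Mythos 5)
Your proposal is correct and takes essentially the same route as the paper: the same minimizing decomposition combined with \eqref{E:F'} and \eqref{E:center} yields $\aleph_0 \cdot \chi_r'(\M) \leq \aleph_0 \cdot \G'(\M)$, with the reverse inequality coming from the general fact $\G(\N) \leq \chi_r(\N)$. The only (immaterial) divergence is at the final sentence, where you verify the contrapositive cases directly, whereas the paper instead records the slightly sharper observation that $\G'(\M) > 1$ forces $\chi_r'(\M) \geq \aleph_0$; both settle the claim as stated.
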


\begin{proof}
Let $\M = \sum^\oplus \M_i$ be a decomposition such that $\G'(\M) = \sup \G(\M_i)$.  Compute
$$\aleph_0 \cdot \chi'_r(\M) = \aleph_0 \cdot \sup \chi'_r(\M_i) \leq \aleph_0 \cdot \sup \G(\M_i) = \aleph_0 \cdot \G'(\M),$$
where the first two relations are justified by \eqref{E:F'} and \eqref{E:center}, respectively.  The opposite inequality follows from the general fact $\G(\N) \leq \chi_r(\N)$ from \eqref{E:estimate}.

The necessary observation for the second sentence is $\G'(\M) > 1 \Rightarrow \chi'_r(\M) \geq \aleph_0$ (because a summand that is not singly-generated must be infinite-dimensional and so has infinite representation density).
\end{proof}

Proposition \ref{T:same} generalizes a result of Kehlet (\cite[Proposition 1]{K}), where it is shown that $\G'(\M) \leq \aleph_0 \iff \chi'_r(\M) \leq \aleph_0$.

\smallskip

\textbf{3.} We can also generalize \cite[Proposition 2]{K}, which says that if $\{\M_n\}$ is a countable set of von Neumann algebras acting on a common Hilbert space, and $\chi'_r(\M_n) \leq \aleph_0$ for each $n$, then $\chi'_r(W^*(\{\M_n\})) \leq \aleph_0$ too.  The broader fact is that for any family $\{\M_i\}_{i \in I}$ on a common Hilbert space,
$$\chi'_r(W^*(\{\M_i\})) \leq |I| \cdot \aleph_0 \cdot \sup \chi'_r(\M_i).$$
Here is the idea, not much different from \cite{K} or the proof of Theorem \ref{T:estimate}(1) above.  For any nonzero vector $\xi$ and index $i$, let $\M_i$ be decomposed into summands that are each generated by $\leq \G'(\M_i)$ elements.  All but countably many summands of $\M_i$ annihilate $\xi$, so all but $\leq \aleph_0 \cdot \G'(\M_i)$ generators of $\M_i$ annihilate $\xi$.  At most $|I| \cdot \aleph_0 \cdot \sup \G'(\M_i)$ generators of $\M$ fail to annihilate $\xi$, so the invariant subspace $\overline{\M \xi}$ has a dense set of cardinality $\leq |I| \cdot \aleph_0 \cdot \sup \G'(\M_i)$ ($=|I| \cdot \aleph_0 \cdot \sup \chi'_r(\M_i)$ by Proposition \ref{T:same}).  The rest of the argument is the same as for Theorem \ref{T:estimate}(1).



\section{Monotonicity and multiplicativity of the invariant $\G(\M)$} \label{S:mm}

We say that a cardinal invariant $F$ is \textit{monotone} if $\N \subseteq \M$ entails $F(\N) \leq F(\M)$.  (We do not require that inclusions be unital.)  It is obvious that $\Dc$ and $\chi_r$ are monotone.  What about $\G$?

\begin{theorem} \label{T:monotone} ${}$
\begin{enumerate}
\item If there exists an inclusion $\N \subseteq \M$ such that $\G(\N) > \G(\M)$, then $\M$ is finitely-generated and $\N$ is countably-generated.
\item The invariant $s$-$\D$ is monotone.
\item The generator problem is equivalent to deciding whether $\G$ is monotone.
\end{enumerate}
\end{theorem}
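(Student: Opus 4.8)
\textbf{Plan for Theorem \ref{T:monotone}.}
I would prove the three parts in order, since (1) supplies the quantitative input for (3), and (2) is a self-contained monotonicity statement that will also feed into (3). For part (1), the strategy is to prove the contrapositive: if $\M$ is not finitely-generated then $\G(\N) \leq \G(\M)$ for every subalgebra $\N$. The key observation is that the density-character computation in Theorem \ref{T:estimate}(1) makes $\aleph_0 \cdot \G(\cdot)$ equal to $s\text{-}\D(\cdot)$, and density characters behave much better under inclusion than the raw generator count. So I would first establish part (2) and then leverage it: given $\N \subseteq \M$, monotonicity of $s\text{-}\D$ gives $\aleph_0 \cdot \G(\N) = s\text{-}\D(\N) \leq s\text{-}\D(\M) = \aleph_0 \cdot \G(\M)$. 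When $\M$ is infinite-dimensional this forces $\G(\N) \leq \aleph_0 \cdot \G(\M) = \G(\M)$ except possibly when $\G(\M)$ is finite (so that the $\aleph_0$ factor genuinely enlarges it). Tracking exactly when the $\aleph_0$ factor matters — namely when $\G(\M) < \aleph_0$ but $\G(\N) = \aleph_0$, i.e.\ $\M$ finitely-generated and $\N$ countably-generated — yields precisely the stated exceptional regime.

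For part (2), I would mimic the argument already carried out in the underlined step ``$\G(\M) \leq \chi_r(\M)$'' of Theorem \ref{T:estimate}(2), which in fact proved the representation-specific statement $s\text{-}\D(\M_{\leq 1}) \leq s\text{-}\D(\B(\ell^2_\kappa)_{\leq 1})$ whenever $\M \subseteq \B(\ell^2_\kappa)$. The same multi-index selection argument works verbatim for an arbitrary inclusion $\N \subseteq \M$: fix a $\sigma$-strongly dense set in $\M_{\leq 1}$ of cardinality $s\text{-}\D(\M_{\leq 1})$, index the basic strong-neighborhood data by triples $(\alpha, F, n)$, and for each such triple select (if possible) a point of $\N_{\leq 1}$ in the corresponding neighborhood. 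A triangle-inequality estimate identical to the one in the proof of Theorem \ref{T:estimate}(2) shows the selected set is $\sigma$-strongly dense in $\N_{\leq 1}$, giving $s\text{-}\D(\N_{\leq 1}) \leq s\text{-}\D(\M_{\leq 1})$; passing between $\M_{\leq 1}$ and $\M$ via the Kaplansky and rational-rescaling observations from that same proof completes the monotonicity claim.

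Part (3) is where the equivalence is assembled. The forward direction uses part (1): a positive answer to the generator problem (equivalently, to Question \ref{T:gen2} via Theorem \ref{T:main}) collapses the only obstruction to monotonicity, since any potential witness $\N \subseteq \M$ to failure of monotonicity would need $\M$ finitely-generated and $\N$ countably-generated, hence both singly-generated under the hypothesis, so $\G(\N) = \G(\M) = 1$ and no strict inequality can occur. For the reverse direction I would argue that monotonicity of $\G$ forces a positive answer: take any countably-generated von Neumann algebra $\N$ and embed it, as a subalgebra, into some singly-generated algebra $\M$; monotonicity then gives $\G(\N) \leq \G(\M) = 1$, so $\N$ is singly-generated, which by Theorem \ref{T:main} is the generator problem. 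The main obstacle — and the step I would think hardest about — is producing, for an arbitrary countably-generated $\N$, a concrete singly-generated \emph{over}-algebra $\M \supseteq \N$; the natural candidate is to adjoin $\N$ into a properly infinite or type~I amplification (e.g.\ $\N \hookrightarrow \B(\ell^2) \bar{\otimes} \N$ or $\N \hookrightarrow \M_\infty(\N)$), which is singly-generated by the known properly-infinite result \cite[Theorem 2]{Wo}, so that monotonicity transfers single generation back down to $\N$.
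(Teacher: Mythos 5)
The logical order of your proposal is inverted relative to the actual dependencies, and the inversion is precisely where it breaks. You prove part (2) first by asserting that the multi-index selection argument from the step $\G(\M) \leq \chi_r(\M)$ in Theorem \ref{T:estimate}(2) ``works verbatim for an arbitrary inclusion $\N \subseteq \M$.'' It does not. That argument requires the $\sigma$-strong topology on $\M_{\leq 1}$ to be generated by a family of seminorms of cardinality at most $s$-$\D(\M_{\leq 1})$; this is special to $\B(\ell^2_\kappa)$, where the density of the ball and the number of seminorms $p_\beta$ are both $\kappa$. For a general $\M$, the smallest available generating family (e.g.\ $\|\cdot\|_{\varphi_i}$ for normal states $\varphi_i$ whose $\sigma$-finite supports sum to $1$) has cardinality on the order of $\chi_r(\M)$, which can strictly exceed $s$-$\D(\M_{\leq 1}) = \aleph_0 \cdot \G(\M)$. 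Concretely, for $\M = \ell^\infty_\C$ the $\sigma$-strong topology on $\M_{\leq 1}$ is the product topology on a product of $\C$ discs: its density is $\aleph_0$ but its weight is $\C$, and any generating seminorm family must have cardinality $\C$, since each normal functional on $\ell^\infty_\C$ has countable support and fewer than $\C$ of them would leave some coordinate invisible. Your multi-indices $(\alpha, F, n)$ then number $\chi_r(\M)$ rather than $s$-$\D(\M_{\leq 1})$, so the selection argument only yields $s$-$\D(\N_{\leq 1}) \leq \aleph_0 \cdot \chi_r(\M)$ --- i.e.\ monotonicity of $\chi_r$, which is already known and strictly weaker. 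This is exactly why the paper, in that very step, announces that the general-inclusion statement will be obtained \emph{later} by ``combining this fact with others'': since density character is not monotone under subspaces in general Hausdorff spaces, no purely topological repetition of the argument can close this gap; the von Neumann structure must enter.

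The paper proceeds in the opposite order: part (1) is proved first, algebraically, and (2) is its immediate corollary via $\aleph_0 \cdot \G(\N) \leq \aleph_0 \cdot \G(\M)$ and Theorem \ref{T:estimate}(1). The proof of (1) cuts $\M$ by $\sigma$-finite central projections $e_i$, so that each $\M_i = e_i\M$ has $\sigma$-finite center and hence satisfies $\chi_r(\M_i) \leq \aleph_0 \cdot \G(\M_i)$ by Proposition \ref{T:center}(3); it transports this decomposition to $\N$ (via $e_i\N \simeq z_i\N$, then disjointifying), applies the direct-sum formula of Theorem \ref{T:dsum} to both algebras to locate a single summand inclusion $\N_{i_0} \subseteq \M_{i_0}$ with $\G(\N_{i_0}) > \G(\M)$, and closes with the chain $\G(\M_{i_0}) \leq \G(\M) < \G(\N_{i_0}) \leq \chi_r(\N_{i_0}) \leq \chi_r(\M_{i_0}) \leq \aleph_0 \cdot \G(\M_{i_0})$, using only the trivial monotonicity of $\chi_r$. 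Your cardinal arithmetic deducing (1) from (2) is sound (though the exceptional regime also allows finite $\G(\N) > \G(\M)$, not only $\G(\N) = \aleph_0$), and your part (3) is fine: the forward direction is the paper's, and your reverse direction --- embedding $\N$ into the countably-generated properly infinite $\B(\ell^2) \bar{\otimes} \N$ and invoking the countably-generated form of Wogen's theorem --- is a legitimate but heavier alternative to the paper's one-liner $\M \subseteq \B(\ell^2) \Rightarrow \G(\M) \leq \G(\B(\ell^2)) = 1$ followed by Theorem \ref{T:main}. But since both your (1) and (3) rest on your (2), the gap propagates through the entire proposal.
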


\begin{proof}
(1): Suppose $\N \subseteq \M$ and $\G(\N) > \G(\M)$.  Find nonzero $\sigma$-finite projections $\{e_i\}_{i \in I} \subset \z(\M)$ that sum to 1.  Writing $\M_i = e_i \M$, we have $\M = \sum^\oplus \M_i$ with $\Dc(\z(\M_i)) \leq \aleph_0$.

For each $i$ the algebra $e_i \N$ is isomorphic to a direct summand $z_i \N$ of $\N$.  Since the inclusion $\N \hookrightarrow \M$ is faithful, $\vee z_i = 1_\N$.  Well-order the indices and set $y_i = z_i - \vee_{j < i} z_j$, so that $\sum y_i = 1_\N$.  Let $I' = \{i \in I \mid y_i \neq 0\}$.  Writing $\N_i = y_i \N$, we have $\N \simeq \sum^\oplus_{I'} \N_i$, and for each $i \in I'$ the embedding $z_i \N \hookrightarrow \M_i$ carries $\N_i$ isomorphically onto a subalgebra of $\M_i$.

By Theorem \ref{T:dsum} and hypothesis,
$$\max\{\log_\C(|I|), \sup\nolimits_{i \in I} \G(\M_i)\} = \G(\M) < \G(\N) = \max\{\log_\C(|I'|), \sup\nolimits_{i \in I'} \G(\N_i)\}.$$
Since $I' \subseteq I$, the right-hand side must be $\sup_{i \in I'} \G(\N_i)$.  The inequality $\G(\N_i) > \G(\M)$ must then happen for some $i = i_0$; we show that this entails countability of $\G(\N_{i_0})$ and finiteness of $\G(\M)$.  Since $\N_{i_0} \hookrightarrow \M_{i_0}$, we get
\begin{equation} \label{E:summand}
\G(\M_{i_0}) \leq \sup\nolimits_{i \in I} \G(\M_i) \leq \G(\M) < \G(\N_{i_0}) \leq \chi_r(\N_{i_0}) \leq \chi_r(\M_{i_0}) \leq \aleph_0 \cdot \G(\M_{i_0}),
\end{equation}
using Proposition \ref{T:center}(3) for the last relation.  Comparing the end terms, $\G(\M_{i_0})$ must be finite, making $\G(\N_{i_0})$ countable and $\G(\M)$ finite.  We have shown that $\G(\N_i) > \G(\M) \Rightarrow \G(\N_i) \leq \aleph_0$, so $\G(\N) = \sup \G(\N_i) \leq \aleph_0$.

(2): Part (1) guarantees that for any inclusion $\N \subseteq \M$, $\aleph_0 \cdot \G(\N) \leq \aleph_0 \cdot \G(\M)$.  Then the conclusion follows from Theorem \ref{T:estimate}(1).

(3): If $\G$ is monotone, then $\M \subseteq \B(\ell^2) \Rightarrow \G(\M) \leq \G(\B(\ell^2)) = 1$, giving a ``yes" answer to Question \ref{T:gen}.  A ``yes" answer to Question \ref{T:gen} entails a ``yes" answer to Question \ref{T:gen2} by Theorem \ref{T:main}.  Finally, a ``yes" answer to Question \ref{T:gen2} implies that $\G$ must be monotone by part (1).
\end{proof}

\begin{corollary} \label{T:mult}
Let $\M$ and $\N$ be von Neumann algebras.
\begin{enumerate}
\item $\G(\M \bar{\otimes} \N) \leq \max\{\G(\M), \G(\N)\}$.
\item If at least one of $\M$ and $\N$ is not countably-generated, then
\begin{equation} \label{E:mult}
\G(\M \bar{\otimes} \N) = \G(\M) \cdot \G(\N).
\end{equation}
\item The generator problem is equivalent to deciding whether \eqref{E:mult} is universally valid, i.e., whether $\G$ is multiplicative on tensor products.
\end{enumerate}
\end{corollary}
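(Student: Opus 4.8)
The plan is to prove the three parts in order, using as the two main tools the formula $s\text{-}\D(\M) = \aleph_0 \cdot \G(\M)$ from Theorem \ref{T:estimate}(1) and the \emph{unconditional} monotonicity of $s\text{-}\D$ from Theorem \ref{T:monotone}(2), reserving the finer range information behind Theorem \ref{T:finite} for the last part. For part (1), I would fix generating sets $\{x_\alpha\}_{\alpha < \G(\M)}$ and $\{y_\beta\}_{\beta < \G(\N)}$ and observe that $\{x_\alpha \otimes 1\} \cup \{1 \otimes y_\beta\}$ generates $\M \bar{\otimes} \N$, since the commuting subalgebras $\M \otimes 1$ and $1 \otimes \N$ together generate it. The naive count $\G(\M) + \G(\N)$ equals $\max\{\G(\M), \G(\N)\}$ only when a factor is infinite, so to get the sharp bound in all cases I would use the pairing device from the proof of Theorem \ref{T:dsum}: for each $\gamma < \min\{\G(\M), \G(\N)\}$ the commuting elements $x_\gamma \otimes 1$ and $1 \otimes y_\gamma$ generate singly-generated algebras whose join $W^*(x_\gamma \otimes 1,\, 1 \otimes y_\gamma)$ is again singly generated by Lemma \ref{T:SS}; replacing each pair by one generator collapses the count to exactly $\max\{\G(\M), \G(\N)\}$.

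For part (2), the upper bound is just part (1), and the content is the matching lower bound. Here the crucial point is to avoid invoking monotonicity of $\G$, which by Theorem \ref{T:monotone}(3) is itself equivalent to the generator problem; deducing $\G(\M) \leq \G(\M \bar{\otimes} \N)$ directly from the inclusion $\M \cong \M \otimes 1 \subseteq \M \bar{\otimes} \N$ would be circular. Instead I would apply monotonicity of $s\text{-}\D$ to the unital inclusions $\M \otimes 1 \subseteq \M \bar{\otimes} \N$ and $1 \otimes \N \subseteq \M \bar{\otimes} \N$, obtaining $\aleph_0 \cdot \max\{\G(\M), \G(\N)\} \leq \aleph_0 \cdot \G(\M \bar{\otimes} \N)$. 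The hypothesis that some factor is not countably generated makes $\max\{\G(\M), \G(\N)\}$ uncountable, so the spurious $\aleph_0$ factors drop out and yield $\max\{\G(\M), \G(\N)\} \leq \G(\M \bar{\otimes} \N)$; combined with part (1) this gives $\G(\M \bar{\otimes} \N) = \max\{\G(\M), \G(\N)\} = \G(\M) \cdot \G(\N)$, the last equality holding because at least one factor is infinite.

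For part (3) I would argue both implications. If the generator problem has a positive answer, then by Theorem \ref{T:main} every countably-generated algebra is singly generated: when both $\M$ and $\N$ are countably generated this forces $\G(\M) = \G(\N) = 1$, and part (1) gives $\G(\M \bar{\otimes} \N) = 1 = \G(\M) \cdot \G(\N)$, while if some factor is not countably generated \eqref{E:mult} is exactly part (2); so \eqref{E:mult} holds universally. Conversely, suppose \eqref{E:mult} always holds but the generator problem fails. By the analysis in the proof of Theorem \ref{T:finite}, failure forces the range of $\G$ on countably-generated $\text{II}_1$ factors to be $\{1, 2, \dots, \aleph_0\}$, so there is a factor $\M_0$ with $\G(\M_0) = 2$. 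Then \eqref{E:mult} would demand $\G(\M_0 \bar{\otimes} \M_0) = \G(\M_0) \cdot \G(\M_0) = 4$, contradicting the bound $\G(\M_0 \bar{\otimes} \M_0) \leq \max\{2, 2\} = 2$ from part (1); this contradiction establishes the equivalence.

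I expect the main obstacle to be the lower bound in part (2), precisely because the natural monotonicity argument is unavailable: the remedy is to route the comparison through $s\text{-}\D$, which is monotone, at the cost of only a factor of $\aleph_0$ that the uncountability hypothesis renders harmless. A secondary subtlety lurks in the converse of part (3): one genuinely needs a \emph{finite} value $\G(\M_0) \geq 2$ rather than merely $\G(\M_0) \in [2, \aleph_0]$, since $\aleph_0 \cdot \aleph_0 = \aleph_0$ would produce no contradiction with part (1). This is exactly why the finer range computation of Theorem \ref{T:finite}, built on Shen's invariant $\GG(\cdot)$, must be brought in at that step rather than the cruder observation that a non-singly-generated separably-acting algebra has $\G \in [2, \aleph_0]$.
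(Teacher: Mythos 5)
Your proposal is correct, and on parts (1) and (2) it tracks the paper closely: part (1) is exactly the paper's argument (the pairing device via Lemma \ref{T:SS}, which the paper cites as ``the same argument as the second-to-last paragraph of Example \ref{T:LG}''), and in part (2) your detour through monotonicity of $s$-$\D$ is a mild repackaging of what the paper does. The paper invokes Theorem \ref{T:monotone}(1) directly: that statement is an \emph{unconditional} restriction on failures of monotonicity (an inclusion with $\G(\N) > \G(\M)$ forces $\N$ countably-generated), so applying it to $\M, \N \subseteq \M \bar{\otimes} \N$ when one factor is not countably-generated is not circular — your worry on this point is apt for full monotonicity (Theorem \ref{T:monotone}(3)) but not for part (1); since Theorem \ref{T:monotone}(2) is itself deduced from part (1), your $s$-$\D$ route carries the same content at the cost of one extra translation through $s\text{-}\D(\cdot) = \aleph_0 \cdot \G(\cdot)$. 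The genuine divergence is the converse half of part (3). The paper is lighter: a negative answer to Question \ref{T:gen2} yields $\M$ with $\G(\M) \in (1, \aleph_0]$, and then $\M \bar{\otimes} \B(\ell^2)$ is countably-generated and properly infinite, hence singly-generated by Wogen's theorem, giving $\G(\M \bar{\otimes} \B(\ell^2)) = 1 < \G(\M) = \G(\M) \cdot \G(\B(\ell^2))$ as in \eqref{E:ctrex}; no finite value of $\G$ is needed because the second factor contributes $\G = 1$ while collapsing the product. You instead extract a $\text{II}_1$ factor with $\G(\M_0) = 2$ from the range dichotomy in the proof of Theorem \ref{T:finite} and contradict part (1) via $\G(\M_0 \bar{\otimes} \M_0) \leq 2 < 4$. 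This is valid — the dichotomy together with Willig's reduction does force the range $\{1, 2, \dots, \aleph_0\}$ when the generator problem fails — and your observation that $\aleph_0 \cdot \aleph_0 = \aleph_0$ blocks the self-tensor trick at infinite values is a real subtlety of your approach. But your closing claim that Theorem \ref{T:finite} \emph{must} be brought in at that step is mistaken: it is necessitated only by your choice of test algebras, and the paper's choice $\N = \B(\ell^2)$ sidesteps Shen's invariant entirely, needing only Wogen's classical theorem (extended, as the paper notes, to countably-generated algebras). In exchange, your version has the small aesthetic merit that the counterexample lives among tensor squares of $\text{II}_1$ factors and contradicts part (1) internally, rather than relying on proper infiniteness.
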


\begin{proof}
(1): Same argument as the second-to-last paragraph of Example \ref{T:LG}.  

(2): Use part (1) and Theorem \ref{T:monotone}(1), noting $\M$ and $\N$ are subalgebras of $\M \bar{\otimes} \N$.  

(3): A ``yes" answer to Question \ref{T:gen2} makes both sides of \eqref{E:mult} equal 1 whenever $\M$ and $\N$ (so also $\M \bar{\otimes} \N$) are countably-generated.  A ``no" answer to Question \ref{T:gen2} implies the existence of $\M$ with $\G(\M) \in (1, \aleph_0]$.  Since $\M \otimes \B(\ell^2)$ is countably-generated and properly infinite,
\begin{equation} \label{E:ctrex}
\G(\M \bar{\otimes} \B(\ell^2)) = 1 < \G(\M) = \G(\M) \cdot \G(\B(\ell^2)). \qedhere
\end{equation}
\end{proof}

Tensoring with $\B(\ell^2_\kappa)$ can either increase or decrease $\G(\M)$, depending on $\kappa$ and the answer to the generator problem.  For uncountable $\kappa$, by Example \ref{T:typeI} and Corollary \ref{T:mult}(2) the action is nondecreasing, and even strictly increasing when $\kappa > \G(\M)$.  But for countable $\kappa$, the action is nonincreasing by Corollary \ref{T:mult}(1).  In fact, if the generator problem has a negative answer, the decreasing effect gets stronger as $\kappa$ increases, until at $\kappa = \aleph_0$ all countably-generated tensor products are singly-generated.  This is illustrated by two very nice results from a neglected 1972 paper of Behncke.

\begin{theorem} \label{T:behncke} $($\cite[Lemma 2, Theorem 1 and following remark]{Be}$)$
Let $\M$ and $\N$ be separably-acting von Neumann algebras.
\begin{enumerate}
\item If $\M$ is generated by $n$ self-adjoint operators, then $\mathbb{M}_k \otimes \M$ can be generated by $m \geq 2$ self-adjoint operators as long as $m-1 \geq \frac{n-1}{k^2}$.
\item If $\M$ and $\N$ lack finite type I summands, then $\M \bar{\otimes} \N$ is singly-generated.
\end{enumerate}
\end{theorem}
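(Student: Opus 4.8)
The plan is to establish the concrete packing estimate (1) first and then bootstrap it to (2).

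\emph{Part (1).} I would argue by an explicit construction inside $\mathbb{M}_k \otimes \M$, writing $\{e_{ij}\}$ for the matrix units of $\mathbb{M}_k$ and identifying $e_{ij}$ with $e_{ij}\otimes 1$. The governing mechanism is that once the matrix units lie in the algebra we generate, the $(i,j)$-entry of \emph{any} element $x$ is extracted as $e_{ii}\,x\,e_{jj}=e_{ij}\otimes x_{ij}$, whence $1\otimes x_{ij}$ is recovered by conjugating with matrix units. Thus a single self-adjoint $T\in\mathbb{M}_k\otimes\M$ secretly stores all of its entries $x_{ij}\in\M$, subject only to $x_{ji}=x_{ij}^*$: its $k$ diagonal entries are self-adjoint and its $\binom{k}{2}$ strictly upper entries are arbitrary (each worth two self-adjoint pieces as real and imaginary parts), for a total of $k+2\binom{k}{2}=k^2$ self-adjoint ``slots.'' I would spend one generator on a scalar diagonal $\mathrm{diag}(\lambda_1,\dots,\lambda_k)\otimes 1$ with distinct $\lambda_i$ to produce the $e_{ii}$, place a minimal pattern of scalar $1$'s off the diagonal (inside one further generator) to upgrade these to all matrix units, and then fill every remaining slot with one of the self-adjoint generators $a_1,\dots,a_n$ of $\M$. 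Counting the slots that survive after the structural entries are spent, and optimizing the placement, is what produces the precise threshold $m-1\ge\frac{n-1}{k^2}$.

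\emph{Part (2).} Here part (1) is the finite-stage engine and the new content is passing to countably many generators. I would use that any separably-acting algebra with no finite type I summand contains an increasing tower of matrix subalgebras $\mathbb{M}_{k_1}\subset\mathbb{M}_{k_2}\subset\cdots$ whose matrix units sum to $1$ — for a finite, hence type $\mathrm{II}_1$, algebra by repeatedly halving $1$ into equivalent orthogonal projections, and trivially for a properly infinite one (whose tensor products one may in any case dispatch directly through Wogen's theorem \cite{Wo}). Applying this to $\N$ realizes $\M\bar{\otimes}\N\cong\mathbb{M}_{k_j}\otimes(\M\bar{\otimes}\N_j)$, with $\N_j$ the corner cut by a rank-one matrix unit. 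The strategy is then to encode a fixed countable self-adjoint generating set of $\M\bar{\otimes}\N$ into a single operator $T$ by distributing the generators across the nested blocks: the $j$-th block of size $k_j$ absorbs a finite batch of about $k_j^2$ generators exactly as in part (1), the batches being placed at widely separated spectral scales and with coefficients decaying fast enough that $T$ is bounded and each batch is recoverable from $T$ by multiplication by spectral projections. Since $k_j\to\infty$ every generator is eventually absorbed, so $W^*(T)=\M\bar{\otimes}\N$; Lemma \ref{T:SS} merges any residual commuting structural generator into $T$, giving single generation.

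The main obstacle is the finite case, $\mathrm{II}_1\bar{\otimes}\mathrm{II}_1$: with no proper infiniteness to exploit, the single generator must be built by hand, and the delicate point is the simultaneous bookkeeping of the convergence of the series defining $T$, the clean spectral separation that lets each finite batch be recovered exactly, and the control of the ever-shrinking corners $\N_j$, whose own generators must also be swept up as $j$ grows. Pinning down the exact constant in part (1) is a secondary, purely combinatorial, nuisance.
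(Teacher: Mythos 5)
First, a point of orientation: the paper does not prove Theorem \ref{T:behncke} at all --- it is quoted from Behncke \cite{Be} (Lemma 2, Theorem 1, and the following remark), with only the observation that Behncke's proof tolerates replacing ``separably-acting'' by ``countably-generated.'' So your proposal can only be measured against Behncke's own argument, and in architecture it does match it: part (1) is a matrix-unit packing lemma, and part (2) rests on an increasing unital tower $\mathbb{M}_{k_1} \subset \mathbb{M}_{k_2} \subset \cdots$ obtained by halving projections in the type $\text{II}_1$ part, with the properly infinite summands dispatched by Wogen \cite{Wo} and the finitely many direct summands of $\M \bar{\otimes} \N$ recombined via Lemma \ref{T:SS}. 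That reduction is correct.

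Two genuine gaps remain, however. In part (1), your slot count does not reach the stated threshold: spending one full generator on the distinct-scalar diagonal and $2(k-1)$ further self-adjoint slots on scalar off-diagonal $1$'s leaves a payload of $(m-1)k^2 - 2(k-1)$, which falls short of the required $(m-1)k^2 + 1$ by $2k-1$, and no ``optimization of placement'' of inert scalar entries closes this --- the constant is not a purely combinatorial nuisance. One must make the structural entries themselves carry generators, e.g.\ encode a generator $a$ as an invertible positive entry $1 + \e a$ in slot $(i,i+1)$, recovering $e_{i,i+1} \otimes 1$ as the partial isometry in its polar decomposition (which lies in the generated von Neumann algebra) and $a$ from the modulus, or place scaled generators in disjoint spectral windows along the diagonal so that spectral projections of that single generator still yield the $e_{ii}$. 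In part (2), the actual crux --- the $\text{II}_1 \bar{\otimes}\, \text{II}_1$ case --- is exactly what your sketch defers: you must show that the spectral projections of your operator $T$ (which do lie in $W^*(T)$) coincide with the intended structural projections even though the nested blocks fail to commute, which forces the batches to be positive, supported in disjoint spectral intervals, and diagonal with respect to projections themselves recoverable from $T$; and you must carry out the induction in which a fixed countable generating set is re-coordinatized at each stage over the shrinking corners $\N_j$ and shown to be exhausted in the limit (this bookkeeping is the content of Behncke's Theorem 1, which establishes single generation for any separably-acting algebra equipped with such a tower generating the algebra together with its relative commutants). You candidly name both difficulties but resolve neither, so what you have is a correct, well-oriented roadmap --- essentially the skeleton of Behncke's proof --- rather than a proof.
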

From inspection of its proof, this theorem remains valid if ``separably-acting" is replaced by ``countably-generated."

The fact that $\G$ is nonincreasing under tensoring with a matrix algebra is a hallowed trick in the history of the generator problem, evolving quickly from its inception in Pearcy's 1963 paper \cite{Pe2}.  Based on a fairly thorough survey of the literature, the author concluded that Theorem \ref{T:behncke}(1) is the sharpest result of this type.  It is essentially the strongest possible implication that is compatible with the contingency that $L(\F_m)$ is not generated by fewer than $m$ self-adjoint operators, because of (and exactly matching) Voiculescu's isomorphism $L(\F_m) \simeq \mathbb{M}_k \otimes L(\F_{1 + k^2(m-1)})$ for $1 \leq k < \aleph_0$ and $1 < m \leq \aleph_0$ (\cite[Theorem 3.3(b)]{V}).  Notice that Shen's invariant $\GG(\cdot)$ scales similarly under tensoring with matrix algebras (Theorem \ref{T:shen}(2)).

Special cases of Theorem \ref{T:behncke}(2) include properly infinite $\M$ (since then $\M \simeq \M \otimes \B(\ell^2)$) and tensor products of $\text{II}_1$ factors (later reobtained as \cite[Theorem 6.2(c)]{GP}).

\smallskip

For completeness we observe that $\chi_r$ and $\Dc$ are multiplicative on tensor products.

\begin{proposition} \label{T:mult2}
If $\M$ and $\N$ are von Neumann algebras, then
$$\chi_r(\M \bar{\otimes} \N) = \chi_r(\M) \cdot \chi_r(\N) \quad \text{ and } \quad \Dc(\M \bar{\otimes} \N) = \Dc(\M) \cdot \Dc(\N).$$
\end{proposition}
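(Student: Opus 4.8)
The plan is to prove each of the two equalities by establishing matching upper and lower bounds, handling $\chi_r$ and $\Dc$ in turn. For $\chi_r$, the lower bound $\chi_r(\M \bar\otimes \N) \geq \chi_r(\M) \cdot \chi_r(\N)$ should follow by a slicing argument in the spirit of Example \ref{T:LG}(2): if $\M \bar\otimes \N$ acts faithfully on a Hilbert space $\h$ of dimension $\mu$, I would fix faithful representations and use the conditional expectations (slice maps) onto $\M$ and $\N$ to force enough independent directions, so that $\mu \geq \chi_r(\M) \cdot \chi_r(\N)$. More cleanly, I expect the upper bound to be the easy half: if $\M$ acts faithfully on $\ell^2_{\chi_r(\M)}$ and $\N$ on $\ell^2_{\chi_r(\N)}$, then $\M \bar\otimes \N$ acts faithfully on the tensor product of these Hilbert spaces, which has dimension $\chi_r(\M) \cdot \chi_r(\N)$. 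For the reverse, rather than slicing it is likely cleaner to invoke the already-proved machinery: by \eqref{E:estimate} we have $\chi_r = \G \cdot \Dc$, and once the $\G$ and $\Dc$ multiplicativity statements are in hand (see below), $\chi_r$ multiplicativity would follow formally — but since Corollary \ref{T:mult} only gives multiplicativity of $\G$ under the extra hypothesis that one factor is not countably-generated, I cannot route the whole proof through \eqref{E:estimate}, and the direct Hilbert-space argument is the safe path.

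For $\Dc$, the upper bound $\Dc(\M \bar\otimes \N) \leq \Dc(\M) \cdot \Dc(\N)$ is the more delicate direction and I expect it to be the main obstacle. The lower bound is straightforward: given maximal families of pairwise orthogonal nonzero projections $\{p_\alpha\}$ in $\M$ and $\{q_\beta\}$ in $\N$, the family $\{p_\alpha \otimes q_\beta\}$ consists of pairwise orthogonal nonzero projections in $\M \bar\otimes \N$, giving $\Dc(\M \bar\otimes \N) \geq \Dc(\M) \cdot \Dc(\N)$. For the upper bound, the obstruction is that a general family of orthogonal projections in the tensor product need not split as a product of projections from the two factors, so I cannot simply count coordinates. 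The approach I would take is to reduce to the $\sigma$-finite situation using the localization philosophy of Section \ref{S:center}: write both $\M$ and $\N$ as direct sums of algebras with $\sigma$-finite center via Theorem \ref{T:estimate}(1), observe that the tensor product distributes over these direct sums, and use additivity of $\Dc$ from Theorem \ref{T:dsum} to bound $\Dc(\M \bar\otimes \N)$ by $\sum_{i,j} \Dc(\M_i \bar\otimes \N_j)$. Within each summand one can work with faithful normal semifinite weights (or states in the $\sigma$-finite case), where the trace/weight structure controls the orthogonal projections; the cardinality of a maximal orthogonal family in a tensor product of $\sigma$-finite pieces is then governed by the product of the two decomposability numbers.

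The hard part will be the upper bound for $\Dc$ in the non-$\sigma$-finite regime, precisely because the supremum in the definition of $\Dc$ is attained (by \cite[Theorem 2.6(i)]{HN}) but the extremal family need not respect the tensor decomposition. I would expect the cleanest route to be: reduce to each summand having $\sigma$-finite center via Proposition \ref{T:center}(4), and then in each such summand use the representation-density bound $\chi_r(\M_i) = \aleph_0 \cdot \G(\M_i)$ together with the general inequality $\Dc(\M_i \bar\otimes \N_j) \leq \chi_r(\M_i \bar\otimes \N_j)$ from \eqref{E:estimate} and the $\chi_r$-multiplicativity already proved in the first half of this very proposition. This last observation is the key leverage point: since the $\chi_r$ equality is available independently, one gets $\Dc(\M_i \bar\otimes \N_j) \leq \chi_r(\M_i) \cdot \chi_r(\N_j)$, and a careful bookkeeping of the $\aleph_0$ factors across all summands — using that $\Dc$ is infinite on any infinite-dimensional algebra — collapses to the desired $\Dc(\M) \cdot \Dc(\N)$. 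I would finish by separately checking the finite-dimensional cases, where $\M \bar\otimes \N = \sum^\oplus \mathbb{M}_{n_k m_\ell}$ and both equalities reduce to elementary arithmetic.
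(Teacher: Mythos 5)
Your easy halves agree with the paper: the upper bound for $\chi_r$ via the faithful action of $\M \bar{\otimes} \N$ on $\h \otimes \kH$, and the product family $\{p_\alpha \otimes q_\beta\}$ witnessing $\Dc(\M \bar{\otimes} \N) \geq \Dc(\M) \cdot \Dc(\N)$. But there is a genuine gap at exactly the point you flag as the main obstacle, the upper bound for $\Dc$, and your proposed repair fails. The inequality $\Dc(\M_i \bar{\otimes} \N_j) \leq \chi_r(\M_i) \cdot \chi_r(\N_j)$ is hopelessly lossy, because $\chi_r = \G \cdot \Dc$ by \eqref{E:estimate} and the $\G$ factor can be arbitrarily larger than $\Dc$. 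Take $\M = \N = L(\F_\kappa)$ with $\kappa$ uncountable: these are $\sigma$-finite $\text{II}_1$ factors, so your direct-sum reduction does nothing (a factor has no nontrivial central projections; note also that one cannot in general decompose a von Neumann algebra into $\sigma$-finite \emph{summands}, as $\B(\ell^2_\kappa)$ shows, since direct sums require central projections). The tensor product carries the faithful normal state $\tau \otimes \tau$, so the desired conclusion is $\Dc(\M \bar{\otimes} \N) = \aleph_0$; but your bound gives only $\chi_r(\M) \cdot \chi_r(\N) = \kappa$ by Example \ref{T:LG}(1), and no bookkeeping of $\aleph_0$ factors recovers $\aleph_0$ from $\kappa$.

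The missing idea --- the paper's actual mechanism --- is that for $\Dc$ no upper/lower split is needed at all. By \cite[Theorem 2.6(i)]{HN}, the supremum defining $\Dc$ is attained by a partition of unity into $\sigma$-finite (cyclic, in a suitable representation) projections, and any infinite family of pairwise orthogonal nonzero $\sigma$-finite projections summing to $1$ has cardinality exactly $\Dc$. So the paper chooses the partitions $\{p_\alpha\}$ and $\{q_\beta\}$ to consist of $\sigma$-finite projections, observes that $p_\alpha \otimes q_\beta$ is again $\sigma$-finite (if $\varphi$ and $\psi$ are normal states with supports $p_\alpha$ and $q_\beta$, then $\varphi \otimes \psi$ has support $p_\alpha \otimes q_\beta$), and concludes that the product family is an infinite $\sigma$-finite partition of unity in $\M \bar{\otimes} \N$, forcing equality in one stroke. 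A smaller remark on $\chi_r$: your slice-map sketch for the lower bound is both undeveloped and unnecessary --- once the finite-dimensional case is set aside, $\chi_r(\M) \cdot \chi_r(\N) = \max\{\chi_r(\M), \chi_r(\N)\}$, so plain monotonicity of $\chi_r$ applied to the inclusions $\M, \N \subseteq \M \bar{\otimes} \N$ gives the lower bound immediately, which is what the paper does.
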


\begin{proof}
This is straightforward when both algebras are finite-dimensional, so assume that at least one is infinite-dimensional.

The inclusions $\M, \N \subseteq \M \bar{\otimes} \N$ give the relation $\chi_r(\M) \cdot \chi_r(\N) = \max\{ \chi_r(\M), \chi_r(\N) \} \leq \chi_r(\M \bar{\otimes} \N)$ by monotonicity. For the opposite inequality just note that if $\M$ acts faithfully on $\h$ and $\N$ acts faithfully on $\kH$, then $\M \bar{\otimes} \N$ acts faithfully on $\h \otimes \kH$ by construction (\cite[III.1.5.4]{B}).

For $\Dc$, let $\{p_\alpha\}_{\alpha < \text{dec}(\M)} \subset \M$ and $\{q_\beta\}_{\beta < \text{dec}(\N)} \subset \N$ be families of nonzero projections adding to 1.  By \cite[Theorem 2.6(i)]{HN} we may assume that all these projections are $\sigma$-finite.  Then the $\Dc(\M) \cdot \Dc(\N)$ projections $\{p_\alpha \otimes q_\beta\}_{\alpha < \text{dec}(\M), \beta < \text{dec}(\N)}$ are an infinite family of $\sigma$-finite nonzero projections adding to 1 as well, so again by \cite[Theorem 2.6(i)]{HN}, $\Dc(\M \bar{\otimes} \N) = \Dc(\M) \cdot \Dc(\N)$.  (Comments: (1) The result in \cite[Theorem 2.6(i)]{HN} refers to cyclic projections instead of $\sigma$-finite projections.  The former concept depends on a choice of representation and the latter does not, but in a suitable (say, standard) representation they agree.  (2) To see that the tensor product of $\sigma$-finite projections is $\sigma$-finite, note that $\sigma$-finiteness is equivalent to being the support of a normal state.  If $\varphi$ is supported on $p_\alpha$ and $\psi$ is supported on $q_\beta$, then $\varphi \otimes \psi$ is supported on $p_\alpha \otimes q_\beta$ (\cite[Proposition III.2.2.29]{B}).)
\end{proof}

\section{Some remarks on cardinal invariants for double duals of $C^*$-algebras} \label{S:dd}

\subsection{Double duals of full $C^*$-algebras of free groups} \label{S:free}
If $\G(\M) = \kappa$, then $\M$ is generated by $\leq 2\kappa$ unitaries and is therefore a summand of $C^*(\mathbb{F}_{2\kappa})^{**}$.  Thus $C^*(\mathbb{F}_{2\kappa})^{**}$ is the ``largest" von Neumann algebra generated by $\kappa$ elements.  Note that one can construct a one-dimensional representation of $\mathbb{F}_{2\kappa}$ by sending the $2\kappa$ generators to arbitrary unit scalars.  This produces $\C^{2\kappa} = \C^\kappa$ distinct 1-dimensional summands in $C^*(\mathbb{F}_{2\kappa})^{**}$, which therefore has decomposability number $\geq \C^\kappa$.  On the other hand $C^*(\mathbb{F}_{2\kappa})^{**}$ is visibly generated by $\kappa$ elements.  The general relation $\Dc(\M) \leq \C^{\text{gen}(\M)}$ from \eqref{E:estimate} then forces $\Dc(C^*(\mathbb{F}_{2\kappa})^{**}) = \C^\kappa$.  While this argument does not show that $\G(C^*(\mathbb{F}_{2\kappa})^{**})$ equals $\kappa$, it is not larger, and $\C^{\text{gen}(C^*(\mathbb{F}_{2\kappa})^{**})} = \C^\kappa$.  In other words
\begin{equation} \label{E:free}
\log_\C(\C^\kappa) \leq \G(C^*(\F_{2\kappa})^{**}) \leq \kappa.
\end{equation}
By the same analysis, \eqref{E:free} also applies if $\F_{2\kappa}$ is replaced with $\F^\text{ab}_{2\kappa}$, the free abelian group on $2\kappa$ generators.  In particular $1 < \log_\C(\C^\C) \leq \G(C^*(\F^\text{ab}_\C)^{**}) \leq \C$; this phenomenon was mentioned in Remark \ref{T:c}(1).  (Incidentally, each of the relations $\log_\C(\C^\C) = \C$ and $\log_\C(\C^\C) < \C$ is consistent with ZFC.  The author thanks Ilijas Farah for explaining this to him.)

\subsection{Relations to the work of Hu and Neufang} \label{S:HN}
Hu and Neufang proved many results about $\Dc(\M)$ in \cite{HN, Hu, N}, especially for von Neumann algebras that are second duals and/or associated to locally compact groups.  As remarked earlier, the intersection between these papers and the present one mostly concerns \eqref{E:estimate}.  Here is something interesting that follows from the union: for $G$ any infinite locally compact group, $\max\{\Dc(L(G)), \Dc(L^\infty(G))\} = \chi_r(L(G))$.  (Both quantities equal $\dim L^2(G)$, by \cite[Proof of Lemma 7.6]{HN} and the proof of ``$\aleph_0 \cdot \chi_r(\M) = \D(\M_*)$" in Theorem \ref{T:estimate}(2).)

In the rest of this section we apply our results to two questions raised by Hu and Neufang.

In \cite[Remark 6.7(ii)]{HN} they ask whether $\Dc(\A^{**}) = |\A^*|$ for every infinite-dimensional unital commutative $C^*$-algebra $\A$.  The answer to this question is no.  Let $I$ be an infinite set whose cardinality satisfies $|I| < |I|^{\aleph_0}$; for example $|I|$ could be $\aleph_0$ or $\aleph_\omega$ (the latter by K\"{o}nig's theorem, see \cite[Corollary 5.14]{J}.)  Let $\A$ be the unitization of $c_0(I)$, so that $\A^{**} \simeq \ell^\infty_I$.  Then from \eqref{E:estimate},
$$|\A^*| = |(\A^{**})_*| = \C \cdot  \chi_r (\ell^\infty_I )^{\aleph_0} = |I|^{\aleph_0} > |I| = \Dc(\ell^\infty_I) = \Dc(\A^{**}).$$

In \cite[Remark 6.7(iii)]{HN} they ask for which infinite-dimensional $C^*$-algebras $\A$ one has
\begin{equation} \label{E:HN}
\Dc(\A^{**}) = \D(\A^*).
\end{equation}
(One always has the relation ``$\leq$," by \cite[Corollary 2.7]{HN} or \eqref{E:estimate}.)  Since $\D(\A^*) = \D((\A^{**})_*) = \chi_r(\A^{**})$ also by \eqref{E:estimate}, condition \eqref{E:HN} says that $\A^{**}$ is \textit{maximally decomposable} (\cite[Definition 3.1]{Hu}), the main concept of Hu's paper \cite{Hu}.  Hu and Neufang show that \eqref{E:HN} holds for many classes of $C^*$-algebras associated to infinite locally compact groups.  
\textit{Does \eqref{E:HN} hold for all infinite-dimensional $C^*$-algebras?}  We do not know, but at least it is widely enjoyed.

\begin{proposition} \label{T:last}
If an infinite-dimensional $C^*$-algebra $\A$ is either type I, or generated by $\leq \C$ elements, then \eqref{E:HN} holds.
\end{proposition}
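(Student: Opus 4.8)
The plan is to collapse \eqref{E:HN} to a single inequality and then supply orthogonal projections. By Theorem~\ref{T:estimate}(2) we have $\D(\A^*)=\chi_r(\A^{**})$ together with the general bound $\Dc(\A^{**})\le\chi_r(\A^{**})$, so \eqref{E:HN} is equivalent to the reverse inequality $\Dc(\A^{**})\ge\chi_r(\A^{**})$; writing $\chi_r=\G\cdot\Dc$, this is the same as $\Dc(\A^{**})\ge\aleph_0\cdot\G(\A^{**})$. The universal source of orthogonal projections is the reduced atomic representation: for any $\A$ the central summand $z_a\A^{**}$ carried by the direct sum of all equivalence classes of irreducible representations is $\prod_{[\pi]\in\hat{\A}}\B(\h_\pi)$, and choosing an orthonormal basis in each $\h_\pi$ yields a pairwise orthogonal family of rank-one projections of cardinality $\sum_{[\pi]}\dim\h_\pi=\chi_r(z_a\A^{**})$ (additivity of $\chi_r$, Theorem~\ref{T:dsum}, and Example~\ref{T:typeI}). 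Hence $\Dc(\A^{**})\ge\chi_r(z_a\A^{**})$ for \emph{every} $\A$, and since $\chi_r(\A^{**})=\chi_r(z_a\A^{**})+\chi_r((1-z_a)\A^{**})$, it remains only to dominate the non-atomic part: it suffices to prove $\chi_r((1-z_a)\A^{**})\le\Dc(\A^{**})$. This is the step that must consume the hypotheses, since for general $\A$ it is exactly the open question.

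For the type I case I would invoke the structural decomposition $\A^{**}\simeq\bigoplus_n\B(\h_n)\,\bar{\otimes}\,\z_n$ with each $\z_n$ abelian, so that by Proposition~\ref{T:mult2} the non-atomic estimate localizes onto the abelian centers and reduces to the commutative case $\A=C_0(X)$. There the assertion is $\Dc(C_0(X)^{**})=\D(M(X))=|X|$: the point masses $\{\delta_x\}_{x\in X}$ give $|X|$ pairwise orthogonal minimal projections, so $\Dc\ge|X|$, while $\D(M(X))\le|X|$ should follow from regularity of Radon measures, namely that in the universal measure algebra of $X$ every diffuse band is accompanied by a mutually singular family as large as its Maharam type. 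This equates the cellularity and the density character of the $L$-space $M(X)$, which is the core of the commutative, and hence of the type I, case.

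For the $\le\C$-generated case, the generators of $\A$ also generate $\A^{**}$ as a von Neumann algebra, so $\G(\A^{**})\le\C$ and $\chi_r(\A^{**})=\G(\A^{**})\cdot\Dc(\A^{**})\le\C\cdot\Dc(\A^{**})$; if $\Dc(\A^{**})\ge\C$ this already forces $\chi_r=\Dc$ and we are done, leaving only the range $\aleph_0\le\Dc(\A^{**})<\C$, where $\chi_r(\A^{**})=\G(\A^{**})\le\C$ and one must again produce a mutually singular family of states of the diffuse part of size $\chi_r\le\C$. The main obstacle, common to both cases, is precisely this maximal decomposability of $(1-z_a)\A^{**}$: upgrading the abundant \emph{separated} families that witness the density character $\D(\A^*)$ to genuinely \emph{orthogonal} (mutually singular) families of states. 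For the atomic part this is automatic; for the diffuse part it is false in general and must be extracted either from the homogeneity forced by the type I structure or from the ceiling $\chi_r\le\C$ available in the $\le\C$-generated case.
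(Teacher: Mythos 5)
Your reduction of \eqref{E:HN} to the single inequality $\Dc(\A^{**}) \geq \aleph_0 \cdot \G(\A^{**})$ is correct and matches the paper, and your observation that the atomic part contributes $\chi_r(z_a\A^{**})$ orthogonal projections is fine. But both of your case analyses stop short of the decisive arguments. In the type I case, ``localizes onto the abelian centers and reduces to the commutative case $\A = C_0(X)$'' hides a genuine gap: the abelian von Neumann algebras $\z_n$ in the decomposition $\A^{**} \simeq \sum^\oplus \B(\h_n)\,\bar{\otimes}\,\z_n$ are \emph{not} known to be double duals of commutative $C^*$-algebras, and the inequality $\G \leq \Dc$ fails for general abelian von Neumann algebras --- a tracial ultrapower of $L^\infty[0,1]$ (Example \ref{T:LG}(3)) is abelian and $\sigma$-finite, so $\Dc = \aleph_0$, yet not countably-generated. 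To legitimize the reduction the paper needs two ingredients you omit: a composition series reducing to continuous trace algebras, and Pedersen's theorem that $\z(\A^{**}) \simeq C_0(\hat{\A})^{**}$ for continuous trace $\A$. Moreover, your proposed treatment of the commutative case via Maharam types is wrong in detail: measure algebras of finite measures are ccc, so a single diffuse band of Maharam type $\lambda$ contains only countably many mutually singular nonzero measures; a family of size $\lambda$ would have to be spread across $\lambda$ distinct bands, and producing those bands is essentially the theorem itself. The paper sidesteps the diffuse part entirely: Stone--Weierstrass gives $\G(C_0(X)^{**}) \leq |X|$ while point masses give $\Dc(C_0(X)^{**}) \geq |X|$, so $\chi_r = \G \cdot \Dc = \Dc$. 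Your framing ``the atomic part is automatic; the diffuse part is the hard part'' inverts the actual mechanism --- the atomic projections alone suffice once generation is bounded by $|X|$.

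In the $\leq \C$-generated case you correctly note that $\G(\A^{**}) \leq \C$ and that one is done if $\Dc(\A^{**}) \geq \C$, but you then leave the remaining range $\aleph_0 \leq \Dc(\A^{**}) < \C$ as an acknowledged ``main obstacle'' rather than closing it. The paper rules out that range altogether: having disposed of the type I case, it may assume $\A$ is not type I, whence Sakai's nonseparable version of Glimm's theorem produces a subquotient of $\A$ isomorphic to the CAR algebra $\mathbb{M}_{2^\infty}$, and Powers's continuum of mutually nonisomorphic (hence disjoint) factor representations yields $\Dc(\A^{**}) \geq \Dc(\mathbb{M}_{2^\infty}^{**}) \geq \C$. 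So your proposal is a correct framing plus an honest admission of exactly the two steps --- the continuous-trace/Pedersen reduction with the Stone--Weierstrass/point-mass endgame, and the Glimm--Sakai/Powers input --- that constitute the substance of the proof; as it stands, neither case is established.
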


\begin{proof}
We will work with a reformulation of \eqref{E:HN}.  Since $\Dc(\A^{**}) \cdot \G(\A^{**}) = \D(\A^*)$ by \eqref{E:estimate}, \eqref{E:HN} is equivalent to $\Dc(\A^{**}) = \Dc(\A^{**}) \cdot \G(\A^{**})$, which is in turn equivalent to
\begin{equation} \label{E:HN'}
\G(\A^{**}) \leq \Dc(\A^{**}).
\end{equation}

\smallskip

Suppose that $\A$ is type I.  Let $\{\J_\alpha\}_{\alpha \in I}$ be a composition series for $\A$ in which $\J_{\alpha + 1}/\J_\alpha$ is a continuous trace algebra for every $\alpha \in I$ (\cite[Corollary IV.1.4.28]{B}).  Then
\begin{equation} \label{E:series}
\A^{**} \simeq \sum\nolimits^\oplus (\J_{\alpha+1}/\J_\alpha)^{**}.
\end{equation}
Because $\Dc$ is additive on direct sums and $\G$ is only subadditive (Theorem \ref{T:dsum}), to prove \eqref{E:HN'} it suffices to prove $\G \leq \Dc$ for all summands in \eqref{E:series}.  Thus we only need to show \eqref{E:HN'} for continuous trace algebras.  Since \eqref{E:HN'} is automatic for $\A^{**}$ countably-generated ($2 \leq \G(\A^{**}) \leq \aleph_0$ implies $\A^{**}$ infinite-dimensional and $\Dc(\A^{**}) \geq \aleph_0$), we may and do assume that $\G(\A^{**}) > \aleph_0$.

Continuous trace algebras are type I, so $\A^{**}$ is a type I von Neumann algebra.  Write $\A^{**} \simeq \sum_{\kappa \in K}^\oplus \B(\ell^2_\kappa) \bar{\otimes} \z_\kappa$, where the $\z_\kappa$ are abelian von Neumann algebras.  Let $\kappa_0 = \sup K$.  Now $\Dc(\A^{**})$ dominates both $\kappa_0$ and $\Dc(\z(\A^{**}))$; since at least one of these must be infinite, we get
\begin{equation} \label{E:first}
\Dc(\A^{**}) \geq \kappa_0 \cdot \Dc(\z(\A^{**})).
\end{equation}
Working from the other direction, compute
\begin{equation} \label{E:second}
\G(\A^{**}) = \max\{\log_\C(|K|), \sup \G(\B(\ell^2_\kappa) \bar{\otimes} \z_\kappa)\} \leq \kappa_0 \cdot (\kappa_0 \cdot \G(\z(\A^{**}))) = \kappa_0 \cdot \G(\z(\A^{**})),
\end{equation}
using Theorem \ref{T:dsum} for the first relation and infiniteness of $\kappa_0 \cdot \kappa_0 \cdot \G(\z(\A^{**}))$ for the last.  (According to \eqref{E:second}, it dominates $\G(\A^{**})$.)  Putting \eqref{E:first} and \eqref{E:second} together, \eqref{E:HN'} will follow if we show $\G(\z(\A^{**})) \leq \Dc(\z(\A^{**}))$.

Since $\A$ is continuous trace, $\z(\A^{**}) \simeq C_0(\hat{\A})^{**}$ (\cite[Theorem 6.3]{Pe2}).  This means that we only need to show \eqref{E:HN'} for abelian $\A$.  We write $\A = C_0(X)$ for some infinite locally compact Hausdorff space $X$.  For each unequal pair $x, y \in X$, there is a function $f_{x,y} \in C_0(X)$ with $f_{x,y}(x) \neq f_{x,y}(y)$.  By the Stone-Weierstrass theorem the family of $|X|^2 + 1 = |X|$ functions $\{f_{x,y}\} \cup\{1\}$ generates $C_0(X)$ as a $C^*$-algebra and thus $C_0(X)^{**}$ as a von Neumann algebra, making $\G(C_0(X)^{**}) \leq |X|$.  On the other hand, the points of $X$ give disjoint one-dimensional representations of $C_0(X)$, which correspond to one-dimensional summands in $C_0(X)^{**}$.  This entails $\Dc(C_0(X)^{**}) \geq |X|$ and completes the proof of the type I case.  (The subcase just established, that double duals of abelian $C^*$-algebras are maximally decomposable, improves \cite[Corollary 5.2]{HN} and its incorporation into \cite[Theorem 5.5]{HN}.)

\smallskip

Now suppose that $\A$ is generated as a $C^*$-algebra by $\leq \C$ elements.  This implies that $\G(\A^{**}) \leq \C$, and from (1) we may also assume that $\A$ is not type I.  By Sakai's nonseparable version of Glimm's theorem (conveniently formulated in \cite[Corollary 6.7.4]{Pe}), there is a $C^*$-subalgebra $\B \subseteq \A$ with ideal $\J \lhd \B$ such that $\B/\J$ is *-isomorphic to the CAR algebra $\mathbb{M}_{2^\infty}$.  Powers showed that $\mathbb{M}_{2^\infty}$ has a continuum of nonisomorphic factor representations (\cite[Section 4]{Pow}), each of which is then a summand of $\mathbb{M}_{2^\infty}^{**}$.  From all this we deduce
$$\A^{**} \supseteq \B^{**} \simeq \J^{**} \oplus (\B/\J)^{**} \supseteq (\B/\J)^{**} \simeq \mathbb{M}_{2^\infty}^{**} \qquad \Rightarrow \qquad \Dc(\A^{**}) \geq \Dc(\mathbb{M}_{2^\infty}^{**}) \geq \C. \qedhere.$$

\end{proof}

Proposition \ref{T:last} is easily extended to a stronger fact.  Let $I(\A)$ denote the largest type I ideal of a $C^*$-algebra $\A$ (\cite[Section IV.1.1.12]{B}).  Then $\A$ satisfies \eqref{E:HN} whenever $\G((\A / I(\A))^{**}) \leq \C$, by applying Theorem \ref{T:dsum} and Proposition \ref{T:last} to the decomposition $\A^{**} \simeq I(\A)^{**} \oplus (\A / I(\A))^{**}$.





\section{Cardinality of a von Neumann algebra}

One of the goals of this paper is to demonstrate that many cardinal invariants for von Neumann algebras can be expressed in terms of $\G$ and $\Dc$, mostly based on Theorem \ref{T:estimate}.  This is true for the cardinality of the predual, but in this section we show that it is \textit{not} true for the cardinality of the algebra itself.  Nonetheless the situation is not so bad: there is a simple formula that works unless the algebra is fantastically large.  (See the third condition in Theorem \ref{T:card}(2).  We mean large to an analyst, maybe not to a set theorist.)

We will go to the trouble of determining an exact cardinal bound for this phenomenon, so let us first review some notation and facts regarding cardinal arithmetic.  For a cardinal $\kappa$ and ordinal $\xi$, $\kappa^{+\xi}$ denotes the ``$\xi$th successor of $\kappa$," i.e., if $\kappa = \aleph_\alpha$, then $\kappa^{+\xi} = \aleph_{\alpha+\xi}$.  For infinite cardinals $\kappa$ and $\lambda$, the value of $\kappa^\lambda$ is determined by the following iterative scheme (\cite[Theorem 5.20]{J}):
\begin{itemize}
\item if there is $\mu < \kappa$ with $\mu^\lambda \geq \kappa$, then $\kappa^\lambda = \mu^\lambda$ (so in particular $\kappa^\lambda = 2^\lambda$ when $\kappa \leq 2^\lambda$);
\item otherwise $\text{cf}\;\kappa > \lambda \; \Rightarrow \; \kappa^\lambda = \kappa$ and $\text{cf}\;\kappa \leq \lambda \; \Rightarrow \; \kappa^\lambda = \kappa^{\text{cf}\;\kappa}$.
\end{itemize}
Here $\text{cf}\;\kappa$ is the \textit{cofinality} of $\kappa$, the least cardinality of a set of cardinals $< \kappa$ that sum to $\kappa$.  From K\"{o}nig's theorem we always have $\kappa^{\text{cf}\;\kappa} > \kappa$ (\cite[Corollary 5.14]{J}).

We thank Ilijas Farah for initially pointing out that part (1) of the next lemma is true.

\begin{lemma} \label{T:sing} ${}$
\begin{enumerate}
\item There exist cardinals $\kappa, \lambda$ such that $\kappa^\lambda > 2^\lambda \cdot \kappa^{\aleph_0}$.
\item If $\kappa^\lambda > 2^\lambda \cdot \kappa^{\aleph_0}$, then $\kappa \geq (2^{\aleph_1})^{+ \omega_1}$.
\end{enumerate}
\end{lemma}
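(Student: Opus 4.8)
The plan is to treat the two parts via the iterative scheme for $\kappa^\lambda$ recalled above, which shows that the only way to make $\kappa^\lambda$ exceed all the ``cheap'' bounds $2^\lambda$ and $\kappa^{\aleph_0}$ is to land in the genuinely singular case where $\operatorname{cf}\kappa \leq \lambda$ and the first clause of the scheme does not apply. For part (1), I would exhibit an explicit $(\kappa,\lambda)$. The natural candidate is to take $\lambda = \aleph_1$ and $\kappa$ a cardinal of cofinality $\aleph_1$ that is a strong limit past $2^{\aleph_1}$ in the relevant sense, so that no $\mu < \kappa$ satisfies $\mu^{\aleph_1} \geq \kappa$; then the scheme forces $\kappa^{\aleph_1} = \kappa^{\operatorname{cf}\kappa} > \kappa$ by K\"onig, while $\kappa^{\aleph_0} = \kappa$ (because $\operatorname{cf}\kappa = \aleph_1 > \aleph_0$, second clause) and $2^{\aleph_1} < \kappa$. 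This yields $\kappa^{\aleph_1} > \kappa = 2^{\aleph_1}\cdot\kappa^{\aleph_0}$. The concrete choice $\kappa = \beth_{\omega_1}$ (or any strong-limit cardinal of cofinality $\aleph_1$ exceeding $2^{\aleph_1}$) makes all three verifications routine applications of the scheme.

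For part (2), the plan is contrapositive/direct: assume $\kappa^\lambda > 2^\lambda\cdot\kappa^{\aleph_0}$ and extract lower bounds on $\kappa$. First, since $\kappa^\lambda > 2^\lambda$, the abbreviation ``$\kappa \leq 2^\lambda \Rightarrow \kappa^\lambda = 2^\lambda$'' in the first clause is violated, so $\kappa > 2^\lambda$; in particular $\lambda < \kappa$. Second, since $\kappa^\lambda > \kappa^{\aleph_0} \geq \kappa$, we have $\lambda > \aleph_0$, hence $\lambda \geq \aleph_1$ and therefore $2^\lambda \geq 2^{\aleph_1}$, which already gives $\kappa > 2^{\aleph_1}$. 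The real work is to boost $\kappa > 2^{\aleph_1}$ up to $\kappa \geq (2^{\aleph_1})^{+\omega_1}$, i.e.\ to rule out that $\kappa$ lies in the interval $(2^{\aleph_1}, (2^{\aleph_1})^{+\omega_1})$.

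The main obstacle is exactly this last step, and I expect to handle it by analyzing how the first clause of the scheme propagates. The key observation is that the hypothesis forces the first clause to fail for the pair $(\kappa,\lambda)$ — otherwise $\kappa^\lambda = \mu^\lambda$ for some $\mu < \kappa$, and one reduces to a smaller base while keeping the same exponent, eventually contradicting $\kappa^\lambda > 2^\lambda$. So $\mu^\lambda < \kappa$ for every $\mu < \kappa$, and we are in the second clause with $\operatorname{cf}\kappa \leq \lambda$ (if $\operatorname{cf}\kappa > \lambda$ then $\kappa^\lambda = \kappa \leq \kappa^{\aleph_0}$, contradicting the hypothesis). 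The plan is then to iterate the relation $\mu^\lambda < \kappa$ for all $\mu < \kappa$ together with $\mu^{\aleph_0} \leq \mu^\lambda$: starting from $\mu_0 = 2^{\aleph_1}$ and repeatedly passing to successors, each successor cardinal below $\kappa$ must itself raise to the $\lambda$ (hence to the $\aleph_0$) back under $\kappa$, which forces a chain of $\aleph_1$-many successors to fit strictly below $\kappa$. Counting these successors gives $\kappa \geq (2^{\aleph_1})^{+\omega_1}$; the delicate point, and where I expect to spend the most care, is verifying that the chain has length at least $\omega_1$ rather than stalling earlier — this is governed by $\lambda \geq \aleph_1$ feeding into the gap between $\mu^{\aleph_0}$ and $\mu^\lambda$, and by the failure of the first clause persisting at each stage below $\kappa$.
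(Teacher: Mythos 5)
Your part (1) is correct and is essentially a concrete instance of the paper's construction: the paper takes $\kappa = \sup_{\xi<\omega_1}\kappa_\xi$ for an $\omega_1$-chain with $\kappa_\eta > \kappa_\xi^{\aleph_0}$, and your $\beth_{\omega_1}$ (a strong limit of cofinality $\aleph_1$ above $2^{\aleph_1}$) satisfies exactly what is needed: $\kappa^{\aleph_0}=\kappa$, $\kappa^{\aleph_1}=\kappa^{\text{cf}\,\kappa}>\kappa$ by K\"onig, and $\kappa > 2^{\aleph_1}$. In part (2), your opening deductions ($\lambda \geq \aleph_1$ and $\kappa > 2^\lambda$) also match the paper. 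But your claim that the hypothesis \emph{forces} the first clause of the scheme to fail at $(\kappa,\lambda)$ is false as stated: the first clause can consistently apply (e.g., $\kappa = \mu^+$ for $\mu$ as in part (1), when $\mu^{\aleph_1} > \mu^+$), and the reduction you describe does not terminate in a contradiction with $\kappa^\lambda > 2^\lambda$; it terminates at a minimal base $\mu > 2^\lambda$ that still satisfies the hypothesis and at which the first clause fails. This is repairable as a ``without loss of generality'' (prove the bound for $\mu$ and use $\kappa \geq \mu$), but it is not what you wrote.

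The genuine gap is at the step you yourself flagged as delicate, and the mechanism you propose for it is the wrong one. Failure of the first clause together with $\mu^{\aleph_0} \leq \mu^\lambda < \kappa$ for all $\mu<\kappa$ only yields that $\kappa$ is a limit cardinal (it is singular, since $\text{cf}\,\kappa \leq \lambda < \kappa$), which gets your chain of successors to $(2^{\aleph_1})^{+\omega}$ and no further. It does not exclude $\kappa = (2^{\aleph_1})^{+\omega}$ itself: for that cardinal the first clause fails at every stage (by the scheme, $((2^{\aleph_1})^{+k})^{\aleph_1} = (2^{\aleph_1})^{+k}$ for all finite $k$), and it fails to be a counterexample only because $\kappa^\lambda = \kappa^{\text{cf}\,\kappa} = \kappa^{\aleph_0}$. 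So ``failure of the first clause persisting at each stage'' cannot drive the chain past countable limit ordinals. The missing one-line idea is a cofinality constraint: in the second clause $\kappa^\lambda = \kappa^{\text{cf}\,\kappa}$, so the hypothesis $\kappa^\lambda > \kappa^{\aleph_0}$ forces $\text{cf}\,\kappa \geq \aleph_1$; then at each countable limit $\delta$ the supremum $(2^{\aleph_1})^{+\delta}$ has cofinality $\aleph_0 \neq \text{cf}\,\kappa$, so the chain passes through it, and one concludes $\kappa \geq (2^{\aleph_1})^{+\omega_1}$ uniformly in $\lambda$. With that insertion your argument closes, and it is in fact somewhat cleaner than the paper's, which instead proves by induction on countable $\xi$ that $((2^{\aleph_1})^{+\xi})^{\aleph_1} = ((2^{\aleph_1})^{+\xi})^{\aleph_0}$ (ruling out cofinality $\aleph_1$ at a least counterexample) and then handles general $\lambda$ by an ordinal-arithmetic comparison showing $(2^\lambda)^{+\xi'} > (2^{\aleph_1})^{+\omega_1}$, where $\xi'$ is the least ordinal of cardinality $\lambda$. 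As submitted, however, part (2) is incomplete at precisely the decisive step.
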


\begin{proof}
Let $\{\kappa_\xi\}_{\xi < \omega_1}$ be a sequence of cardinals greater than $2^{\aleph_1}$ such that $\kappa_\eta > \kappa_\xi^{\omega_0}$ whenever $\eta > \xi$.  Now set $\kappa = \sup\{\kappa_\xi\}$ and $\lambda = \aleph_1$; from $\text{cf}\;\kappa = \aleph_1$ we compute $\kappa^{\aleph_1} > \kappa = \kappa^{\aleph_0} > 2^{\aleph_1}$.  This establishes (1).

We now turn to (2). From $\kappa^\lambda > \kappa^{\aleph_0}$ we have $\lambda \geq \aleph_1$, while from $\kappa^\lambda > 2^\lambda = (2^\lambda)^\lambda$ we conclude $\kappa > 2^\lambda$.  Taking $\lambda = \aleph_1$ for now, we are looking for the least $\kappa > 2^{\aleph_1}$ such that $\kappa^{\aleph_1} > \kappa^{\aleph_0}$.

Obviously $(2^{\aleph_1})^{\aleph_1} = (2^{\aleph_1})^{\aleph_0} = 2^{\aleph_1}$, and moreover $((2^{\aleph_1})^{+ k})^{\aleph_1} = ((2^{\aleph_1})^{+ k})^{\aleph_0} = (2^{\aleph_1})^{+ k}$ for every finite $k$.  Now $((2^{\aleph_1})^{+ \omega})^{\aleph_1}$ and $((2^{\aleph_1})^{+ \omega})^{\aleph_0}$ are larger than $(2^{\aleph_1})^{+ \omega}$, whose cofinality is $\aleph_0$, but they are still equal.  We may continue to argue by induction that $((2^{\aleph_1})^{+ \xi})^{\aleph_1}= ((2^{\aleph_1})^{+ \xi})^{\aleph_0}$ for any countable ordinal $\xi$.  For if $\xi$ were the lowest counterexample, the iterative scheme implies that $\text{cf}\;(2^{\aleph_1})^{+ \xi} = \aleph_1$, but this is impossible.  (If $\xi$ is a successor ordinal, $(2^{\aleph_1})^{+ \xi}$ is its own cofinality, and otherwise the cofinality is $\aleph_0$.)

This identifies $(2^{\aleph_1})^{+ \omega_1}$ as the smallest possibility for $\kappa$ when $\lambda = \aleph_1$.  The same argument for a larger $\lambda$ shows that $\kappa \geq (2^\lambda)^{+ \xi'}$, where $\xi'$ is the least ordinal of cardinality $\lambda$.  Writing $2^{\aleph_1} = \aleph_\alpha$ and $2^\lambda=\aleph_{\alpha'}$, we have $\alpha' \geq \alpha$ and $\xi' > \omega_1$ as ordinals.  This entails that $\alpha' + \xi' > \alpha + \omega_1$.  (Otherwise $\alpha'+\xi'$ would be isomorphic to an initial segment of $\alpha + \omega_1$; this would carry $\alpha'$ to an initial segment containing $\alpha$, but $\xi'$ cannot embed into the remainder even as a set, having cardinality larger than $\aleph_1$.)  We conclude that $(2^\lambda)^{+\xi'}$ is greater than $(2^{\aleph_1})^{+ \omega_1}$, so the latter is a universal strict lower bound for $\kappa$ when $\lambda > \aleph_1$.
\end{proof}

\begin{remark}
It is consistent with ZFC that $\kappa^\lambda > 2^\lambda \cdot \kappa^{\aleph_0}$ for $\kappa = (2^{\aleph_1})^{+ \omega_1}$ and $\lambda = \aleph_1$.  This follows, for instance, from the Generalized Continuum Hypothesis, or even just the Singular Cardinal Hypothesis (\cite[Theorem 5.22]{J}). 
\end{remark}

\begin{theorem} \label{T:card}
Let $\M$ be a von Neumann algebra.
\begin{enumerate}
\item We have
\begin{equation} \label{E:card}
|\M| \leq (\aleph_0 \cdot \G(\M))^{\aleph_0 \cdot \textnormal{dec}(\M)}.
\end{equation}
\item The inequality \eqref{E:card} is an equality whenever any of the following conditions hold:
\begin{itemize}
\item $\M$ is $\sigma$-finite;
\item $\M$ is a factor; or
\item $\G'(\M) < (2^{\aleph_1})^{+ \omega_1}$, i.e., $\M$ can be written as a direct sum of algebras each of which can be generated by fewer than $(2^{\aleph_1})^{+ \omega_1}$ elements.
\end{itemize}
\item In general the cardinality of $\M$ is not determined by $\G(\M)$ and $\Dc(\M)$.
\end{enumerate}
\end{theorem}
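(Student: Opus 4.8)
The plan is to establish a cardinality formula for $\sigma$-finite algebras, promote it to the general upper bound \eqref{E:card} by a matrix argument, and then identify exactly when the matching lower bound holds.

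First I would treat the $\sigma$-finite case, proving $|\N| = \chi_r(\N)^{\aleph_0}$ for infinite-dimensional $\sigma$-finite $\N$. For ``$\leq$'' embed $\N \hookrightarrow L^2(\N)$ via a faithful normal state and use $|L^2(\N)| = \chi_r(\N)^{\aleph_0}$. For ``$\geq$'', which holds for every $\N$, note that a norm-dense subset of $\N$ both generates $\N$ and separates any family of pairwise orthogonal projections (a $1$-separated set), so the norm density character satisfies $\D(\N) \geq \max\{\G(\N),\Dc(\N)\} = \chi_r(\N)$; since $|\N| = \D(\N)^{\aleph_0}$, this gives $|\N| \geq \chi_r(\N)^{\aleph_0} = |\N_*|$. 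For general $\M$ I would fix a maximal orthogonal family of $\sigma$-finite projections $\{p_\alpha\}_{\alpha < \Dc(\M)}$ summing to $1$. Then $x \mapsto (p_\alpha x p_\beta)_{\alpha,\beta}$ is injective, each corner $(p_\alpha+p_\beta)\M(p_\alpha+p_\beta)$ is $\sigma$-finite with $\chi_r \leq \aleph_0 \cdot \G(\M) =: g$ (the compression $x \mapsto pxp$ is a $\sigma$-strongly continuous surjection, so $s$-$\D$ cannot increase, and Proposition \ref{T:center}(3) applies), and each entry ranges over $\leq g^{\aleph_0}$ values. Counting matrices yields $|\M| \leq (g^{\aleph_0})^{\Dc(\M)} = g^{\aleph_0\cdot\Dc(\M)}$, which is \eqref{E:card}.

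For part (2) the basic tool is the universal lower bound $|\M| \geq \max\{2^{\Dc(\M)},\, \chi_r(\M)^{\aleph_0}\}$, the first term from subsets of an orthogonal family of projections and the second from the previous paragraph. This alone settles the $\sigma$-finite case (where $\Dc(\M) \leq \aleph_0$ makes the right side of \eqref{E:card} equal $\chi_r(\M)^{\aleph_0}$), and, via the identity $g^{\aleph_0\cdot\Dc} = 2^{\Dc}$ when $\Dc(\M) \geq g$, the case $\Dc(\M) \geq \G(\M)$. For a non-$\sigma$-finite factor I would use proper infiniteness to write $\M \simeq \N \bar{\otimes} \B(\ell^2_{\Dc(\M)})$ with $\N = e\M e$ a $\sigma$-finite corner; block-diagonal operators give $|\M| \geq |\N|^{\Dc(\M)} = (\chi_r(\N)^{\aleph_0})^{\Dc(\M)}$, and separating on whether $\G(\N)$ or $\Dc(\M)$ dominates recovers $g^{\aleph_0\cdot\Dc(\M)}$ with no cardinal-arithmetic hypothesis.

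The third condition of part (2) is the heart of the matter. Writing $\Theta := (2^{\aleph_1})^{+\omega_1}$, I would choose a decomposition $\M = \sum^\oplus_{i \in I} \M_i$ realizing $\G'(\M) = \sup_i \G(\M_i) < \Theta$, so that $\aleph_0\cdot\G(\M_i) < \Theta$ for every $i$. Additivity of $\Dc$ (Theorem \ref{T:dsum}) turns the block-diagonal bound into $|\M| \geq 2^{\Dc(\M)}\cdot \prod_i \chi_r(\M_i)^{\aleph_0}$. According as $g = \aleph_0\cdot\G(\M)$ is governed by $\log_\C|I|$ or by $\aleph_0\cdot\G'(\M)$: in the first case $g \leq \Dc(\M)$ and $g^{\aleph_0\cdot\Dc(\M)} = 2^{\Dc(\M)} \leq |\M|$; in the second case $g < \Theta$, so Lemma \ref{T:sing}(2) gives $g^{\Dc(\M)} = 2^{\Dc(\M)}\cdot g^{\aleph_0}$, and it remains to verify $\prod_i \chi_r(\M_i)^{\aleph_0} \geq g^{\aleph_0}$ — immediate if the supremum is attained, and otherwise obtained from $\prod_j \mu_j = (\sup_j \mu_j)^{\mathrm{cf}}$ applied to an increasing cofinal subfamily of the $\G(\M_i)$. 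With \eqref{E:card} this yields equality. For part (3) I would fix $\kappa,\lambda$ with $\kappa^\lambda > 2^\lambda\cdot\kappa^{\aleph_0}$ (Lemma \ref{T:sing}(1)) and compare $\M_1 = \B(\ell^2_\lambda) \bar{\otimes} L(\F_\kappa)$, a factor with $(\G,\Dc) = (\kappa,\lambda)$ and $|\M_1| = \kappa^\lambda$ (the diagonal operators already number $(\kappa^{\aleph_0})^\lambda$, using $|L(\F_\kappa)| = \kappa^{\aleph_0}$ from part (1) and Example \ref{T:LG}(1)), with $\M_2 = \B(\ell^2_\lambda) \oplus L(\F_\kappa)$, which has the same $\G$ and $\Dc$ by Theorem \ref{T:dsum} but cardinality $2^\lambda\cdot\kappa^{\aleph_0} < |\M_1|$. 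The main obstacle is precisely the singular-cardinal bookkeeping in the third condition: bounding $\prod_i \chi_r(\M_i)^{\aleph_0}$ below by $g^{\aleph_0}$ when $\G'(\M)$ is a non-attained singular supremum is what forces the threshold $\Theta$, and is exactly the phenomenon isolated in Lemma \ref{T:sing}.
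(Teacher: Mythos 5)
Your proposal is correct, but it takes a genuinely different route from the paper's. The paper computes $|\M|$ \emph{exactly}: in the $\sigma$-finite case it metrizes the strong topology on $\M_{\leq 1}$ by $\|x\|_\varphi$ and adapts Kruse's lemma to get $|\M_{\leq 1}| = (\aleph_0 \cdot \G(\M))^{\aleph_0}$, then writes an arbitrary $\M$ as $\sum^\oplus \B(\ell^2_{\mu_i}) \bar{\otimes} \N_i$ via dimension theory, evaluates $|\M| = \prod (\aleph_0 \cdot \G(\M_i))^{\aleph_0 \cdot \textnormal{dec}(\M_i)}$, and obtains part (1) and all of part (2) at once by analyzing when the passage from this product to $(\aleph_0 \cdot \sup \G(\M_i))^{\aleph_0 \cdot \sum \textnormal{dec}(\M_i)}$ is strict --- strictness forces $\kappa^\lambda > 2^\lambda \cdot \kappa^{\aleph_0}$ with $\kappa = \G'(\M)$, so Lemma \ref{T:sing}(2) applies. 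You instead prove the upper bound \eqref{E:card} directly by a matrix-corner count over a partition of unity into $\Dc(\M)$ $\sigma$-finite projections (using that $s$-$\D$ does not increase under compression, plus Proposition \ref{T:center}(3)), thereby avoiding the dimension-theoretic decomposition in part (1), and you match it against separately assembled lower bounds: $2^{\Dc(\M)}$ from sums over subsets of an orthogonal family, $\D(\M)^{\aleph_0} \geq \chi_r(\M)^{\aleph_0}$ from the norm density character via Kruse, and block diagonals in the factor and direct-sum cases; Lemma \ref{T:sing}(2) enters only contrapositively, as the cardinal identity $g^\lambda = 2^\lambda \cdot g^{\aleph_0}$ for $g < (2^{\aleph_1})^{+\omega_1}$. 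What each approach buys: the paper's yields the exact value of $|\M|$ as a product in every case, which is more information than the theorem asserts; yours isolates the clean extra fact that \eqref{E:card} is an equality whenever $\Dc(\M) \geq \aleph_0 \cdot \G(\M)$, and your factor case visibly requires no cardinal-arithmetic hypotheses. Two small remarks: your cofinality computation $\prod_j \nu_j = \mu^{\mathrm{cf}\,\mu}$ for a non-attained supremum is correct but can be bypassed, since the exponents already absorb $\aleph_0$, so that $\prod_i \chi_r(\M_i)^{\aleph_0} = \bigl(\prod_i \chi_r(\M_i)^{\aleph_0}\bigr)^{\aleph_0} \geq \bigl(\sup_i \G(\M_i)\bigr)^{\aleph_0}$ --- this is exactly the bracket trick in the paper's display; and your decomposition $\M \simeq \N \bar{\otimes} \B(\ell^2_{\Dc(\M)})$ for a non-$\sigma$-finite factor (which, as you implicitly use, is automatically properly infinite, a finite factor carrying a faithful normal tracial state) rests on the same structure theorem the paper cites, so that dependency is not actually eliminated, only relocated.
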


\begin{proof}
We start with the elementary observation that $|\M| = |\M_{\leq 1}|$.  One justification is as follows:
$$|\M_{\leq 1}| \leq |\M| = |\cup_{n \in \mathbb{N}} \M_{\leq n}| \leq \sum |\M_{\leq n}| = \aleph_0 \cdot |\M_{\leq 1}| = |\M_{\leq 1}|.$$
We use this freely in the rest of the proof.

First suppose that $\M$ is $\sigma$-finite.  Then there is a faithful normal state $\varphi$ on $\M$, and the strong topology on $\M_{\leq 1}$ is induced by the norm $\|x\|_\varphi = \varphi(x^*x)^{1/2}$ (\cite[Proposition III.2.2.7]{B}).  From Theorem \ref{T:estimate}(1) we know $s\text{-}\D(\M_{\leq 1}) = \aleph_0 \cdot \G(\M)$.  Arguing just as in \cite[Lemma 2]{Kr}, it follows that $|\M_{\leq 1}| = (\aleph_0 \cdot \G(\M))^{\aleph_0}$.   The $\sigma$-finiteness of $\M$ makes \eqref{E:card} an equality.

Next assume that $\M$ can be written as $\B(\ell^2_\mu) \bar{\otimes} \N$, where $\N$ is $\sigma$-finite and $\mu$ is either 1 or uncountable.  In particular any factor can be put in this form.  By Example \ref{T:typeI} and Corollary \ref{T:mult}(2),  $\G(\M) = \mu \cdot \G(\N)$; by Proposition \ref{T:mult2}, $\Dc(\M) = \mu \cdot \Dc(\N)$.  We also have that $|\M| = |\N|^\mu$: the relation $\leq$ follows from the fact that every element of $\M$ can be represented as a matrix of $\mu^2 = \mu$ entries in $\N$, and the relation $\geq$ follows from the fact that $\M_{\leq 1}$ contains the unit ball of the diagonal algebra $\ell^\infty_\mu(\N)$, which has cardinality $|\N|^\mu$.  Using the previous paragraph, we again obtain equality in \eqref{E:card}:
$$|\M| = |\N|^\mu = (\aleph_0 \cdot \G(\N))^{\aleph_0 \cdot \textnormal{dec}(\N) \cdot \mu} = (\aleph_0 \cdot \mu \cdot \G(\N))^{\aleph_0 \cdot \textnormal{dec}(\N) \cdot \mu} = (\aleph_0 \cdot \G(\M))^{\aleph_0 \cdot \textnormal{dec}(\M)}.$$
Here is the justification for changing the base expression in the third equality.  If $\mu \leq \aleph_0 \cdot \G(\N)$, the base has not changed.  Otherwise $\mu$ must be uncountable, so by $\sigma$-finiteness of $\N$ the exponent in these expressions is just $\mu$, while the bases are infinite cardinals $\leq \mu$.

Now it is a fact of dimension theory that any von Neumann algebra $\M$ can be written as a direct sum of algebras $\M_i = \B(\ell^2_{\mu_i}) \bar{\otimes} \N_i$, where the $\mu_i$ and $\N_i$ are as in the previous paragraph (see, e.g., \cite[Theorem 2.5]{S}).  For the left-hand side of \eqref{E:card} we get
\begin{align} \label{E:last1}
|\M| &= |\M_{\leq 1}| = \prod |(\M_i)_{\leq 1}| = \prod (\aleph_0 \cdot \G(\M_i))^{\aleph_0 \cdot \textnormal{dec}(\M_i)} \\ \notag &\leq \prod (\aleph_0 \cdot \sup \G(\M_i))^{\aleph_0 \cdot \textnormal{dec}(\M_i)} = (\aleph_0 \cdot \sup \G(\M_i))^{\aleph_0 \cdot \sum \textnormal{dec}(\M_i)}.
\end{align}
We use Theorem \ref{T:dsum} to compute the right-hand side of \eqref{E:card} as follows:
\begin{align} \label{E:last2}
(\aleph_0 \cdot \G(\M))^{\aleph_0 \cdot \textnormal{dec}(\M)} &= (\aleph_0 \cdot \max\{\sup \G(\M_i), \log_\C(|I|)\})^{\aleph_0 \cdot \sum \textnormal{dec}(\M_i)} \\ \notag &= (\aleph_0 \cdot \sup \G(\M_i))^{\aleph_0 \cdot \sum \textnormal{dec}(\M_i)}.
\end{align}
The second equality is justified similarly to the end of the previous paragraph.  If the base really changed, then $\log_\C(|I|)$ would have to be uncountable; but then both bases are infinite and dominated by the exponent (which is at least $|I|$), so the quantities are equal.  Since \eqref{E:last1} and \eqref{E:last2} end with equal expressions, we obtain part (1) of the theorem and also deduce that \eqref{E:card} is an equality whenever \eqref{E:last1} is an equality.

In fact \eqref{E:last1} can be a strict inequality, but then some big cardinals must be involved.  Set $\kappa = \G'(\M)$.  Write $\M$ as a direct sum of algebras each generated by $\leq \kappa$ elements, then decompose each summand as in the previous paragraph; thus $\M = \sum^\oplus \M_i$ as in the previous paragraph, with the additional condition that $\G(\M_i) \leq \kappa$ for all $i$.  Set $\lambda = \aleph_0 \cdot \sum \Dc(\M_i) = \aleph_0 \cdot \Dc(\M)$.  Assuming the inequality between the two terms in \eqref{E:last1} is strict, we estimate
\begin{align*}
\kappa^\lambda &= \prod (\aleph_0 \cdot \sup \G(\M_i))^{\aleph_0 \cdot \textnormal{dec}(\M_i)} > \prod (\aleph_0 \cdot \G(\M_i))^{\aleph_0 \cdot \textnormal{dec}(\M_i)} \\ &= \left[\prod (\aleph_0 \cdot \G(\M_i))^{\aleph_0 \cdot \textnormal{dec}(\M_i)}\right]^{\aleph_0} \cdot \left[\prod (\aleph_0 \cdot \G(\M_i))^{\aleph_0 \cdot \textnormal{dec}(\M_i)}\right] \geq \kappa^{\aleph_0} \cdot \aleph_0^\lambda = \kappa^{\aleph_0} \cdot 2^\lambda.
\end{align*}
From Lemma \ref{T:sing}(2) this implies $\kappa \geq (2^{\aleph_1})^{+ \omega_1}$, finishing part (2) of the theorem.

Finally, use Lemma \ref{T:sing}(1) to find $\kappa$ and $\lambda$ such that $\kappa^\lambda > 2^\lambda \cdot \kappa^{\aleph_0}$.  Let
$$\M_1 = \B(\ell^2_\lambda) \bar{\otimes} L(\F_\kappa), \qquad \M_2 = L(\F_\kappa) \oplus \ell^\infty_\lambda, \qquad \M_3 = L(\F_\kappa) \oplus (\B(\ell^2_\lambda) \bar{\otimes} L(\F_\lambda)).$$
From Example \ref{T:typeI}, Theorem \ref{T:dsum}, Example \ref{T:LG}(1), and Corollary \ref{T:mult}(2) we have $\G(\M_j) = \kappa$ and $\Dc(\M_j) = \lambda$.  But the first part of \eqref{E:last1} gives
$$|\M_1| = \kappa^\lambda > \kappa^{\aleph_0} \cdot \aleph_0^\lambda = |\M_2| = |\M_3|.$$
This establishes part (3) of the theorem.  We exhibited both $\M_2$ and $\M_3$ because they endow the second and third conditions in part (2) with some sharpness: equality in \eqref{E:card} follows from $\Dc(\M) \leq \aleph_0$ or $\Dc(\z(\M)) \leq 1$, but it does not follow from $\Dc(\M) \leq \aleph_1$, $\Dc'(\M) \leq \aleph_0$, or $\Dc(\z(\M)) \leq 2$.
\end{proof}

\smallskip

\textbf{Acknowledgments.}  The author thanks Ilijas Farah, Takeshi Katsura, Nik Weaver, and Stuart White for valuable input, and Kai-Uwe Bux for linguistic assistance with the article \cite{vN}.


\end{document}